\numberwithin{equation}{section}
\theoremstyle{plain}
\newtheorem{theorem}{Theorem}
\newtheorem{corollary}[theorem]{Corollary}
\newtheorem{lemma}[theorem]{Lemma}
\newtheorem{proposition}[theorem]{Proposition}
\newtheorem{remark}[theorem]{Remark}
\newtheorem{assumption}[theorem]{Assumption}
\newcommand{\red}[1]{\textcolor{black}{#1}}
\newcommand{\rev}[1]{\textcolor{blue}{#1}}
\begin{document}

\begin{center}
  \Large \bf Functional central limit theorems for stationary Hawkes processes and application to infinite--server queues
\end{center}

\author{}
\begin{center}
{Xuefeng
  Gao}\,\footnote{Corresponding Author. Department of Systems
    Engineering and Engineering Management, The Chinese University of Hong Kong, Shatin, N.T. Hong Kong;
    xfgao@se.cuhk.edu.hk},
  Lingjiong Zhu\,\footnote{Department of Mathematics, Florida State University, 1017 Academic Way, Tallahassee, FL-32306, United States of America; zhu@math.fsu.edu.
  }
\end{center}

\begin{center}
 \today
\end{center}

\begin{abstract}
A univariate Hawkes process is a simple point process that is self-exciting and has
clustering effect. The intensity of this point process is given by the sum of a baseline intensity and another term that depends on the entire past history of the point process. Hawkes process has wide applications in finance, neuroscience, social networks, criminology, seismology,
and many other fields. In this paper, we prove a functional central limit theorem for stationary Hawkes processes in the asymptotic regime where the baseline intensity is large. The limit is a non-Markovian Gaussian process with dependent increments. We use the resulting approximation to study an infinite-server queue with high-volume Hawkes traffic. We show that the queue length process can be approximated by a Gaussian process, for which we compute explicitly the covariance function and the steady-state distribution. We also extend our results to multivariate stationary Hawkes processes and establish limit theorems for infinite-server queues with multivariate Hawkes traffic.
\end{abstract}

\section{Introduction}
A univariate linear Hawkes process is a simple point process $N$ whose (stochastic) intensity $\lambda$ at time $t$ is given by
\begin{equation*}
\lambda({t}):= \mu + \int_{-\infty}^{t-}h(t-s)N(ds) = \mu + \sum_{\tau_i<t}h(t-\tau_i),  \label{dynamics}
\end{equation*}
where
$\tau_i$ are the occurrences of the points before time $t$, and $h(\cdot):[0, \infty) \rightarrow [0, \infty)$.
See Section~\ref{sec:hawkes} for accurate definitions, multivariate extensions, and related properties.
We use the notation $N({t}):=N(0,t]$ to denote the number of
points in the interval $(0,t]$. When $h \equiv 0$, the Hawkes process $N$ becomes a Poisson process with rate $\mu$.
In the literature, the parameter $\mu$ is called the \textit{baseline intensity}, and $h(\cdot)$ is called
the \textit{exciting function} or sometimes referred to as the \textit{kernel function}.

The linear Hawkes process was first introduced by A.G. Hawkes in 1971 \cite{Hawkes, Hawkes71II}. It exhibits both self--exciting (i.e., the occurrence
of an event increases the probabilities of future
events) and clustering properties. Hence it is very appealing in point process modeling and it has wide applications
in various domains, including neuroscience \cite{Johnson96, Pernice2012, Reynaud2013}, seismology \cite{Ogata1988}, genome analysis \cite{Gusto2005, Reynaud2010}, social network \cite{Blundell2012, Crane2008}, finance (see the recent survey paper \cite{Bacry2015} and the references therein) and others.

This paper focuses on stationary Hawkes processes and their applications in specific queueing systems. A Hawkes process is stationary if its distribution does not change under time shift. See Section~\ref{sec:hawkes} for accurate definitions. In this paper, we develop approximations of a stationary Hawkes process with a large baseline intensity $\mu$. Mathematically, under a mild assumption on the exciting function (Assumption~1), we
establish a functional central limit theorem (FCLT) for a sequence of univariate stationary Hawkes processes $N^{\mu}$ indexed by the baseline intensity $\mu$ which goes to infinity (see Theorem~\ref{thm:FCLT}). These Hawkes processes share a common fixed exciting function $h$. The limit process turns out to be a Gaussian process which is non--Markovian unless $h \equiv 0$. This limiting Gaussian process has stationary but dependent increments.

To illustrate the strength of the Gaussian approximation for the Hawkes processes, we study a specific queueing model with a stationary Hawkes traffic: an infinite--server queue with general service time distributions. The Hawkes process could be a potential traffic model especially for financial market data feeds\footnote{Market data feeds are typically composed of event messages that provide, in real time, the status of the market such as asset prices, reports of completed trades, and order activities. While some industry white paper \cite{Low} suggests that the market data traffic clearly exhibits clustering, we are not aware of academic studies or publicly available data on market data feeds.} for several reasons. First,
the Hawkes process
naturally extends the classical Poisson process. Second, stock order flows and the occurrence of financial market events are known to exhibit clustering (in time) and self--exciting features, e.g., trades trigger other trades \cite{Bacry2015, Bowsher2007, Cont2012, Fonseca2014, Hewlett2006}. The standard Poisson process
can not capture these features while the Hawkes process can adequately model such clustering and self--exciting behavior.
Third, the Hawkes process is a highly versatile and flexible model which can exhibit a broad range of correlation structure, depending on the specification of how past events affect the occurrence of current and future events (\cite{Bacry2015}). Finally, the Hawkes process is amenable to statistical inference (see, e.g., \cite{Bacry2015, Daley, Ozaki1979}).

Infinite--server queues are interesting in their own right since they naturally arise in the study of many applications such as electric power consumption and insurance mathematics \cite{Blanchet2014, Glynn2002}. In addition, as argued in \cite{whittIS2012}, infinite--server queues often serve as useful approximations
for multi--server queues which are classical models for large--scale service systems (e.g., server farms, call centers). In the financial context, such an infinite--server queue can serve as an approximate model for describing the market data feed sent from exchanges, processed by many parallel computer servers, and then delivered to consuming applications of end--users.

Since Hawkes processes are non--Markovian in general \red{and the inter-arrival times are correlated, it is challenging to analyze the performance of an infinite-server queue with Hawkes traffic and general service time distributions, either analytically or numerically.} Hence, we consider the regime that the baseline intensity $\mu$ of the Hawkes input is large. Such a regime could be relevant since the market data traffic is generated by many market participants and the market
data volumes are huge in practice (e.g., in the range of gigabytes per second). Relying on \cite{Kri1997}, we develop heavy--traffic approximations for the performance of such an infinite--server queue fed by a univariate stationary Hawkes process with a large baseline intensity $\mu$ (Proposition~\ref{thm:IS1}). The limiting queue length process is a Gaussian process.  We compute its covariance function as well as its steady--state distribution explicitly, both of which depend on the distribution of service times as well as the detailed form of the covariance density of the Hawkes traffic (Proposition~\ref{prop:covX} and Corollary~\ref{prop:X-infty}). In the special case of exponential service time distributions, the limiting queue length process is an Ornstein-Uhlenbeck (OU) process driven by a Gaussian process. This Gaussian--driven OU process is non--Markovian in general. We illustrate through examples and numerical experiments that the Gaussian approximation for the steady--state queue length is effective.


We also extend our functional central limit theorem to multivariate stationary Hawkes processes (Theorem~\ref{thm:multi-FCLT}) and study infinite--server queues with multivariate Hawkes traffic and general service time distributions. Such a model can be viewed as a multi--class queueing model with correlated and mutually--exciting arrivals. We show that the limiting queue length process is a multivariate Gaussian process (Proposition~\ref{thm:IS-multi-gen}).
When the service times of each class of customers are independent exponentials, this limiting queue length process becomes a multi--dimensional Gaussian--driven OU process (Proposition~\ref{thm:IS-mutual}).

To summarize, our paper is the
first one that studies the large baseline intensity asymptotics for stationary Hawkes processes. Unlike the existing limit theorems for Hawkes process in the
literature, our proof relies on the immigration-birth representation of the
linear Hawkes processes \cite{HawkesII}, and the delicate analysis of the moments of the stationary Hawkes process. Our paper is also the first to study queues with stationary Hawkes traffic. We obtain new explicit results for the performance of infinite-server queueing systems which allows us to better understand the impact of self--exciting and mutually--exciting Hawkes traffic on the system performance.

\bigskip
\textbf{Related Literature.}
Two streams of research that are closely related to our work are Hawkes processes and infinite-server queues.
We now explain the difference between our study and the existing literature in these two areas.

\textit{Asymptotics of Hawkes processes.}
Note that most of the existing literature on limit theorems
for Hawkes processes are for {\it{large--time asymptotics}}, where one scales both time and space. See \cite{Bacry, Bordenave, ZhuMDP} for large--time asymptotics of linear Hawkes processes,  \cite{Karabash, ZhuCIR} for large--time asymptotics for extensions of linear Hawkes processes,
\cite{Jaisson, JaissonII} for the nearly unstable case where $\Vert h\Vert_{L^{1}}\approx 1$, \cite{Zhang2015} for the generalized {Markovian} Hawkes processes (or affine point processes), and \cite{ZhuThesis} for large--time asymptotics of nonlinear Hawkes processes.


These large--time asymptotics are different from our large-$\mu$ asymptotics (no time-scaling is involved).
We will see later, see e.g. Theorem \ref{thm:FCLT},
that the time-space and intensity-space scalings are not equivalent.
For Poisson processes, these two scalings are equivalent and both lead to a Brownian limit.
For Hawkes processes, for the time-space scaling, we obtain the Brownian limit, see e.g. \cite{ZhuCLT}.
On the other hand, if we consider large baseline intensity $\mu$ and scale down the space, we get a non-Markovian Gaussian limit (Theorem \ref{thm:FCLT}). The primary reason is that the Hawkes process $N^{\mu}$ with a baseline intensity $\mu$, say $\mu$ is a positive integer,
can be expressed as partial sums of i.i.d. copies of a Hawkes process $N^{1}$ which has baseline intensity one (see Sections~2 and 3). Thus for the intensity-space scaling we consider,
the covariance structure of $N^1$ is still preserved in the limit, and
the covariance structure of $N^1$ does not coincide with that of a Brownian motion
since $N^1$ has dependent time increments, which leads to the non-Brownian Gaussian limit.

Other than the large--time asymptotics, limit theorems for non-stationary Markovian Hawkes processes with a large initial intensity have been established in our recent studies \cite{GZ, GZ2}.
Large--dimension asymptotics have been studied in \cite{Delattre,Chevallier,DF}, in which the authors studied the asymptotics
for the multivariate Hawkes process and its extensions
where the number of dimension goes to infinity, and obtained a mean--field limit.

\textit{Infinite-server queues.}
In the setting of infinite--server queues, our work complements the stream of research on heavy--traffic approximations of such queues, see, e.g., \cite{Eick93, Iglehart1965, Jamol2016, Pang2015, Pang2007, Pang2010, Reed2015, whitt2002} and the references therein. In these studies, the heavy--traffic limit of the arrival process is typically a Brownian motion or a deterministic time--changed Brownian motion. With Hawkes traffic, we obtain a non--Markovian limit but the Gaussian structure still allows us to obtain elegant formulas for transient and steady--state performance measures. From the traffic modeling perspective, we also mention that certain Poisson cluster processes have been used to model tele--traffic arrivals (see, e.g., \cite{Fasen2010, Fay2006, Hohn2003}).
The linear Hawkes processes which can be seen as Poisson cluster processes (see e.g. \cite{Bacry2015, Daley}) are not covered by these studies.

\bigskip
\textbf{Organization of this paper.}
The rest of the paper is organized as follows. In Section~\ref{sec:hawkes},
we formally introduce stationary linear Hawkes processes and review some of their properties.
In Section~\ref{sec:2}, we state the main result on the functional central limit theorem for univariate stationary Hawkes processes with large baseline intensity $\mu$ and describe the properties of the limiting Gaussian process.
In Section~\ref{sec:inf-server}, we develop heavy--traffic approximations for infinite--server queues with univariate Hawkes traffic. We also discuss in detail the special case when service times are exponentially distributed. In Section~\ref{sec:multi-hawkes}, we extend our results to multivariate stationary Hawkes processes and study infinite--server queues with multivariate Hawkes traffic.
The proofs of all the results are collected in the Appendix.

\section{Introduction to Stationary Hawkes processes} \label{sec:hawkes}
In this section, we formally introduce stationary linear Hawkes processes and review some of their properties.

\subsection{Definition and stationarity condition}

Let $N$ be a simple point process on $\mathbb{R}$, that is,
a family $\{N(C)\}_{C\in\mathcal{B}(\mathbb{R})}$ of random variables
with values in $\{0, 1, 2, \ldots, \}\cup\{\infty\}$ indexed
by the Borel $\sigma$-algebra $\mathcal{B}(\mathbb{R})$ of the real line $\mathbb{R}$,
where $N(C)=\sum_{n\in\mathbb{Z}}1_{C}(T_{n})$ and $(T_{n})_{n\in\mathbb{Z}}$
is a sequence of extended real-valued random variables so that almost surely $T_{0}\leq 0<T_{1}$, $T_{n}<T_{n+1}$
on $\{T_{n}<\infty\}\cap\{T_{n+1}>-\infty\}$ for every $n\in\mathbb{Z}$.
Let $\mathcal{F}_{t}=\sigma(N(C),C\in\mathcal{B}(\mathbb{R}),C\subset(-\infty,t]))$.
The process $\lambda(t)$ is called the $\mathcal{F}_{t}$-intensity of $N$ if for
all intervals $(a,b]$, we have
\begin{equation}\label{martingality}
\mathbb{E}[N((a,b])|\mathcal{F}_{a}]
=\mathbb{E}\left[\int_{a}^{b}\lambda(s)ds\Big|\mathcal{F}_{a}\right],
\qquad
\text{a.s.}
\end{equation}

The univariate linear Hawkes process with baseline intensity $\mu>0$ and exciting function $h:\mathbb{R}_{+}\rightarrow\mathbb{R}_{+}$
is a simple point process $N$ admitting the $\mathcal{F}_{t}$-intensity
\begin{equation}\label{intensity}
\lambda(t)=\mu+\int_{-\infty}^{t-}h(t-s)N(ds).
\end{equation}
Due to \eqref{intensity}, the univariate Hawkes process is sometimes also called the self--exciting point process in the literature.

A commonly used nontrivial example of the exciting function $h$ is an exponential function, i.e.,
$h(t)= \alpha e^{-\beta t}$ for $t \ge 0$, where $\alpha, \beta >0$. In this special case, the process $(\lambda(t), N(t))$
is Markovian, and the intensity process $\lambda(t)$ itself is also Markovian, see e.g. \cite{Errais}.
The power law function $h(t)=\frac{1}{(1+\delta t)^{\gamma}}$, where $\delta,\gamma>0$ is also
a popular choice for the exciting function in the literature, see e.g. \cite{Bacry2015}.

The multivariate Hawkes process extends the univariate Hawkes process to $k \ge 1$ dimensions as follows.
Let $\mathbb{N}:=(\mathbb{N}^{1} ,\ldots, \mathbb{N}^{k})$,
where $\mathbb{N}^{i}$ are simple point processes on $\mathbb{R}$ with no common points, and
for each $1\leq i\leq k$, $\mathbb{N}^{i}$ has the intensity:
\begin{equation}\label{TakeExpectation}
\lambda^{i}(t)=\mu_{i}+\sum_{j=1}^{k}\int_{-\infty}^{t-}h_{ij}(t-s) \mathbb{N}^{j}(ds),
\end{equation}
where $\mu_{i}>0$ and $h_{ij}(\cdot):\mathbb{R}_{+}\rightarrow\mathbb{R}_{+}$ for $1\leq i, j\leq k$. Due to \eqref{TakeExpectation}, the multivariate Hawkes process is sometimes also called the mutually--exciting point process in the literature.

To facilitate the presentation, we summarize below the key properties of the linear stationary Hawkes processes that will be used in the paper. Write $\Vert f\Vert_{L^{1}} = \int_{0}^{\infty} f(t)dt$
for a function $f:[0,\infty)\rightarrow[0,\infty)$.
\begin{enumerate}
\item [(a)]
(Stationarity).
For a simple point process $N$, stationarity of $N$ means its distribution does not change under time shift. More precisely, $N$ is stationary if
 the process $\theta_{t}N$ has the same distribution as the process $N$ for any $t$, where
$\theta_{t}$ is a shift operator defined as $\theta_{t}N(C)=N(t+C)$ for every $C\in\mathcal{B}(\mathbb{R})$.
This directly implies that a stationary Hawkes process $N$ has stationary increments, and
the intensity process $\lambda(\cdot)$ is a stationary process where the distribution of $\lambda(t)$ does not depend on $t.$
Similarly, we say a multivariate point process $\mathbb{N}=(\mathbb{N}^{1} ,\ldots, \mathbb{N}^{k})$ is stationary, if
$(\theta_{t} \mathbb{N}^{1},\ldots,\theta_{t} \mathbb{N}^{k})$ has the same distribution as $(\mathbb{N}^{1} ,\ldots, \mathbb{N}^{k})$ for any $t$.

Under the assumption $\Vert h\Vert_{L^{1}}<1$, there is a unique stationary
version of the Hawkes process $N$ with the intensity \eqref{intensity}, see e.g. \cite{Bremaud}.
More generally, under the assumption that the spectral radius of the $k \times k$ matrix
$\mathbb{H}:= (\Vert h_{ij}\Vert_{L^{1}})_{1\leq i,j\leq k}$ is strictly less than $1$,
there is a unique stationary version of the multivariate Hawkes process $\mathbb{N}$
with the intensity \eqref{TakeExpectation}, see e.g. \cite{Bremaud}.

\item [(b)]
(Martingality). By the definition of the intensity in \eqref{martingality}, we have
for any simple point process $N$ with the intensity $\lambda$,
$N(t)-\int_{0}^{t}\lambda(s)ds$ is a martingale.
Moreover, its predictable quadratic variation is given by $\int_{0}^{t}\lambda(s)ds$
so that $\left(N(t)-\int_{0}^{t}\lambda(s)ds\right)^{2}-\int_{0}^{t}\lambda(s)ds$ is also
a martingale. We will apply this martingale property to univariate stationary Hawkes processes and the marginal processes of multivariate stationary Hawkes processes in the proofs of Theorem~\ref{thm:FCLT} and \ref{thm:multi-FCLT}.

\item [(c)]
(First-order mean). For stationary $k-$variate Hawkes processes,
by taking expectations on both hand sides of \eqref{TakeExpectation}
and by the martingale property \eqref{martingality}, we have for each $t,$
\begin{equation*}
\bar{\lambda}_{i}:=\mathbb{E}[\lambda^{i}(t)]
=\mu_{i}+\sum_{j=1}^{k}\int_{-\infty}^{t-}h_{ij}(t-s)\bar{\lambda}_{j}ds,
\end{equation*}
which implies that
\begin{equation}\label{BarLambdaDefn}
\bar{\lambda}=(\mathbb{I}-\mathbb{H})^{-1}\mu,
\end{equation}
where $\bar{\lambda}=(\bar{\lambda}_{i})_{1\leq i\leq k}$, $\mu=(\mu_{i})_{1\leq i\leq k}$ and $\mathbb{I}$ is the identity matrix.

\item [(d)]
(Covariance density and variance function). For a stationary $k-$variate Hawkes process $(\mathbb{N}^{1} ,\ldots, \mathbb{N}^{k})$,
the covariance density matrix $\Phi(\tau)=(\Phi_{ij}(\tau))_{1\leq i,j\leq k}$, where $\Phi_{ij}(\tau) := \mathbb{E} [d\mathbb{N}^i(t+\tau) d\mathbb{N}^j(t)]/(dt)^2 - \bar \lambda_i \bar \lambda_j$ which does not depend on $t$, is given as follows,
see e.g. \cite{Hawkes,Hawkes71II}.
For $\tau\geq 0$,
\begin{equation} \label{eq:cov-density}
\Phi(\tau)=h(\tau)\text{diag}(\bar{\lambda})+\int_{-\infty}^{\tau}h(\tau-v)\Phi(v)dv,
\end{equation}
and $\Phi_{ij}(-\tau)=\Phi_{ji}(\tau)$ for every $\tau>0$ and $1\leq i,j\leq k$,
where $\bar{\lambda}$ is defined in \eqref{BarLambdaDefn}.
Here $\text{diag}(\bar{\lambda})$ is the diagonal matrix with entries $\bar{\lambda}_{i}$'s on the diagonal,
and with slight abuse of notations, $h(t)=(h_{ij}(t))_{1\leq i,j\leq k}$.
The variance function for the stationary $k-$variate Hawkes process, $\mathbb{K}(t)=(K_{ij}(t))_{1\leq i,j\leq k}
:=\text{Var}(\mathbb{N}(t))=(\text{Cov}(\mathbb{N}^{i}(t),\mathbb{N}^{j}(t)))_{1\leq i,j\leq k}$ is given by
\begin{equation}\label{eq:var-function}
\mathbb{K}(t):=\text{diag}(\bar{\lambda})t+2\int_{0}^{t}\int_{0}^{t_{2}}\Phi(t_{2}-t_{1})dt_{1}dt_{2}.
\end{equation}

\item [(e)]
(Association).
Intuitively, since the Hawkes process has the self- and mutually-exciting properties,
there are positive correlations between counts across time intervals. To make this statement rigorous, 
we will use the notion of \emph{association} from probability theory, see e.g. \cite{Evans}. 
Let $\mathcal{X}$ and $\mathcal{Y}$ be complete and separable metric spaces,
with closed orders $\leq_{\mathcal{X}}$ and $\leq_{\mathcal{Y}}$. A map $f:\mathcal{X}\rightarrow\mathcal{Y}$
is non-decreasing if $x_{1}\leq_{\mathcal{X}}x_{2}$ implies $f(x_{1})\leq_{\mathcal{Y}}f(x_{2})$.
An $\mathcal{X}$-valued random variable $X$ is \emph{associated} if for each
pair of bounded, Borel measurable, non-decreasing functions $f,g:\mathcal{X}\rightarrow\mathbb{R}$,
we have $\text{Cov}(f(X),g(X))\geq 0$. 
Let $\mathcal{S}$ be a locally compact, separable, metric space
and denote by $M(\mathcal{S})$ the space of Radon measures on $\mathcal{S}$
equipped with the vague topology with a partial ordering which is closed
by declaring that $\mu\leq\nu$ if $\mu(B)\leq\nu(B)$ for all Borel sets $B$.
A random measure is an $M(\mathcal{S})$-valued random variable.
The linear $k-$variate Hawkes process $\mathbb{N}=(\mathbb{N}^{1},\ldots,\mathbb{N}^{k})$
is equivalent to a marked linear Hawkes process $\mathbb{N}^{\dagger}$,
which is a random measure defined
on the space $\mathcal{S}=\mathbb{R}\times\{1,2,\ldots,k\}$, 
via $\mathbb{N}^{\dagger}(C,i)=\mathbb{N}^{i}(C)$, for any Borel sets $C$ of $\mathbb{R}$
and $i\in\{1,2,\ldots,k\}$. The random measure $\mathbb{N}^{\dagger}$
is infinitely divisible since the $k-$variate linear Hawkes process $\mathbb{N}$ is a special case of the Poisson cluster process (see e.g. \cite{Daley, JHR}), which is infinitely divisible.
Theorem 1.1. in \cite{Evans} (which first appears in \cite{BW}) 
says any infinitely divisible random measure on $\mathcal{S}$ is associated.
The association property of Hawkes processes implies that
the covariance density \eqref{eq:cov-density} is non-negative and it will also
be used to show the finiteness of the moment generating function of
the stationary Hawkes process in the proof of Theorem~\ref{thm:multi-FCLT}.
\end{enumerate}

Throughout the paper, we will always assume that we are working with the stationary
version of a Hawkes process. More specifically, we will
make the following assumption on the exciting function of $k-$dimensional Hawkes processes which guarantees the existence of the stationary version. This assumption is satisfied in most applications of Hawkes processes, see e.g. \cite{Bacry2015, Hawkes, ZhuThesis} and the references therein.

\begin{assumption}\label{assump1}
For all $1\leq i,j\leq k$, the exciting function $h_{ij}$ is non-negative, locally bounded, and Riemann integrable.
In addition, the spectral radius of the $k \times k$ matrix
$\mathbb{H}:= (\Vert h_{ij}\Vert_{L^{1}})_{1\leq i,j\leq k}$ is strictly less than $1$.
\end{assumption}

\subsection{Immigration--birth representation}\label{ImmigrationSection}

In this section, we review the well--known immigration birth representation of linear Hawkes processes (see, e.g., \cite{HawkesII, JHR}) which is the key to the proof of our results.

For the univariate stationary Hawkes process with intensity dynamics \eqref{intensity},
we assume that immigrants arrive according to a homogeneous
Poisson process with constant rate $\mu$ on the real line $\mathbb{R}$.
Each immigrant would produce children and the number
of children has a Poisson distribution with mean $\Vert h\Vert_{L^{1}}$. Conditional on the number of
the children of an immigrant, the children are born independently, and each child is born
at a time with a probability density function $\frac{h(t)}{\Vert h\Vert_{L^{1}}}$.
In other words, children are born according to an inhomogeneous Poisson process
with intensity $h(\cdot)$. Each child would produce children
according to the same laws independent of other children. All the immigrants produce children independently.
The number of points of a linear Hawkes process on a time interval $(0,t]$ equals
the total number of immigrants and the descendants on the interval $(0,t]$.

Note that the immigration--birth representation holds similarly for the multivariate Hawkes process, see e.g. \cite{JHR}.
For a $k$--variate Hawkes process $(\mathbb{N}^{1},\ldots,\mathbb{N}^{k})$
with the intensity \eqref{TakeExpectation} for $\mathbb{N}^{i}$, where $1\leq i\leq k$,
we consider immigrants of $k$ types, and the type-$i$ immigrants arrive
according to a homogeneous Poisson process with intensity $\mu_{i}$,
and each type-$i$ immigrant produce children of type $j$ according to
an inhomogeneous Poisson process with intensity $h_{ji}(\cdot)$.
Each child of type $i$ would produce children of different types according
to the same laws independent of other children.
All the immigrants produce children independently.
The number of points $\mathbb{N}^{i}$ on a time interval $(0,t]$
equals to the total number of immigrants and the  descendants of type $i$ on the interval $(0,t]$.

Also note that the immigration--birth representation does not require the stationarity of the Hawkes process,
or the monotonicity of the exciting function, see e.g. \cite{ZhuMDP}.

\section{FCLT for univariate stationary Hawkes processes} \label{sec:2}
In this section we develop approximations for a \textit{univariate} stationary Hawkes process with a large baseline intensity $\mu$.

Consider a univariate stationary Hawkes process $N^{\mu}$ with stochastic intensity in \eqref{intensity}. We write $N^{\mu}$
to emphasize that the baseline intensity of this Hawkes process is $\mu$. Our goal is to
establish a functional central limit theorem for a sequence of stationary Hawkes processes $N^{\mu}$ in the asymptotic regime $\mu \rightarrow \infty$.
Note that the exciting function is fixed, i.e., this sequence of Hawkes processes shares a common exciting function $h$ with $\Vert h\Vert_{L^{1}} <1$.

To facilitate the presentation, let us define
\begin{equation} \label{eq:var}
K(t):=\frac{t}{1-\Vert h\Vert_{L^{1}}}
+2\int_0^t \int_0^{t_2}\phi(t_{2}-t_{1})dt_{1}dt_{2},
\end{equation}
where $\phi:[0, \infty)\rightarrow [0, \infty)$ satisfies the integral equation:
\begin{equation} \label{eq:phi}
\phi(t)=\frac{h(t)}{1-\Vert h\Vert_{L^{1}}}+\int_{0}^{\infty}h(t+v)\phi(v)dv
+\int_{0}^{t}h(t-v)\phi(v)dv,
\end{equation}
and $\phi(-t) = \phi(t)$ for $t>0$.
The function $\phi$ and $K$ are just the covariance density and variance functions
for the univariate stationary Hawkes process with baseline intensity $1$, respectively. See Equations~\eqref{eq:cov-density} and \eqref{eq:var-function}.
Note that the covariance density $\phi$ is non-negative
since the linear Hawkes process is associated.
When $h\equiv 0$, the linear Hawkes process reduces to the Poisson process with independent increments
and thus $\phi\equiv 0$. On the other hand, when $\phi\equiv 0$, from \eqref{eq:phi},
it is clear that $h\equiv 0$. Hence, $\phi\equiv 0$ if and only if $h\equiv 0$.

We now present a result on the functional central limit theorem for such univariate stationary Hawkes processes. Write $(D([0,\infty),\mathbb{R}),J_{1})$ as the space of c\`{a}dl\`{a}g processes on $[0, \infty)$ that are equipped with Skorohod $J_1$ topology (see, e.g., Billingsley \cite{Billingsley}), and write $``\Rightarrow"$ for convergence in distribution. Recall from \eqref{BarLambdaDefn} that $\bar \lambda = \frac{\mu}{1 - \Vert h\Vert_{L^{1}}}.$
\begin{theorem} \label{thm:FCLT}
Under Assumption~\ref{assump1}, we have as $\mu\rightarrow\infty$,
\begin{equation*}
\frac{N^{\mu}(t)- \bar \lambda t}{\sqrt{\mu}}\Rightarrow G(t),
\end{equation*}
in $(D([0,\infty),\mathbb{R}),J_{1})$, where $G$ is a mean-zero almost surely continuous Gaussian process
with the covariance function, for $t \ge s$,
\begin{equation}\label{eq:cov-G}
\mbox{Cov}(G(t),G(s))=\int_{s}^{t}\int_{0}^{s}\phi(u-v)dvdu+K(s).
\end{equation}
\end{theorem}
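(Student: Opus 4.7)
The plan is to exploit the immigration-birth representation of Section~\ref{ImmigrationSection} to realize $N^{\mu}$ as a sum of independent copies of the stationary Hawkes process $N^{1}$ with baseline intensity $1$, and then apply a Donsker-type invariance principle. Since the immigrant Poisson stream of rate $\mu$ decomposes as a superposition of $\lfloor\mu\rfloor$ independent Poisson streams of rate $1$ plus one residual Poisson stream of rate $\mu-\lfloor\mu\rfloor\in[0,1)$, and the clusters spawned by different sub-streams are independent, we obtain the identity in law
$$N^{\mu}\;\stackrel{d}{=}\;\sum_{i=1}^{\lfloor\mu\rfloor}\tilde N_{i}+R^{\mu},$$
where $\tilde N_{1},\tilde N_{2},\ldots$ are i.i.d.\ copies of $N^{1}$ and $R^{\mu}$ is an independent stationary Hawkes process with baseline intensity in $[0,1)$. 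Writing $\bar\lambda_{1}=1/(1-\Vert h\Vert_{L^{1}})$, the mean $\mathbb{E}[R^{\mu}(t)]\le\bar\lambda_{1}t$ is bounded uniformly in $\mu$ on compacts, so $R^{\mu}(t)/\sqrt{\mu}$ vanishes in probability uniformly on each $[0,T]$.

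For the finite-dimensional convergence, property~(d) of Section~\ref{sec:hawkes} with $k=1$ supplies the finiteness of $\mathrm{Var}(N^{1}(t))=K(t)$, so the i.i.d.\ vectors $\bigl(\tilde N_{i}(t_{j})-\bar\lambda_{1}t_{j}\bigr)_{j=1}^{n}$ have a finite covariance matrix. The multivariate CLT, combined with Slutsky's theorem to absorb $R^{\mu}/\sqrt{\mu}$, then yields joint convergence of $(N^{\mu}(t_{j})-\bar\lambda t_{j})/\sqrt{\mu}$ to a centered Gaussian vector with the covariance of $(N^{1}(t_{j})-\bar\lambda_{1}t_{j})_{j}$. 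To match the formula \eqref{eq:cov-G}, for $s\le t$ I would decompose
$$\mathrm{Cov}(N^{1}(t),N^{1}(s))=\mathrm{Var}(N^{1}(s))+\mathrm{Cov}(N^{1}(t)-N^{1}(s),N^{1}(s)),$$
identify the first summand with $K(s)$ via \eqref{eq:var}, and evaluate the second summand using the covariance density \eqref{eq:cov-density} and stationarity to obtain $\int_{s}^{t}\!\int_{0}^{s}\phi(u-v)\,dv\,du$.

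For tightness in $(D([0,\infty),\mathbb{R}),J_{1})$ I would work on each compact $[0,T]$. The i.i.d.\ decomposition, the stationarity of increments, and the elementary expansion of the fourth moment of a centered i.i.d.\ sum yield
$$\mathbb{E}\!\left[\left(\frac{N^{\mu}(t)-N^{\mu}(s)-\bar\lambda(t-s)}{\sqrt{\mu}}\right)^{\!4}\right]\;\le\;\frac{\mathbb{E}\bigl[(N^{1}(t-s)-\bar\lambda_{1}(t-s))^{4}\bigr]}{\mu}+3\,K(t-s)^{2},$$
from which tightness follows via a standard invariance-principle argument for i.i.d.\ sums of c\`adl\`ag processes (combined, if preferred, with Aldous' criterion applied to the martingale part $N^{\mu}(\cdot)-\int_{0}^{\cdot}\lambda^{\mu}(s)\,ds$, whose jumps are of size one and therefore, after the $1/\sqrt{\mu}$ scaling, vanish in the limit). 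Since the Gaussian limit is almost surely continuous, this upgrades the finite-dimensional convergence to convergence in the $J_{1}$ topology. The main obstacle I anticipate is the \emph{delicate moment analysis} of $N^{1}$: one must control $\mathbb{E}[(N^{1}(u)-\bar\lambda_{1}u)^{p}]$ on short intervals for $p\in\{2,4\}$, which is precisely where the immigration-birth cluster representation earns its keep, as these moments are bounded by summing over Poisson clusters whose sizes are dominated by subcritical branching processes, the subcriticality being guaranteed by $\Vert h\Vert_{L^{1}}<1$.
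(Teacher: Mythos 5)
Your overall strategy coincides with the paper's: decompose $N^{\mu}$ via the immigration--birth representation into $\lfloor\mu\rfloor$ i.i.d.\ copies of the unit-baseline stationary process plus an asymptotically negligible residual, get finite-dimensional convergence from the classical CLT, and compute the covariance exactly as you describe. However, your tightness step has a genuine gap. The bound
$\mathbb{E}\bigl[\bigl((N^{\mu}(t)-N^{\mu}(s)-\bar\lambda(t-s))/\sqrt{\mu}\bigr)^{4}\bigr]
\le \mu^{-1}\,\mathbb{E}\bigl[(N^{1}(t-s)-\bar\lambda_{1}(t-s))^{4}\bigr]+3K(t-s)^{2}$
is correct as an identity-plus-estimate, but it does not feed into any standard criterion: for a point process the single-increment fourth moment $\mathbb{E}[(N^{1}(\delta)-\bar\lambda_{1}\delta)^{4}]$ is of order $\delta$ (not $\delta^{2}$) as $\delta\downarrow 0$, because one jump already contributes a unit. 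So your right-hand side is of order $\delta/\mu+\delta^{2}$, and for each fixed $\mu$ the exponent in $\delta$ is $1$, which fails the Kolmogorov--Chentsov-type requirement of an exponent strictly greater than $1$; the $1/\mu$ prefactor does not rescue this in any off-the-shelf tightness theorem. The parenthetical fallback via Aldous' criterion applied to the martingale part is also not a complete route, since the compensator fluctuation $\mu^{-1/2}\bigl(\int_{0}^{t}\lambda^{\mu}(s)\,ds-\bar\lambda t\bigr)$ is itself a nontrivial process whose tightness would then need a separate argument.

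The paper's fix, which is the essential extra idea you are missing, is to control the product of squared increments over \emph{adjacent disjoint} intervals: it invokes Hahn's theorem, whose second hypothesis requires
$\mathbb{E}\bigl[(\tilde N_{1}(u)-\tilde N_{1}(t))^{2}(\tilde N_{1}(t)-\tilde N_{1}(s))^{2}\bigr]\le (f(u)-f(s))^{\beta}$ with $\beta>1$,
and verifies $\mathbb{E}[(N_{1}(t,u])^{2}(N_{1}(s,t])^{2}]\le c\,(u-s)^{2}$. The quadratic order here is attainable precisely because the two intervals are disjoint (a single point cannot lie in both), and when you expand the corresponding product moment of the i.i.d.\ sum, the diagonal term carries this cross moment divided by $\mu$ with the correct exponent $2$ in $u-s$. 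Establishing that cross-moment bound is where the real work lies: the paper does it through the martingale property of $N_{1}-\int\lambda_{1}$, the Burkholder--Davis--Gundy inequality, and the finiteness of $\mathbb{E}[(\lambda_{1}(0))^{4}]$ for the stationary intensity (your proposed cluster-size domination could plausibly substitute for this last ingredient, but you would still need to organize the estimate around disjoint increments rather than a single-increment fourth moment).
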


The proof of this result is given in Appendix~\ref{sec:proof1}.

We now briefly explain the intuition behind this result.
Without loss of generality, we assume $\mu$ takes integer values. By the immigration--birth representation of Hawkes processes, one can deduce that for a stationary univariate Hawkes process $N^{\mu}$ with a baseline intensity $\mu$ and an exciting function $h$, we can decompose it as the sum of $\mu$ i.i.d stationary Hawkes processes, each having a baseline intensity one and an exciting function $h$. Then one expecte by central limit theorem type of arguments that $N^{\mu}$ will be asymptotically Gaussian when we send $\mu$ to infinity.

We next discuss the covariance function of $G$ in \eqref{eq:cov-G}. 
In general, the covariance function of $G$ in \eqref{eq:cov-G} is semi-explicit and we can compute it by first numerically solving $\phi$ via the integral equation \eqref{eq:phi}.
In the special case when $h(t)= \alpha e^{-\beta t}$ where $\alpha< \beta$, the covariance function of $G$ is explicit. To see this, we first deduce from \eqref{eq:phi} that
\begin{equation*}
\phi(t)=\frac{\alpha e^{-\beta t}}{1- \frac{\alpha}{\beta}}+ \alpha e^{-\beta t} \cdot \int_{0}^{\infty}e^{-\beta v}\phi(v)dv
+\alpha e^{-\beta t} \cdot \int_{0}^{t} e^{\beta v}\phi(v)dv,
\end{equation*}
which yields that
\begin{equation}\label{eq:phi-exp}
\phi(t)=\frac{\alpha\beta(2\beta-\alpha)}{2(\beta-\alpha)^{2}}e^{-(\beta-\alpha)t},\qquad t\ge 0.
\end{equation}
Plugging this into \eqref{eq:var}, we find that
\begin{align}\label{eq:Kt-exp}
\mbox{Var}(G(t)) = K(t)&=\frac{t}{1-\frac{\alpha}{\beta}}
+2\frac{\alpha\beta(2\beta-\alpha)}{2(\beta-\alpha)^{2}}\int_{0}^{t}\int_{0}^{t_{2}}e^{-(\beta-\alpha)(t_{2}-t_{1})}dt_{1}dt_{2}
\nonumber\\
&=\frac{\beta^3 }{(\beta-\alpha)^3} t  -\frac{\alpha\beta(2\beta-\alpha)}{(\beta-\alpha)^{4}}\left[1-e^{-(\beta-\alpha)t}\right],
\end{align}
and for $t \ge s$,
\begin{eqnarray*}\label{eq:cov-G-exponential}
\mbox{Cov}(G(t),G(s))&=&\int_{s}^{t}\int_{0}^{s}\phi(u-v)dvdu+K(s) \nonumber \\
&=& \frac{\alpha\beta(2\beta-\alpha)}{2(\beta-\alpha)^{4}} \left(e^{(\alpha -\beta)s} - e^{(\alpha -\beta)t} \right) \cdot \left( e^{(\beta-\alpha)s} -1\right) +K(s)\nonumber \\
&=& \frac{\beta^3 }{(\beta-\alpha)^3} s  + \frac{\alpha\beta(2\beta-\alpha)}{2(\beta-\alpha)^{4}} \left( -1  - e^{(\alpha -\beta)(t-s)} + e^{(\alpha -\beta)t} + e^{(\alpha -\beta)s} \right). \nonumber \\
\end{eqnarray*}
In this special case, we notice that $K(\cdot)$, the variance function of $G$, is nonlinear in $t$ in general. This is very different from the case when $N^{\mu}$ is a Poisson process (i.e., $h \equiv 0$) where $G$ becomes a standard Brownian motion. In addition, we find from \eqref{eq:Kt-exp} that when $h$ is a single exponential function, the variance function
$K(\cdot)$ have the following properties: $K(\cdot)$ is Lipschitz continuous, convex, and asymptotically linear as $t \rightarrow \infty$.


For a general exciting function $h,$ we next summarize important properties of $K(t)=\mbox{Var}(G(t))$ defined in \eqref{eq:var} and $\phi(t)$ defined in \eqref{eq:phi} in the following result. These properties provide us a better understanding about the variance of the limit Gaussian process $G$.  

\begin{proposition}\label{prop:Kt}
Under Assumption~1, the following hold:
\begin{itemize}
\item [(a)]
\begin{equation*}
\lim_{t\rightarrow\infty}\frac{K(t)}{t}=\frac{1}{(1-\Vert h\Vert_{L^{1}})^{3}}.
\end{equation*}


\item [(b)]
$\Vert\phi\Vert_{L^{1}}<\infty$, and the variance function $K(\cdot)$ is convex and Lipschitz continuous on $[0,\infty)$.

\item [(c)] If in addition $\int_{0}^{\infty}t^{2}h(t)dt<\infty$, then
\begin{align*}
&\lim_{t\rightarrow\infty}\left[K(t)-\frac{t}{(1-\Vert h\Vert_{L^{1}})^{3}}\right]
\\
&=\frac{1}{\pi(1-\Vert h\Vert_{L^{1}})^{3}}\int_{\mathbb{R}}\frac{1}{\omega^{2}}
\frac{(1-\Vert h\Vert_{L^{1}})^{2}-|1-\hat{h}(\omega)|^{2}}
{|1-\hat{h}(\omega)|^{2}}d\omega <0,
\nonumber
\end{align*}
where $\hat{h}$ is given by
\begin{eqnarray*}
\hat{h}(\omega) = \int_{0}^{\infty} e^{i\omega t} h(t) dt.
\end{eqnarray*}
\end{itemize}
\end{proposition}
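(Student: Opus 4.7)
The proof reduces everything to the covariance density $\phi$. Setting $c := \|h\|_{L^1}$ and swapping the order of integration in \eqref{eq:var} gives
\begin{equation*}
K(t) = \frac{t}{1-c} + 2\int_0^t (t-u)\phi(u)\,du, \qquad K''(t) = 2\phi(t) \ge 0,
\end{equation*}
so that once $\|\phi\|_{L^1}<\infty$ is available, part (b) is immediate: $K''\ge 0$ gives convexity, and $K'(t)\le 1/(1-c)+2\|\phi\|_{L^1}$ gives the Lipschitz bound.

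For part (a), the fastest route is the immigration--birth representation of Section~\ref{ImmigrationSection}: $N^1$ is a Poisson cluster process of unit immigrant rate whose cluster size $Z = 1+\sum_{i=1}^X Z_i$ with $X\sim\mathrm{Poisson}(c)$ satisfies $\mathbb{E}[Z]=1/(1-c)$ and $\mathbb{E}[Z^2]=1/(1-c)^3$ by the Galton--Watson recursion. The classical identity $\mathrm{Var}(N^1(t))/t\to\mathbb{E}[Z^2]$ for stationary Poisson cluster processes (see, e.g.,~\cite{Daley}) then yields (a). To extract $\|\phi\|_{L^1}<\infty$ for (b), I would observe that $A(t) := (K(t)/t-1/(1-c))/2 = \int_0^t (1-u/t)\phi(u)\,du$ is non-decreasing in $t$ (since $A'(t) = t^{-2}\int_0^t u\phi(u)\,du\ge 0$) and convergent by (a); the pointwise inequality $A(t)\ge \tfrac{1}{2}\int_0^{t/2}\phi(u)\,du$ then forces $\phi\in L^1$. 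Writing $A(t) = \int_0^t \phi(u)\,du - t^{-1}\int_0^t u\phi(u)\,du$ and using the Cesaro identity $t^{-1}\int_0^t u\phi(u)\,du \to 0$ (valid whenever $\phi\in L^1$) pins down $\|\phi\|_{L^1} = c(2-c)/[2(1-c)^3]$.

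For part (c), the identity $1/(1-c) - 1/(1-c)^3 = -2\|\phi\|_{L^1}$ yields
\begin{equation*}
K(t) - \frac{t}{(1-c)^3} = -2t\int_t^\infty \phi(u)\,du - 2\int_0^t u\phi(u)\,du.
\end{equation*}
Under the assumption $\int_0^\infty t^2 h(t)\,dt<\infty$, $\hat h$ is twice continuously differentiable at $\omega=0$, hence so is the symmetric Bartlett spectrum $\hat\phi_{\mathrm{sym}}(\omega) := 2\int_0^\infty \cos(\omega u)\phi(u)\,du = \frac{1}{1-c}\bigl(|1-\hat h(\omega)|^{-2}-1\bigr)$, which I would derive by Fourier-transforming the symmetric extension of \eqref{eq:phi}. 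A Fatou argument applied to the expansion of $\hat\phi_{\mathrm{sym}}$ around $\omega=0$ then forces $\int_0^\infty u^2\phi(u)\,du<\infty$, and in particular $t\int_t^\infty\phi(u)\,du\to 0$, so the displayed limit equals $-2\int_0^\infty u\phi(u)\,du$. To recast this as the claimed Fourier integral, I would use $u = \tfrac{2}{\pi}\int_0^\infty (1-\cos(\omega u))/\omega^2\,d\omega$ for $u>0$, interchange orders of integration (permissible by the second-moment bound just obtained), and substitute the Bartlett formula; the integrand collapses to a constant multiple of $[(1-c)^2-|1-\hat h(\omega)|^2]/[(1-c)^2|1-\hat h(\omega)|^2]$. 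Strict negativity follows from $|\hat h(\omega)|\le c$, which gives $|1-\hat h(\omega)|\ge 1-c$.

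The main obstacle is deriving the Bartlett-spectrum identity without circularly assuming $\phi\in L^1$: I would take Fourier transforms of the monotone Picard iterates of \eqref{eq:phi} (non-negative and monotone since $h\ge 0$) and pass to the limit. A secondary subtlety in (c) is absolute integrability of the $1/\omega^2$ kernel near the origin, which hinges on $(1-c)^2-|1-\hat h(\omega)|^2 = O(\omega^2)$ as $\omega\to 0$ --- a direct consequence of the Taylor expansion $1-\hat h(\omega) = (1-c) - i\omega\int_0^\infty t\,h(t)\,dt + O(\omega^2)$ guaranteed by the second-moment hypothesis on $h$.
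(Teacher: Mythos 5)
Your proposal is correct, but it takes a genuinely different route from the paper's in all three parts. For (a) the paper stays entirely in the frequency domain: it represents $K(t)$ via the Bartlett spectrum as $\frac{t}{2\pi(1-\Vert h\Vert_{L^{1}})}\int_{\mathbb{R}}\frac{\sin^{2}(\omega/2)}{(\omega/2)^{2}}|1-\hat{h}(\omega/t)|^{-2}d\omega$ and applies dominated convergence, whereas you appeal to the immigration--birth representation and the classical Poisson-cluster variance asymptotic $\mathrm{Var}(N^{1}(t))/t\rightarrow \mathbb{E}[Z^{2}]$; this does hold under $\mathbb{E}[Z^{2}]<\infty$ alone, but since it is essentially equivalent to the statement being proved you should cite or verify it with the same care the paper devotes to its spectral argument. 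For (b) the paper establishes $\Vert\phi\Vert_{L^{1}}<\infty$ \emph{independently} of (a), by integrating \eqref{eq:phi} over $[0,\infty)$ and bootstrapping with $H(M)=\int_{M}^{\infty}h\rightarrow 0$; your extraction of $\Vert\phi\Vert_{L^{1}}<\infty$ from (a) via the monotone functional $A(t)$ is a clean shortcut that additionally yields the exact value $\Vert\phi\Vert_{L^{1}}=c(2-c)/[2(1-c)^{3}]$, at the cost of making (b) logically dependent on (a); the convexity and Lipschitz steps are identical. For (c) the paper again works spectrally (it shows the integrand $f\in L^{1}(\mathbb{R})$ by two applications of L'H\^{o}pital near $\omega=0$ and then removes the oscillatory term by Riemann--Lebesgue), while you work in the time domain, identify the limit as $-2\int_{0}^{\infty}u\phi(u)du$, and only afterwards convert to the stated Fourier form via $\int_{\mathbb{R}}(1-\cos(\omega u))\omega^{-2}d\omega=\pi|u|$; your version is arguably more transparent about the sign and about the degenerate case $h\equiv 0$ (where the limit is $0$, not strictly negative --- a point both you and the paper gloss over, since $|1-\hat{h}(\omega)|\geq 1-\Vert h\Vert_{L^{1}}$ only gives $\leq 0$), but it requires the extra Fatou step producing $\int_{0}^{\infty}u^{2}\phi(u)du<\infty$. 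Finally, the circularity you flag as the ``main obstacle'' does not actually arise in your architecture: by the time you need the Bartlett identity (part (c)) you already have $\phi\in L^{1}$ from (b), so a direct Fourier transform of the symmetrized \eqref{eq:phi} suffices and the Picard-iterate device is unnecessary.
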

The proof of this result is given in Appendix~\ref{sec:prop-Kt}.

Part (a) of this result is known in the literature, and we include it here mainly for completeness. The results in other parts appear to be new.

Having characterized the covariance and variance functions of $G$, we can now elaborate further properties of the Gaussian process $G$. We summarize them in the following result. The proof is given in Appendix~\ref{sec:G-prop}.

\begin{proposition}\label{prop:G-property}
Under Assumption~1,
the Gaussian process $G$ in Theorem~\ref{thm:FCLT} has stationary increments. In addition, the Gaussian process $G$ is not Markovian unless $h\equiv 0$. Furthermore,
the paths of $G$ are H\"{o}lder continuous of order $\gamma$ for every $\gamma<\frac{1}{2}$.
\end{proposition}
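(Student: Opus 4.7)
Writing $R(s,t)=\mathrm{Cov}(G(s),G(t))$, I handle the three assertions in turn.

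\emph{Stationary increments.} My plan is to inherit this property directly from $N^{\mu}$ via the FCLT. Stationarity of $N^{\mu}$ (Section~\ref{sec:hawkes}(a)) yields the distributional identity $(N^{\mu}(h+t_i)-N^{\mu}(h))_{i=1}^{n}\stackrel{d}{=}(N^{\mu}(t_i))_{i=1}^{n}$ for every $h\ge 0$ and $0\le t_1<\cdots<t_n$. Centering by $\bar\lambda t_i$ and dividing by $\sqrt{\mu}$ preserves this equality in law, and Theorem~\ref{thm:FCLT} together with the continuous mapping theorem shows that the left-hand side converges in distribution to $(G(h+t_i)-G(h))_i$ while the right-hand side converges to $(G(t_i))_i$; hence these two vectors have the same law, giving stationary increments.

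\emph{Non-Markovian when $h\not\equiv 0$.} I argue by contradiction. Since $G(0)=0$ a.s.\ and $G$ has stationary increments, the covariance takes the canonical form $R(s,t)=\tfrac{1}{2}[K(s)+K(t)-K(|t-s|)]$. A centered Gaussian process is Markov iff its covariance satisfies the triangular identity $R(s,s)\,R(r,t)=R(r,s)\,R(s,t)$ for $0\le r\le s\le t$. Rewriting this identity in terms of $K$ alone gives
\begin{equation*}
2K(s)\bigl[K(r)+K(t)-K(t-r)\bigr]=\bigl[K(r)+K(s)-K(s-r)\bigr]\bigl[K(s)+K(t)-K(t-s)\bigr].
\end{equation*}
Since $K'(t)=1/(1-\Vert h\Vert_{L^{1}})+2\int_{0}^{t}\phi(w)\,dw$ is continuous, $K\in C^{1}$, and I differentiate the displayed identity in $r$. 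The key symmetry is that evaluating the differentiated identity at $r=0$ and at $r=s$ produces the \emph{same} right-hand side (both are proportional to $K'(0)+K'(s)$); equating the two left-hand sides and dividing by $2K(s)>0$ then yields the Cauchy-type relation
\begin{equation*}
K'(0)+K'(t)=K'(s)+K'(t-s),\qquad 0\le s\le t.
\end{equation*}
Setting $\psi(t):=K'(t)-K'(0)$ turns this into $\psi(t)=\psi(s)+\psi(t-s)$; continuity of $\psi$ forces $\psi(t)=ct$ for some constant $c\ge 0$. But $K$ is Lipschitz by Proposition~\ref{prop:Kt}(b), so $K'$ and hence $\psi$ are bounded, which forces $c=0$. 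Thus $K$ must be linear, and the defining formula~\eqref{eq:var} together with the non-negativity of $\phi$ then forces $\phi\equiv 0$; by~\eqref{eq:phi} this in turn forces $h\equiv 0$, contradicting $h\not\equiv 0$.

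\emph{H\"older continuity.} By the stationary increments property, $G(t)-G(s)\sim\mathcal{N}(0,K(|t-s|))$, so for each integer $p\ge 1$ the standard Gaussian moment formula combined with the Lipschitz bound $K(\tau)\le L\tau$ from Proposition~\ref{prop:Kt}(b) gives
\[
\mathbb{E}\bigl[|G(t)-G(s)|^{2p}\bigr]=(2p-1)!!\,K(|t-s|)^{p}\le (2p-1)!!\,L^{p}\,|t-s|^{p}.
\]
The Kolmogorov--Chentsov criterion then produces a modification of $G$ that is locally H\"older of order $\gamma$ for every $\gamma<(p-1)/(2p)$; letting $p\to\infty$ covers every $\gamma<1/2$, and the a.s.\ continuity of $G$ stated in Theorem~\ref{thm:FCLT} ensures this modification agrees with $G$. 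I expect the main obstacle to be the non-Markov argument, where the delicate step is spotting the symmetry in the triangular identity that cancels the cross-terms and isolates a clean functional equation in $K'$; the stationarity and H\"older parts are essentially routine.
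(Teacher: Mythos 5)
Your proposal is correct, and in parts it is more complete than what the paper itself writes down. For the stationary increments you push the shift-invariance of the finite-dimensional distributions of $N^{\mu}$ through the FCLT; this is legitimate because $G$ is almost surely continuous, so $J_1$-convergence gives convergence of all finite-dimensional distributions, and it has the small advantage of delivering joint shift-invariance of several increments at once. The paper instead checks directly from \eqref{eq:var} and \eqref{eq:cov-G} that $\mathrm{Var}(G(s+\tau)-G(s))=K(\tau)$ and leaves the polarization step (which upgrades this to stationarity of the joint law, valid for a centered Gaussian process) implicit. (Cosmetic point: you reuse $h$ for the time shift, which collides with the exciting function.) For the non-Markov claim the paper invokes the same Revuz--Yor triangular criterion but then merely asserts that one can ``directly check'' it fails when $h\not\equiv 0$; your argument actually carries out the verification, and does so cleanly: using the stationary-increment form $R(s,t)=\tfrac12[K(s)+K(t)-K(t-s)]$ (which I checked is consistent with \eqref{eq:cov-G}), differentiating the triangular identity in $r$ and comparing $r=0$ with $r=s$ — the right-hand side is $[K'(r)+K'(s-r)]$ times an $r$-independent factor, so the symmetry you exploit is real — and dividing by $K(s)\ge s/(1-\Vert h\Vert_{L^{1}})>0$ to reach the additive equation for $\psi=K'-K'(0)$. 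Continuity and boundedness of $\psi$ (from $\Vert\phi\Vert_{L^{1}}<\infty$ in Proposition~\ref{prop:Kt}(b)) then force $\psi\equiv 0$, hence $\phi\equiv 0$ and, by \eqref{eq:phi}, $h\equiv 0$. This is a genuinely sharper route than the paper's bare assertion. The H\"older part is the same Kolmogorov--Chentsov argument as the paper's, resting on the Lipschitz bound for $K$ from Proposition~\ref{prop:Kt}(b).
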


\begin{remark}
The increments of the Gaussian process $G$ are positively correlated and dependent in general. This is clear from \eqref{eq:cov-G} since for $s, \tau>0$
\begin{equation*}
\mbox{Cov}(G(s+\tau)- G(s),G(s))=\int_{s}^{s+\tau}\int_{0}^{s}\phi(u-v)dvdu,
\end{equation*}
which is nonzero and positive.
\end{remark}

\section{Infinite--server queues with self-exciting traffic} \label{sec:inf-server}
In this section we study infinite--server queues with high-volume self-exciting traffic, i.e., the arrival process is modeled by a univariate stationary Hawkes process. We establish limit theorems for such queues in Section~\ref{sec:limit}, characterize the limit process in Section~\ref{sec:property}, and discuss in detail the special case of exponential service time distributions in Section~\ref{sec:exp}.


\subsection{Limit theorems for $GI/\infty$ queues with self--exciting Hawkes traffic} \label{sec:limit}

In this section, we follow \cite{Kri1997} to establish the limit theorems for $GI/\infty$ queues with self--exciting Hawkes traffic.

We consider a sequence of infinite-server queueing models indexed by $\mu$ and
let $\mu \rightarrow \infty$. For each fixed $\mu$, the customers arrive to the $\mu-$th system according to a stationary univariate Hawkes process $N^{\mu}$ with a baseline intensity $\mu$ and an exciting function $h$. Hence, the average arrival rate is $\bar \lambda =\frac{\mu}{1-\Vert h\Vert_{L^{1}}}$. Write $Q^{\mu}(t)$ as the number of customers in the $\mu-$th system at time $t$.

We assume given an i.i.d. sequence of nonnegative random variables $\{\bar \eta_i: i \ge 1\}$ with a cumulative distribution function $F_0(x) = \mathbb{P}(\bar \eta_1 \le x)$ and another i.i.d. sequence of nonnegative random variables $\{\eta_i: i \ge 1\}$ with a cumulative distribution function $F(x) = \mathbb{P}(\eta_1 \le x)$. Assume $F_0(0)=F(0)=0$ for simplicity. The customers initially present in the infinite--server queueing system have remaining service times $\bar \eta_1, \ldots, \bar \eta_{Q^{\mu}(0)}$; the new arriving customers have service times $\eta_1, \eta_2, \ldots.$
All these service times, $Q^{\mu}(0)$, and the arrival process $N^{\mu}$ are assumed to be mutually independent. Then we have (see, e.g., \cite{Kri1997, Pang2010})
\begin{eqnarray*} \label{eq:Q-mu}
Q^{\mu}(t) = \sum_{i=1}^{Q^{\mu}(0)} 1_{\bar \eta_i >t} + \sum_{i=1}^{N^{\mu}(t)} 1_{\tau_i + \eta_i >t},
\end{eqnarray*}
where $\tau_i$ is the arrival time of the $i$-th new customer.

It follows from Theorem~3 in \cite{Kri1997} and our Theorem~\ref{thm:FCLT} that the following result holds. The proof is omitted.

\begin{proposition} \label{thm:IS1}
Suppose Assumption~1 holds.
Assume that for some constant $q_0$ and random variable $\xi,$
\begin{eqnarray} \label{eq:initial}
\sqrt{\mu} \left( \frac{Q^{\mu}(0)}{\mu} - q_0 \right) \Rightarrow \xi, \quad \text{as $\mu \rightarrow \infty.$}
\end{eqnarray}
Then the sequence of processes
$X^{\mu}$ defined by
\begin{eqnarray}
X^{\mu}(t) &=&{\sqrt{\mu}} \left( \frac{Q^{\mu}(t)}{\mu} - q_0 (1-F_0(t)) - \frac{1}{1-\Vert h\Vert_{L^{1}}} \cdot \int_{0}^t (1-F(t-u)) du \right), \label{eq:X-Q-1d}
\end{eqnarray}
as $\mu \rightarrow \infty,$
converges in distribution in $(D([0,\infty),\mathbb{R}),J_{1})$ to the process $X$ where
\begin{equation} \label{eq:X}
X(t) =(1- F_0(t)) \xi + \sqrt{q_0} \cdot W^0 (F_0(t))  +  \theta(t)  + \int_{0}^t  (1- F(t-u)) dG(u) .
\end{equation}
Here, $W^0=\{W^0(x): x \in [0,1]\}$ is a Brownian bridge,
$G$ is the mean-zero Gaussian process given in Theorem~\ref{thm:FCLT}, $\theta$ is a mean-zero Gaussian process with covariance function given by
\begin{equation} \label{eq:cov-theta}
\mathbb{E}[ \theta(s) \theta(t)] = \frac{1}{1-\Vert h\Vert_{L^{1}}} \cdot \int_{0}^s F(s-u) (1-F(t-u)) du, \quad \text{$0 \le s \le t$.}
\end{equation}
The random elements
$\xi, W^0, G, \theta$ are mutually independent.
\end{proposition}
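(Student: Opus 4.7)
The strategy is to reduce the statement to a direct application of Theorem~3 of \cite{Kri1997}, which is a general heavy--traffic FCLT for $GI/\infty$ queues. That theorem takes two inputs: a CLT for the initial queue length, and a functional CLT for the centered arrival process with a continuous limit. The first input is exactly the hypothesis \eqref{eq:initial}; the second is supplied by Theorem~\ref{thm:FCLT}, which gives $(N^{\mu}(t)-\bar\lambda t)/\sqrt{\mu}\Rightarrow G(t)$ in $(D([0,\infty),\mathbb{R}),J_1)$, with $\bar\lambda=\mu/(1-\Vert h\Vert_{L^{1}})$ and continuous Gaussian limit $G$.

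Starting from the sample-path identity
\begin{equation*}
Q^{\mu}(t) = \sum_{i=1}^{Q^{\mu}(0)} 1_{\bar \eta_i >t} + \sum_{i=1}^{N^{\mu}(t)} 1_{\tau_i + \eta_i >t},
\end{equation*}
I would recast the setup in the form of \cite{Kri1997}, identifying the long-run arrival rate $\bar\lambda/\mu=1/(1-\Vert h\Vert_{L^{1}})$ and the fluid profile $q_0(1-F_0(t))+(1-\Vert h\Vert_{L^{1}})^{-1}\int_0^t(1-F(t-u))\,du$, whose centering and scaling by $\sqrt{\mu}$ is exactly the $X^{\mu}(t)$ of \eqref{eq:X-Q-1d}. \cite{Kri1997} then decomposes $X^{\mu}$ into four asymptotically independent Gaussian pieces, each tied to an independent source of randomness: the initial count, the empirical process of residual service times of the $Q^{\mu}(0)$ initial customers, a Kiefer-type empirical field indexed by (new arrival time, new service time), and a stochastic convolution against the centered arrival process. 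These four pieces correspond to the four summands in \eqref{eq:X}, namely $(1-F_0(t))\xi$, $\sqrt{q_0}\,W^0(F_0(t))$, $\theta(t)$ with covariance \eqref{eq:cov-theta}, and $\int_0^t (1-F(t-u))\,dG(u)$. Mutual independence of the four limit objects follows from the mutual independence of $Q^{\mu}(0)$, $\{\bar\eta_i\}$, $\{\eta_i\}$ and $N^{\mu}$ postulated in the setup, propagated through the continuous-mapping arguments in \cite{Kri1997}.

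The main point requiring care is the convolution $\int_0^t(1-F(t-u))\,dG(u)$, since $G$ is neither Markovian nor a semimartingale in general, so the usual It\^o construction of \cite{Kri1997} does not apply verbatim. However, by Proposition~\ref{prop:G-property}, $G$ has locally H\"older-continuous paths of every order $\gamma<1/2$, and $u\mapsto 1-F(t-u)$ is of bounded variation on $[0,t]$; hence the integral is well defined pathwise as a Young integral, and equivalently via integration by parts
\begin{equation*}
\int_0^t (1-F(t-u))\,dG(u) = G(t)(1-F(0^+)) + \int_0^t G(u)\,F(du^-)(t-u),
\end{equation*}
when $F$ is absolutely continuous. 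The map $\psi\mapsto\int_0^{\cdot}(1-F(\cdot-u))\,d\psi(u)$ is continuous from $(D([0,\infty),\mathbb{R}),J_1)$ to itself on continuous paths, so the continuous mapping theorem, together with Theorem~\ref{thm:FCLT} and \eqref{eq:initial}, assembles the four pieces into $X$ and closes the argument. The main obstacle is thus purely analytic: verifying that the convolution functional is well defined and $J_1$-continuous on the subset of continuous trajectories supporting the limit $G$.
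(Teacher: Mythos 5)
Your proposal is correct and follows essentially the same route as the paper, which omits the proof precisely because the result is a direct consequence of Theorem~3 in \cite{Kri1997} applied with the arrival FCLT of Theorem~\ref{thm:FCLT} and the initial-condition hypothesis \eqref{eq:initial}. Your additional care about the pathwise (integration-by-parts) meaning of $\int_0^t(1-F(t-u))\,dG(u)$ matches the paper's own remark following the proposition, so there is nothing further to add.
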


\begin{remark}
The integral $ \int_{0}^t  (1- F(t-u)) dG(u) $ in \eqref{eq:X} is defined in a pathwise sense and is understood as the result of integration by parts.   See Theorem~3 in \cite{Kri1997}.
In addition, it is known in the literature  (see, e.g., \cite{Kri1997, Pang2010}) that one can represent the Gaussian process
$\theta$ as an integral with respect to a random field, that is,
\begin{equation*}
\theta(t) = - \int_{0}^t  \int_0^{t} 1_{s +x \le t} dU\left(\frac{s}{1-\Vert h\Vert_{L^{1}}}, F(x) \right),
\end{equation*}
where the
Kiefer process $U(\cdot, \cdot)$ is a two-parameter continuous centered Gaussian process on $\mathbb{R}_+ \times [0,1]$ with covariance function
\begin{equation*}
\mathbb{E}[ U(s, x) U(t,y) ]= (s \wedge t ) (x \wedge y - xy).
\end{equation*}
\end{remark}

As $\xi$ is independent of the other three Gaussian processes $W^0, G, \theta$, so for given $\xi = x_0 \in \mathbb{R},$ we obtain that the limit process $X$ in Proposition~\ref{thm:IS1} is Gaussian. We next discuss the properties of this Gaussian limit $X$ with a given initial condition $X(0)=\xi= x_0 \in \mathbb{R}.$

\subsection{Properties of the Gaussian process $X$ in \eqref{eq:X}} \label{sec:property}
In this section, we characterize the Gaussian limit process $X$ in Proposition~\ref{thm:IS1} by computing the mean, covariance function, and long-term behavior of $X$ with a given initial condition $X(0)= x_0 \in \mathbb{R}.$

It is clear from Proposition~\ref{thm:IS1} that for each fixed $t \ge 0,$ the mean of $X(t)$ is given by:
\begin{equation*}\label{eq:EXt}
\mathbb{E}[X(t)| X(0) = x_0]=(1-F_{0}(t)) x_0.
\end{equation*}

To compute the covariance of $X$, we can obtain from Proposition~\ref{thm:IS1} that for
$t\geq s \ge 0$,
\begin{align}
\text{Cov}(X(t),X(s))
&=q_{0}\text{Cov}(W^{0}(F_{0}(t)),W^{0}(F_{0}(s)))+\text{Cov}(\theta(t),\theta(s))
\nonumber \\
&\qquad
+\text{Cov}\left(\int_{0}^{t}(1-F(t-u))dG(u),\int_{0}^{s}(1-F(s-v))dG(v)\right).  \label{eq:covX}
\end{align}
By using the property of Brownian bridge, for $t\geq s$, we have
\begin{equation*}
q_{0}\text{Cov}(W^{0}(F_{0}(t)),W^{0}(F_{0}(s)))
=q_{0}F_{0}(s)(1-F_{0}(t)).
\end{equation*}
In addition, $\text{Cov}(\theta(t),\theta(s))$ is already given in \eqref{eq:cov-theta}. Hence,
it suffices to compute the last term in \eqref{eq:covX}.

We can directly compute that
\begin{align*}
&\text{Cov}\left(\int_{0}^{t}(1-F(t-u))dG(u),\int_{0}^{s}(1-F(s-v))dG(v)\right)
\nonumber \\
&=\mathbb{E}\left[\int_{0}^{t}(1-F(t-u))dG(u)\int_{0}^{s}(1-F(s-v))dG(v)\right]
\nonumber \\
&=\mathbb{E}\left[\int_{0}^{s}(1-F(t-u))dG(u)\int_{0}^{s}(1-F(s-v))dG(v)\right]
\nonumber \\
&\qquad\qquad\qquad
+\mathbb{E}\left[\int_{s}^{t}(1-F(t-u))dG(u)\int_{0}^{s}(1-F(s-v))dG(v)\right]
\nonumber
\\
&=\int_{0}^{s}(1-F(t-u))(1-F(s-u))dK(u)
\nonumber
\\
&\qquad\qquad\qquad\qquad
+\int_{0}^{s}\int_{s}^{t}(1-F(t-u))(1-F(s-v))\phi(v-u)dvdu
\nonumber
\\
&=\frac{1}{1-\Vert h\Vert_{L^{1}}}\int_{0}^{s}(1-F(t-u))(1-F(s-u))du
\nonumber
\\
&\qquad\qquad\qquad\qquad
+\int_{0}^{s}\int_{0}^{t}(1-F(t-u))(1-F(s-v))\phi(v-u)dudv,
\end{align*}
where we used \eqref{eq:var}.
Thus, we obtain the following result.

\begin{proposition}[Covariance function of the Gaussian process $X$ in \eqref{eq:X}] \label{prop:covX}
Given $X(0)= x_0 \in \mathbb{R}.$
For $0 \le s \le t,$ we have
\begin{align*}
\text{Cov}(X(s),X(t))
&=q_{0}F_{0}(s)(1-F_{0}(t)) + \frac{1}{1-\Vert h\Vert_{L^{1}}}\int_{0}^{s}(1-F(t-u))du
\\
&\qquad\qquad
+\int_{0}^{s}\int_{0}^{t}(1-F(t-u))(1-F(s-v))\phi(v-u)dudv,
\end{align*}
where $\phi$ is determined by the exciting function $h$ from Equation~\eqref{eq:phi}, and $\phi(x) = \phi(-x)$ for $x<0$.
\end{proposition}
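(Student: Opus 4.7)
The plan is to decompose $X$ via \eqref{eq:X} and exploit the mutual independence of $\xi, W^{0}, G, \theta$. Fixing the initial condition $X(0)=x_{0}$, the $\xi$-term is deterministic and contributes nothing to the covariance, so for $0\le s\le t$,
\begin{align*}
\text{Cov}(X(s),X(t)) &= q_{0}\,\text{Cov}\!\bigl(W^{0}(F_{0}(s)),W^{0}(F_{0}(t))\bigr) + \text{Cov}(\theta(s),\theta(t)) \\
&\quad + \text{Cov}\!\left(\int_{0}^{s}(1-F(s-v))\,dG(v),\,\int_{0}^{t}(1-F(t-u))\,dG(u)\right).
\end{align*}
The first summand is the standard Brownian bridge covariance $q_{0}F_{0}(s)(1-F_{0}(t))$, and the second is read directly from \eqref{eq:cov-theta} as $\frac{1}{1-\Vert h\Vert_{L^{1}}}\int_{0}^{s}F(s-u)(1-F(t-u))\,du$.

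The main work lies in the third summand. The Wiener-type integrals $\int_{0}^{a}f\,dG$ are defined pathwise via integration by parts (as noted in the Remark after Proposition~\ref{thm:IS1}), and for deterministic $f,g$ their covariance is determined by the second-moment structure of $G$. Reading off \eqref{eq:var} and \eqref{eq:cov-G}, the covariance measure of $G$ decomposes into a diagonal Poissonian mass of rate $\frac{1}{1-\Vert h\Vert_{L^{1}}}$ (the linear part of $K$) and an off-diagonal density $\phi(|u-v|)$. Using the almost sure continuity of $G$ from Theorem~\ref{thm:FCLT} and the integrability $\Vert\phi\Vert_{L^{1}}<\infty$ from Proposition~\ref{prop:Kt}(b), I would justify via Riemann-sum approximation the identity
\begin{equation*}
\text{Cov}\!\left(\int_{0}^{a}f\,dG,\,\int_{0}^{b}g\,dG\right) = \frac{1}{1-\Vert h\Vert_{L^{1}}}\int_{0}^{a\wedge b}f(u)g(u)\,du + \int_{0}^{a}\!\int_{0}^{b}f(u)g(v)\,\phi(|u-v|)\,du\,dv.
\end{equation*}
Specializing to $f(u)=1-F(t-u)$ and $g(v)=1-F(s-v)$, and using the symmetry $\phi(|u-v|)=\phi(v-u)$, this produces a $\frac{1}{1-\Vert h\Vert_{L^{1}}}$-weighted diagonal term over $[0,s]$ and the double integral $\int_{0}^{s}\!\int_{0}^{t}(1-F(t-u))(1-F(s-v))\phi(v-u)\,du\,dv$ exactly as appears in the proposition.

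To finish, I would add the $\theta$-contribution to the diagonal part above and use the algebraic cancellation
\begin{equation*}
F(s-u)(1-F(t-u)) + (1-F(s-u))(1-F(t-u)) = 1-F(t-u),
\end{equation*}
which collapses the two $\frac{1}{1-\Vert h\Vert_{L^{1}}}$-weighted single integrals into the single term $\frac{1}{1-\Vert h\Vert_{L^{1}}}\int_{0}^{s}(1-F(t-u))\,du$ in the proposition. Combining with the Brownian bridge piece yields the claimed formula. The main obstacle is the rigorous derivation of the Wiener-integral covariance identity against the non-Markovian Gaussian $G$, since $G$ is not a semimartingale in an obvious way and the standard Itô isometry does not apply; the rest is bookkeeping.
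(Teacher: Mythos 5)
Your proposal is correct and follows essentially the same route as the paper: decompose the covariance via the independence of $\xi, W^{0}, G, \theta$, evaluate the Brownian-bridge and $\theta$ terms directly, compute the $dG$-integral covariance using the covariance measure of $G$ (diagonal mass $\frac{1}{1-\Vert h\Vert_{L^{1}}}du$ plus off-diagonal density $\phi$, exactly as encoded in \eqref{eq:var} and \eqref{eq:cov-G}), and then collapse the two single integrals via $F(s-u)(1-F(t-u))+(1-F(s-u))(1-F(t-u))=1-F(t-u)$. The bilinear identity you isolate is precisely what the paper uses implicitly (it writes the $[0,s]^2$ contribution as $\int_0^s f(u)g(u)\,dK(u)$ before splitting it into the diagonal and $\phi$ pieces), so there is no substantive difference in approach.
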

An immediate observation from this result is that the covariance density of the traffic input Hawkes process, together with the service time distributions, leads to a delicate correlation structure of the limiting scaled queue length process $X$.

From Proposition~\ref{prop:covX}, we can immediately find that given $X(0)= x_0 \in \mathbb{R},$
\begin{align}
\text{Var}(X(t))
&=q_{0}F_{0}(t)(1-F_{0}(t))
+\frac{1}{1-\Vert h\Vert_{L^{1}}}\int_{0}^{t}(1-F(u))du
\nonumber \\
&\qquad\qquad
+\int_{0}^{t}\int_{0}^{t}(1-F(u))(1-F(v))\phi(v-u)dudv. \label{eq:varXt}
\end{align}

Note that $(1-F_0(t)) \xi$ converges to 0 almost surely as $t\rightarrow\infty$.
In view of \eqref{eq:X} and by letting $t\rightarrow\infty$ in \eqref{eq:varXt}, we get the following result about the long-term behavior of the limiting process $X$.


\begin{corollary}\label{prop:X-infty}
As $t \rightarrow \infty,$ the sequence of random variables $X(t)$ in \eqref{eq:X} converges in distribution to $X(\infty)$ which is a Gaussian random variable with mean zero and variance
\begin{align*}
\text{Var}(X(\infty))
=\frac{1}{1-\Vert h\Vert_{L^{1}}}\int_{0}^{\infty}(1-F(u))du
+\int_{0}^{\infty}\int_{0}^{\infty}(1-F(u))(1-F(v))\phi(v-u)dudv.
\end{align*}
\end{corollary}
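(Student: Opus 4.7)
The plan is to decompose $X(t) = (1-F_0(t))\,\xi + Y(t)$, where
\[
Y(t) := \sqrt{q_0}\,W^0(F_0(t)) + \theta(t) + \int_0^t (1-F(t-u))\,dG(u).
\]
Since $W^0$, $\theta$, and $G$ are mutually independent centered Gaussian processes by Proposition~\ref{thm:IS1}, $Y(t)$ is, for each fixed $t$, a centered Gaussian random variable. Because $F_0$ is a cumulative distribution function supported on $[0,\infty)$ with $F_0(0)=0$, we have $F_0(t)\to 1$, so the $\xi$-piece satisfies $(1-F_0(t))\,\xi \to 0$ almost surely. By Slutsky's theorem it therefore suffices to show that $Y(t)$ converges in distribution to a centered Gaussian random variable with the variance stated in the corollary.

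A sequence of centered Gaussian random variables $Y(t)$ converges in distribution to a centered Gaussian with variance $\sigma^2$ if and only if $\mathrm{Var}(Y(t))\to \sigma^2 < \infty$, as is immediate from the characteristic function $\exp(-\sigma_t^2 u^2/2)$. Since $\xi$ is independent of $Y$, $\mathrm{Var}(Y(t))$ equals the expression for $\mathrm{Var}(X(t))$ in \eqref{eq:varXt}, whose three summands are all nonnegative. The first, $q_0 F_0(t)(1-F_0(t))$, tends to $0$ because $F_0(t)\to 1$. The second, $\frac{1}{1-\Vert h\Vert_{L^{1}}}\int_0^t (1-F(u))\,du$, tends to $\frac{1}{1-\Vert h\Vert_{L^{1}}}\int_0^\infty (1-F(u))\,du$ by monotone convergence; this limit is finite precisely when the service time has finite mean, which is implicitly required for the stated variance to be finite. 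The double integral converges to its $t=\infty$ counterpart by monotone convergence of the nonnegative integrand.

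The only non-routine step is to verify that the limiting double integral is itself finite, so that the characteristic-function limit above is not degenerate. For this I would use Proposition~\ref{prop:Kt}(b), which gives $\Vert\phi\Vert_{L^{1}}<\infty$. Extending $\phi$ symmetrically to $\mathbb{R}$ as in Proposition~\ref{prop:covX}, the inner integral is bounded as
\[
\int_0^\infty \phi(v-u)\,du \;=\; \int_{-\infty}^{v}\phi(s)\,ds \;\le\; 2\,\Vert\phi\Vert_{L^{1}},
\]
uniformly in $v\ge 0$, so by Fubini's theorem
\[
\int_0^\infty\!\!\int_0^\infty (1-F(u))(1-F(v))\,\phi(v-u)\,du\,dv \;\le\; 2\,\Vert\phi\Vert_{L^{1}}\cdot\Vert 1-F\Vert_{L^{1}} \;<\;\infty.
\]
Combining the three limits yields $\mathrm{Var}(Y(t))\to \mathrm{Var}(X(\infty))$ with the stated value, and Slutsky's theorem then gives the distributional convergence $X(t)\Rightarrow X(\infty)$. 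The main substantive ingredient is thus the integrability of $\phi$ from Proposition~\ref{prop:Kt}(b), which transfers the $L^1$ structure of the Hawkes covariance density into finiteness of the stationary queue-length variance.
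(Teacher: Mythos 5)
Your argument is correct and follows essentially the same route as the paper, which simply lets $t\to\infty$ in \eqref{eq:varXt} after observing that $(1-F_0(t))\xi\to 0$ almost surely. You additionally supply the finiteness check for the limiting double integral via $\Vert\phi\Vert_{L^{1}}<\infty$ from Proposition~\ref{prop:Kt}(b), a detail the paper leaves implicit but which is a welcome (and correct) addition rather than a different approach.
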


\subsection{A special case: exponential service times} \label{sec:exp}
In this section, we discuss in detail the special case that service times of each customer are mutually independent and exponentially distributed. Without loss of generality, we consider service time distribution with mean one.


 Then we have the following result.

\begin{proposition} \label{thm:IS}
Suppose Assumption~1 holds.
Assume \eqref{eq:initial} and
\begin{equation}\label{ass1}
q_0= \frac{1}{1-\Vert h\Vert_{L^{1}}}, \quad \text{and} \quad F(x)=F_0(x) =1- e^{-x}, \quad \text{$x \ge 0$.}
\end{equation}
Then as $\mu \rightarrow \infty,$
the sequence of processes
$X^{\mu}$ in \eqref{eq:X-Q-1d} converges in distribution to the process $X_e$ with continuous sample paths in $(D([0,\infty),\mathbb{R}),J_{1})$ and
\begin{equation*}\label{eq:Xe1}
X_e(t) =\xi \cdot e^{-t}  + \frac{1}{\sqrt{1-\Vert h\Vert_{L^{1}}}}\cdot e^{-t} \cdot \int_{0}^t e^s dB(s) + e^{-t} \cdot \int_{0}^t e^s dG(s),
\end{equation*}
or equivalently,
\begin{equation} \label{eq:Xe}
X_e(t) =\xi - \int_{0}^t X_e(s) ds  +  \frac{1}{\sqrt{1-\Vert h\Vert_{L^{1}}}} \cdot B(t) +G(t),
\end{equation}
where $G$ is the mean-zero Gaussian process given in Theorem~\ref{thm:FCLT}, $B$ is a standard Brownian motion, and $\xi, G, B$ are mutually independent. In addition, the Gaussian process $X_e$ is non--Markovian unless $h \equiv 0$.
\end{proposition}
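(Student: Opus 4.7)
The plan is to specialize Proposition~\ref{thm:IS1} to the exponential service-time regime, recognize the limiting Gaussian process as an OU-type process driven by an independent Brownian motion and the Hawkes-limit process $G$, verify the equivalent integral SDE \eqref{eq:Xe}, and finally establish the non-Markovianity via a covariance triangular-identity argument.

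First I would substitute $F_0(x)=F(x)=1-e^{-x}$ and $q_0=1/(1-\Vert h\Vert_{L^{1}})$ into the limit \eqref{eq:X}. The deterministic prefactor $(1-F_0(t))\xi$ becomes $\xi e^{-t}$, and the $G$-driven integral $\int_0^t(1-F(t-u))dG(u)$ becomes $e^{-t}\int_0^t e^u dG(u)$, so two of the three terms in the claimed $X_e(t)$ are already present. It then suffices to show that the leftover mean-zero Gaussian piece $\sqrt{q_0}W^{0}(1-e^{-t})+\theta(t)$, which is independent of $\xi$ and $G$, can be realized as $\frac{1}{\sqrt{1-\Vert h\Vert_{L^{1}}}}e^{-t}\int_0^t e^s dB(s)$ for some standard Brownian motion $B$. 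Since both sides are mean-zero Gaussian with continuous paths, it is enough to match covariance functions: with $\text{Cov}(W^{0}(u),W^{0}(v))=u\wedge v-uv$ evaluated at $u=1-e^{-s},\ v=1-e^{-t}$ and with \eqref{eq:cov-theta} specialized to exponential CDFs, a short calculation collapses the sum into $(e^{s-t}-e^{-s-t})/(2(1-\Vert h\Vert_{L^{1}}))$; on the other hand It\^o isometry gives the same value $\frac{e^{-s-t}}{1-\Vert h\Vert_{L^{1}}}\int_0^{s}e^{2u}du$ for the OU representation. This identifies $X_e$ in law with the first displayed formula of the proposition.

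For the equivalent integral form \eqref{eq:Xe}, I would note that any OU-type process $Y(t)=e^{-t}\int_0^t e^s dZ(s)$ driven by a continuous process $Z$ with $Z(0)=0$ satisfies $Y(t)=Z(t)-\int_0^t Y(u)du$ by integration by parts (an It\^o calculation for $Z=B/\sqrt{1-\Vert h\Vert_{L^{1}}}$ and a pathwise one for $Z=G$, as permitted by the remark following Proposition~\ref{thm:IS1}). Adding the analogous identity $\xi e^{-t}=\xi-\int_0^t \xi e^{-s}ds$ for the initial piece and summing the three contributions yields \eqref{eq:Xe}.

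For the non-Markovianity, I would condition on $\xi=x_0$ so that $X_e-x_0 e^{-t}$ becomes a centered continuous Gaussian process. By the classical triangular-identity criterion, such a process is Markov if and only if its covariance $r$ satisfies $r(s,u)r(t,t)=r(s,t)r(t,u)$ for all $0\le s\le t\le u$. From the OU representation, $r$ splits as $r_B+r_G$, with $r_B$ a standard OU covariance (which satisfies the identity on its own) and $r_G(s,t)=e^{-(s+t)}\int_0^s\int_0^t e^{u+v}\phi(|v-u|)du\,dv$. Inserting this decomposition into the triangular identity and cancelling the terms that depend only on $r_B$, the resulting relation forces $r_G\equiv 0$, i.e., $\phi\equiv 0$, which by the discussion following \eqref{eq:phi} holds only when $h\equiv 0$. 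The main obstacle is precisely this last algebraic step: extracting a clean constraint on $\phi$ from the triangular identity so as to contradict the nonnegativity and nontriviality of $\phi$ whenever $h\not\equiv 0$.
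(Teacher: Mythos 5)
Your treatment of the weak convergence and of the equivalence between the two displayed forms of $X_e$ is correct, and it is in fact more self-contained than the paper's, which simply invokes Part II of Theorem~3 in \cite{Kri1997} after Proposition~\ref{thm:IS1}. Your covariance match checks out: with $F_0(s)=1-e^{-s}$ and $q_0=1/(1-\Vert h\Vert_{L^{1}})$, the sum $q_0F_0(s)(1-F_0(t))+\mathbb{E}[\theta(s)\theta(t)]$ collapses to $\tfrac{q_0}{2}(e^{s-t}-e^{-s-t})$, which is exactly the It\^{o}-isometry value $q_0e^{-s-t}\int_0^se^{2u}du$ for $q_0^{1/2}e^{-t}\int_0^te^udB(u)$; since both candidates are centered continuous Gaussian processes independent of $(\xi,G)$, this identifies the laws. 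The Fubini/integration-by-parts identity $e^{-t}\int_0^te^sdZ(s)=Z(t)-\int_0^te^{-u}\int_0^ue^sdZ(s)\,du$ then gives \eqref{eq:Xe}.

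The genuine gap is in the non-Markovianity argument. The triangular criterion $r(s,u)r(t,t)=r(s,t)r(t,u)$ is \emph{multiplicative}, so writing $r=r_B+r_G$ and noting that $r_B$ satisfies the identity on its own does not let you ``cancel the terms that depend only on $r_B$'': after using $r_B(s,u)r_B(t,t)=r_B(s,t)r_B(t,u)$ you are left with the cross-term relation
\begin{equation*}
r_B(s,u)r_G(t,t)+r_G(s,u)\bigl(r_B(t,t)+r_G(t,t)\bigr)=r_B(s,t)r_G(t,u)+r_G(s,t)\bigl(r_B(t,u)+r_G(t,u)\bigr),
\end{equation*}
which does not visibly force $r_G\equiv0$, and you yourself flag this step as the unresolved obstacle. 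The way to close it (this is essentially what the authors do in a suppressed version of this argument) is to differentiate the full identity in $u$ and then in $s$ to deduce that $e^{s+t}\phi(t-s)$ factors as $f'(s)g'(t)$, hence $\phi(t-s)=p(t)q(s)$; the standard functional-equation argument then forces $\phi(\tau)=ce^{\gamma\tau}$ with $c\ge0$, $\gamma<0$, and substituting this explicit $\phi$ back into the covariance shows the triangular identity still fails unless $c=0$, i.e.\ $\phi\equiv0$, which by \eqref{eq:phi} means $h\equiv0$. Without that (or some substitute, e.g.\ a direct numerical violation of \eqref{eq:markov-Gauss0} at three specific times), the non-Markov claim is not proved. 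Note that the paper's own visible justification --- that non-Markovianity of $X_e$ is ``evident'' from that of $G$ (Proposition~\ref{prop:G-property}) --- is itself only an assertion, so supplying the factorization argument would actually strengthen the exposition.
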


The proof of the weak convergence in this result immediately follows from Proposition~\ref{thm:IS1} and Part II of Theorem~3 in \cite{Kri1997}. The non-Markovian property of $X_e$ is also evident given the non-Markovian property of $G$ in Proposition~\ref{prop:G-property}. We omit the proof.

Note under the assumptions in \eqref{ass1}, one can readily verify from \eqref{eq:X-Q-1d} that
\begin{eqnarray} \label{eq:X-mu-t-exp}
X^{\mu}(t) ={\sqrt{\mu}} \left( \frac{Q^{\mu}(t)}{\mu} - q_0  \right) = \frac{1}{\sqrt{\mu}} \left(Q^{\mu}(t) - \frac{\mu}{1-\Vert h\Vert_{L^{1}}}\right) = \frac{1}{\sqrt{\mu}} \left(Q^{\mu}(t) - \bar \lambda \right).
\end{eqnarray}
In the classical case where the traffic is Poisson, i.e., $h \equiv 0$, it is well known that $G$ reduces to a standard Brownian motion, and the sequence $X^{\mu}$ converges in distribution to the limit process $X_e$ where $X_e$ is an Ornstein--Uhlenbeck (OU) diffusion process (driven by a Brownian motion) which is Markovian.
When the traffic model is a Hawkes process and the exciting function $h$ is nonzero, Equation~\eqref{eq:Xe} suggests that the limit process $X_e$ can be viewed as an Ornstein-Uhlenbeck (OU) process driven by the centered Gaussian process $Y$ where
\begin{equation*}
Y(t) := \frac{1}{\sqrt{1-\Vert h\Vert_{L^{1}}}} \cdot B(t) +G(t), \quad \text{for $t \ge 0.$}
\end{equation*}
We explore additional properties of the process $X_e$ in the next section.

\subsubsection{Properties of the Gaussian-driven OU process $X_e$}
From the results in Section~\ref{sec:property}, we can immediately obtain the mean, the covariance function and the long-term behavior of the Gaussian-driven OU process $X_e$.

\begin{proposition}\label{prop:Xe-cov}
Assume that $X_e(0)=x_0 \in \mathbb{R}$. Then we have
\begin{equation*}
\mathbb{E}[X_e(t)|X_e(0)=x_0]= x_0 \cdot e^{-t}, \quad \text{$t \ge 0$.}
\end{equation*}
In addition, for $0\leq s\leq t$, we have
\begin{equation*}  \label{eq:cov_Xe}
\mbox{Cov}(X_e(s),X_e(t))
=e^{-s-t}\bigg[ (e^{2s}-1) \cdot \frac{1}{1 - \Vert h \Vert_{L^1}}
+\int_{0}^{t}\int_{0}^{s} e^{u+ v} \phi(u-v)dv du
\bigg],
\end{equation*}
where $\phi$ is determined by the exciting function $h$ from Equation~\eqref{eq:phi}.
Finally, as $t \rightarrow \infty,$ the sequence of random variables $X_e(t)$ in \eqref{eq:Xe} converges in distribution to $X_e(\infty)$ which is a Gaussian random variable with mean zero and variance
\begin{equation} \label{eq:var-Xe-infty}
\mbox{Var}(X_e(\infty))
=\int_{0}^{\infty}e^{-t}\phi(t)dt+ \frac{1}{1-\Vert h\Vert_{L^{1}}}.  
\end{equation}
\end{proposition}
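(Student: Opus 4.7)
\textbf{Proof plan for Proposition~\ref{prop:Xe-cov}.} The strategy is to specialize the general results for $X$ obtained in Section~\ref{sec:property} to the exponential setting, namely $F(x)=F_0(x)=1-e^{-x}$ and $q_0 = 1/(1-\Vert h\Vert_{L^1})$, and then simplify. The mean formula $\mathbb{E}[X_e(t)\mid X_e(0)=x_0]=x_0 e^{-t}$ is immediate from the explicit representation \eqref{eq:Xe1} together with the fact that $B$, $G$ are mean-zero and independent of $\xi$, since $1-F_0(t)=e^{-t}$.

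For the covariance, I would substitute $F(x)=F_0(x)=1-e^{-x}$ and $q_0=1/(1-\Vert h\Vert_{L^1})$ directly into the formula in Proposition~\ref{prop:covX}. The Brownian-bridge term contributes $\frac{(1-e^{-s})e^{-t}}{1-\Vert h\Vert_{L^1}}$, while the $\theta$-term contributes $\frac{1}{1-\Vert h\Vert_{L^1}}\int_0^s e^{-(t-u)}du = \frac{e^{-t}(e^s-1)}{1-\Vert h\Vert_{L^1}}$; their sum collapses via $(1-e^{-s})+(e^s-1)=e^s-e^{-s}$ to $\frac{e^{-s-t}(e^{2s}-1)}{1-\Vert h\Vert_{L^1}}$. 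The third term in Proposition~\ref{prop:covX} factors cleanly as $e^{-s-t}\int_0^s\int_0^t e^{u+v}\phi(v-u)\,du\,dv$, which by $\phi(-x)=\phi(x)$ equals $e^{-s-t}\int_0^t\int_0^s e^{u+v}\phi(u-v)\,dv\,du$, yielding the claimed expression.

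For the long-term variance, I would plug $F(x)=1-e^{-x}$ into Corollary~\ref{prop:X-infty}. The first term immediately gives $\int_0^\infty e^{-u}du/(1-\Vert h\Vert_{L^1}) = 1/(1-\Vert h\Vert_{L^1})$. The main computational step is showing
\begin{equation*}
\int_0^\infty\!\!\int_0^\infty e^{-(u+v)}\phi(v-u)\,du\,dv \;=\; \int_0^\infty e^{-t}\phi(t)\,dt.
\end{equation*}
Here I would change variables $t=v-u$ in the inner integral, swap the order of integration (splitting according to the sign of $t$ so that $u$ ranges over $[0,\infty)$ or $[-t,\infty)$), perform the Gaussian-like $u$-integrals $\int_0^\infty e^{-2u}du=1/2$ and $\int_{-t}^\infty e^{-2u}du = e^{2t}/2$, and finally use the symmetry $\phi(-t)=\phi(t)$ to fold the negative-$t$ piece onto the positive-$t$ axis. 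The two halves combine to give $\int_0^\infty e^{-t}\phi(t)\,dt$.

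The only genuinely non-routine step is the last identity above; the rest is bookkeeping. I would flag that convergence of $\int_0^\infty e^{-t}\phi(t)\,dt$ is guaranteed by Proposition~\ref{prop:Kt}(b), which gives $\Vert\phi\Vert_{L^1}<\infty$, so the integrals involved are absolutely convergent and Fubini applies without issue.
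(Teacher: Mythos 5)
Your proposal is correct and follows essentially the same route as the paper, which gives no separate proof but derives the proposition directly by specializing Proposition~\ref{prop:covX} and Corollary~\ref{prop:X-infty} to $F(x)=F_0(x)=1-e^{-x}$ and $q_0=1/(1-\Vert h\Vert_{L^1})$; all of your intermediate computations (the collapse of the first two terms to $e^{-s-t}(e^{2s}-1)/(1-\Vert h\Vert_{L^1})$ and the identity $\int_0^\infty\int_0^\infty e^{-(u+v)}\phi(v-u)\,du\,dv=\int_0^\infty e^{-t}\phi(t)\,dt$ via the symmetry $\phi(-t)=\phi(t)$) check out. The only cosmetic point is that what you call the ``$\theta$-term'' in Proposition~\ref{prop:covX} is already the combination of the $\theta$-covariance with part of the $dG$-integral covariance, but the value you substitute is the correct one.
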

One can obtain an explicit formula for the covariance function and long-term limit of $X_e$ in the special case $h(t)=\alpha e^{-\beta t}$ where $0 \le \alpha< \beta$.
Recall when $h(t)=\alpha e^{-\beta t}$, we have $\phi$ given in \eqref{eq:phi-exp}.
Therefore, from Proposition~\ref{prop:Xe-cov} we can compute that for $0\leq s\leq t$:
\begin{align*}
&\mbox{Cov}(X_e(s),X_e(t))   \\
&=e^{-s-t}\bigg[(e^{2s}-1)\frac{\beta}{ \beta - \alpha}
+\frac{\alpha\beta(2\beta-\alpha)}{2(\beta-\alpha)^{2}}
\int_{0}^{t}\int_{0}^{s}e^{(u+v)}e^{-(\beta-\alpha)|u-v|}dvdu\bigg]
\\
&=e^{-s-t}\bigg[(e^{2s}-1)\frac{\beta}{ \beta - \alpha}
+\frac{\alpha\beta(2\beta-\alpha)}{2(\beta-\alpha)^{2}}
\frac{(e^{(1-\beta+\alpha)t}-e^{(1-\beta+\alpha)s})
(e^{(\beta-\alpha+1)s}-1)}{(1+\beta-\alpha)(1-\beta+\alpha)}
\\
&\qquad
+\frac{\alpha\beta(2\beta-\alpha)}{2(\beta-\alpha)^{2}}
\frac{1}{1+\beta-\alpha}\left(\frac{e^{2s}-1}{2}-\frac{e^{(1-\beta+\alpha)s}-1}{1-\beta+\alpha}\right)
\\
&\qquad\qquad
+\frac{\alpha\beta(2\beta-\alpha)}{2(\beta-\alpha)^{2}}
\frac{1}{1-\beta+\alpha}
\left(\frac{e^{2s}-e^{(1-\beta+\alpha)s}}{1+\beta-\alpha}
-\frac{e^{2s}-1}{2}\right)\bigg].
\end{align*}
In addition,
\begin{equation} \label{VAR1}
\mbox{Var}(X_e(\infty))
=\frac{\alpha\beta(2\beta-\alpha)}{2(\beta-\alpha)^{2}}\cdot \frac{1}{1+\beta-\alpha}+\frac{\beta}{ \beta - \alpha}.
\end{equation}

When $h$ is not a single exponential function, let us discuss how to compute the Laplace
transform of $\phi$ in \eqref{eq:var-Xe-infty} in general. It is proved in Lemma 11 of \cite{ZhuI}
that if $h$ is positive, continuous and integrable, $h$ can be approximated
by a sum of exponentials in both $L^{1}$ and $L^{\infty}$ norms,
that is there exist $\alpha_{i}\in\mathbb{R}$ and $\beta_{i}\in\mathbb{R}_{+}$ so that
$h_{n}(t):=\sum_{i=1}^{n}\alpha_{i}e^{-\beta_{i}t}\geq 0$ for every $t>0$
and $h_{n}\rightarrow h$ in both $L^{1}$ and $L^{\infty}$ norms.
For such a general exciting function $h$, let us write $h(t)=\sum_{i=1}^{\infty}\alpha_{i}e^{-\beta_{i}t}$.
Thus, for any $\omega>0$,
\begin{align*}
\int_{0}^{\infty}e^{-\omega t}\int_{0}^{\infty}h(t+v)\phi(v)dvdt
&=\int_{0}^{\infty}e^{-\omega t}\int_{0}^{\infty}\sum_{i=1}^{\infty}\alpha_{i}e^{-\beta_{i}(t+v)}\phi(v)dvdt
\\
&=\sum_{i=1}^{\infty}\frac{\alpha_{i}}{\beta_{i}+\omega}\tilde{\phi}(\beta_{i}),
\end{align*}
where $\tilde{g}(\omega):=\int_{0}^{\infty}e^{-\omega t}g(t)dt$ for $\omega>0$ and a given function $g$.
Thus, by taking Laplace transform on both sides of \eqref{eq:phi}, we get
\begin{equation}\label{phiLaplace}
\tilde {\phi}(\omega)=\frac{\tilde{h}(\omega)}{(1-\tilde{h}(\omega))(1-\Vert h\Vert_{L^{1}})}
+\frac{1}{1-\tilde{h}(\omega)}\sum_{i=1}^{\infty}\frac{\alpha_{i}}{\beta_{i}+\omega}\tilde{\phi}(\beta_{i}).
\end{equation}
By letting $\omega=\beta_{i}$, $i=1,2,\ldots$, we get
\begin{equation*}
\tilde{\phi}(\beta_{i})=\frac{\tilde {h}(\beta_{i})}{(1-\tilde{h}(\beta_{i}))(1-\Vert h\Vert_{L^{1}})}
+\frac{1}{1-\tilde{h}(\beta_{i})}\sum_{j=1}^{\infty}\frac{\alpha_{j}}{\beta_{j}+\beta_{i}}\tilde{\phi}(\beta_{j}).
\end{equation*}
Let $\mathbf{\tilde X}$ denote the vector
with $\mathbf{\tilde X}_{i}=\tilde{\phi}(\beta_{i})$, and $\mathbf{R}$ be the vector
with $\mathbf{R}_{i}=\frac{\tilde{h}(\beta_{i})}{(1-\tilde {h}(\beta_{i}))(1-\Vert h\Vert_{L^{1}})}$
and $\mathbf{M}$ be the matrix with entries $\mathbf{M}_{ij}=\frac{1}{1-\tilde{h}(\beta_{i})}\frac{\alpha_{j}}{\beta_{j}+\beta_{i}}$,
and finally $\mathbf{I}$ be the identity matrix. Thus, we have
\begin{equation} \label{eq:tildeX}
\mathbf{\tilde X}=\mathbf{R}+\mathbf{M}\mathbf{\tilde X},
\end{equation}
which implies that $\mathbf{\tilde X}=\mathbf{R}(\mathbf{I}-\mathbf{M})^{-1}$ provided that
$\mathbf{I}-\mathbf{M}$ is invertible, which holds
if for example the spectral radius of $\mathbf{M}$ is strictly less than $1$.
In practice, if we consider $h(t)=\sum_{i=1}^{d}\alpha_{i}e^{-\beta_{i}t}$, for some finite $d\in\mathbb{N}$,
where $\beta_{i}>0$, $\alpha_{i}\in\mathbb{R}$
and $h(t)\geq 0$ for every $t\geq 0$, then one can readily obtain $\mathbf{R}$ and $\mathbf{M}$, and hence
$\mathbf{\tilde X}$ and $\tilde{\phi}(\beta_{i})$ can be easily solved.
Once the values of $\tilde{\phi}(\beta_{i})$ are determined, so is the Laplace transform of $\phi$
given in \eqref{phiLaplace}. An example to illustrate this procedure will be provided in the next section (Example 2).

\subsubsection{Gaussian approximations and numerical experiments} \label{sec:Gaussian-approx}

Note that Proposition~\ref{thm:IS} and \eqref{eq:X-mu-t-exp} suggest that when the baseline intensity $\mu$ of the Hawkes arrival process is large, we can heuristically approximate the steady-state distribution of the number of customers $Q^{\mu}(\infty)$ in the $Hawkes/M/\infty$ queue as follows:
\begin{equation} \label{eq:approx-Gaussian}
Q^{\mu}(\infty) \approx \bar \lambda + \sqrt{\mu} X_e(\infty),
\end{equation}
where the random variable $\bar \lambda + \sqrt{\mu} X_e(\infty)$ follows a normal distribution with mean zero, and variance $\mu \cdot Var(X_e(\infty))$ where $Var(X_e(\infty))$ is given in \eqref{eq:var-Xe-infty}. A more precise statement of the approximation in \eqref{eq:approx-Gaussian} is
\begin{equation} \label{eq:approx-gau}
\mathbb{P}(Q^{\mu}(\infty) = i ) \approx  \frac{1}{\sigma} f\left( \frac{i - \bar \lambda }{\sigma}\right) , \quad \text{for $i=0, 1, \ldots,$}
\end{equation}
where $\sigma:= \sqrt{\mu \cdot \text{Var}(X_e(\infty))} $, and $f$ is the probability density function of a standard normal distribution.

We now present numerical experiments to demonstrate that the Gaussian approximation in \eqref{eq:approx-gau} is effective by making comparisons with simulations of the $Hawkes/M/\infty$ queue.  We consider two examples.

\textit{Example 1.} We first consider the Hawkes input process with a single exponential function:
\begin{equation*}\label{eq:h1}
h_1(t) = \frac{1}{2} e^{-t}.
\end{equation*}
 It is clear that $\Vert h_1\Vert_{L^{1}} = \frac{1}{2}$. Suppose the stationary Hawkes input process has a baseline intensity $\mu$. Then we can infer from \eqref{VAR1} that the Gaussian random variable $\bar \lambda + \sqrt{\mu} X_e(\infty)$ has mean $\bar \lambda = 2 \mu$ and variance $\sigma^2 = \mu \cdot\text{Var}(X_e(\infty)) = 3 \mu$, where we have used \eqref{VAR1} to find that
 $\text{Var}(X_e(\infty))=3.$

We now compare the Gaussian approximation in \eqref{eq:approx-gau} with simulations in Figure~\ref{fig1}. We observe that the approximation
agrees with the simulation results well, even for moderately large $\mu$ such as twenty.

\begin{figure}[htb]
	\centering
	\subfigure[$\mu =20$]{
		\begin{minipage}[b]{0.44\textwidth}
			\centering
			\includegraphics[width=\textwidth]{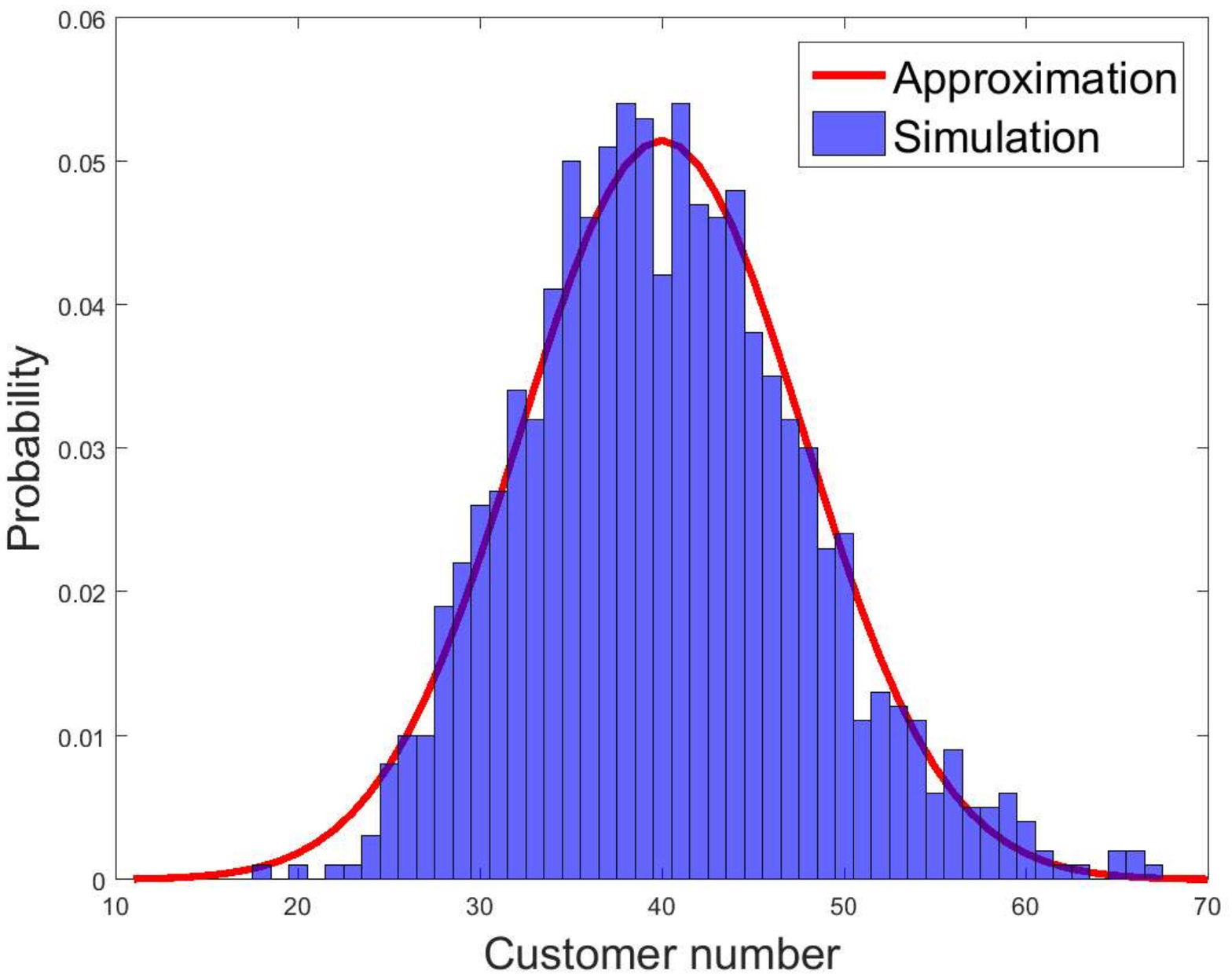}
	\end{minipage}	}			
	\subfigure[$\mu = 100$]{
		\label{fig:mini:subfig:1}
		\begin{minipage}[b]{0.45\textwidth}
			\centering
			\includegraphics[width=\textwidth]{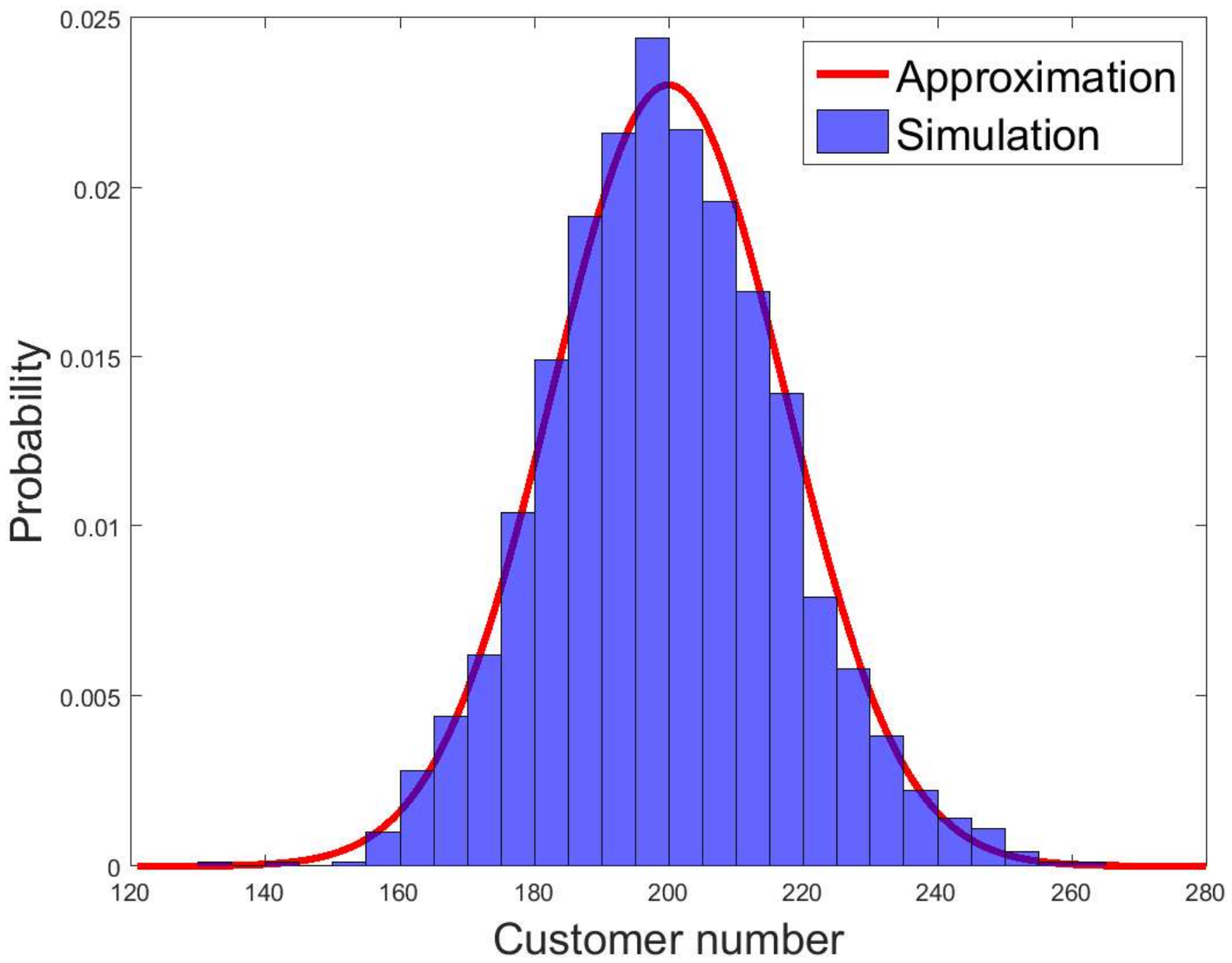}
	\end{minipage}}	
	\caption{The steady-state distribution of the number of customers $Q^{\mu}(\infty)$ in the $Hawkes/M/\infty$ queue where the input is a stationary Hawkes process with a baseline intensity $\mu$ and an exciting function $h_1(t) = \frac{1}{2} e^{-t}$. The service time distribution is exponential with mean one. The Gaussian approximation in \eqref{eq:approx-gau} is compared with simulations.}
	\label{fig1}
\end{figure}

\textit{Example 2:}
We next consider a Hawkes input process with an exciting function which is a sum of exponentials:
\begin{equation*} \label{eq:h2}
h_2(t) = \frac{1}{10} e^{-\frac{1}{4}t} + \frac{2}{5} e^{-4t}.
\end{equation*} It is also clear that $\Vert h_2\Vert_{L^{1}} = 0.5$. In this case, when the baseline intensity of the Hawkes process is $\mu$, we have the Gaussian random variable $\bar \lambda + \sqrt{\mu} X_e(\infty)$ has mean $\bar \lambda = 2 \mu$ and variance $\sigma^2 = \mu \cdot Var(X_e(\infty)) = 2.5246 \mu$.

 To see this, we compute $Var(X_e(\infty))$ using \eqref{eq:var-Xe-infty} and Equations~\eqref{phiLaplace} and \eqref{eq:tildeX}.
Note that $\tilde h_2 (1) =\int_0^{\infty} e^{-t}h_2(t) dt = 0.16$, and hence
\begin{equation} \label{eq:t-phi1}
\int_0^{\infty} e^{-t} \phi(t) dt = \tilde \phi(1) = \frac{0.16}{0.84} \cdot 2 + \frac{0.08}{0.84} \cdot    (\tilde \phi(0.25) + \tilde \phi(4)).
\end{equation}
One can readily verify from the expression of $h_2$ that
\begin{equation*}
\mathbf{R} = [0.8333, 0.1587] , \quad \text{and} \quad \mathbf{M} = \begin{bmatrix}
    0.2833       & 0.1333  \\
    0.0254     & 0.054  \\
\end{bmatrix}.
\end{equation*}
This yields
\begin{equation*}
\mathbf{\tilde X} = [\tilde \phi(0.25), \tilde \phi(4)]=\mathbf{R}(\mathbf{I}-\mathbf{M})^{-1}= [1.1745, 0.3333].
\end{equation*}
On combining \eqref{eq:t-phi1} and \eqref{eq:var-Xe-infty} we deduce that when the traffic is a Hawkes process with an exciting function $h_2$, then we have
\begin{equation*}
\mbox{Var}(X_e(\infty))
= \tilde \phi(1) + 2 = 2.5246.
\end{equation*}

\begin{figure}[htb]
	\centering
	\subfigure[$\mu =20$]{
		\begin{minipage}[b]{0.45\textwidth}
			\centering
			\includegraphics[width=\textwidth]{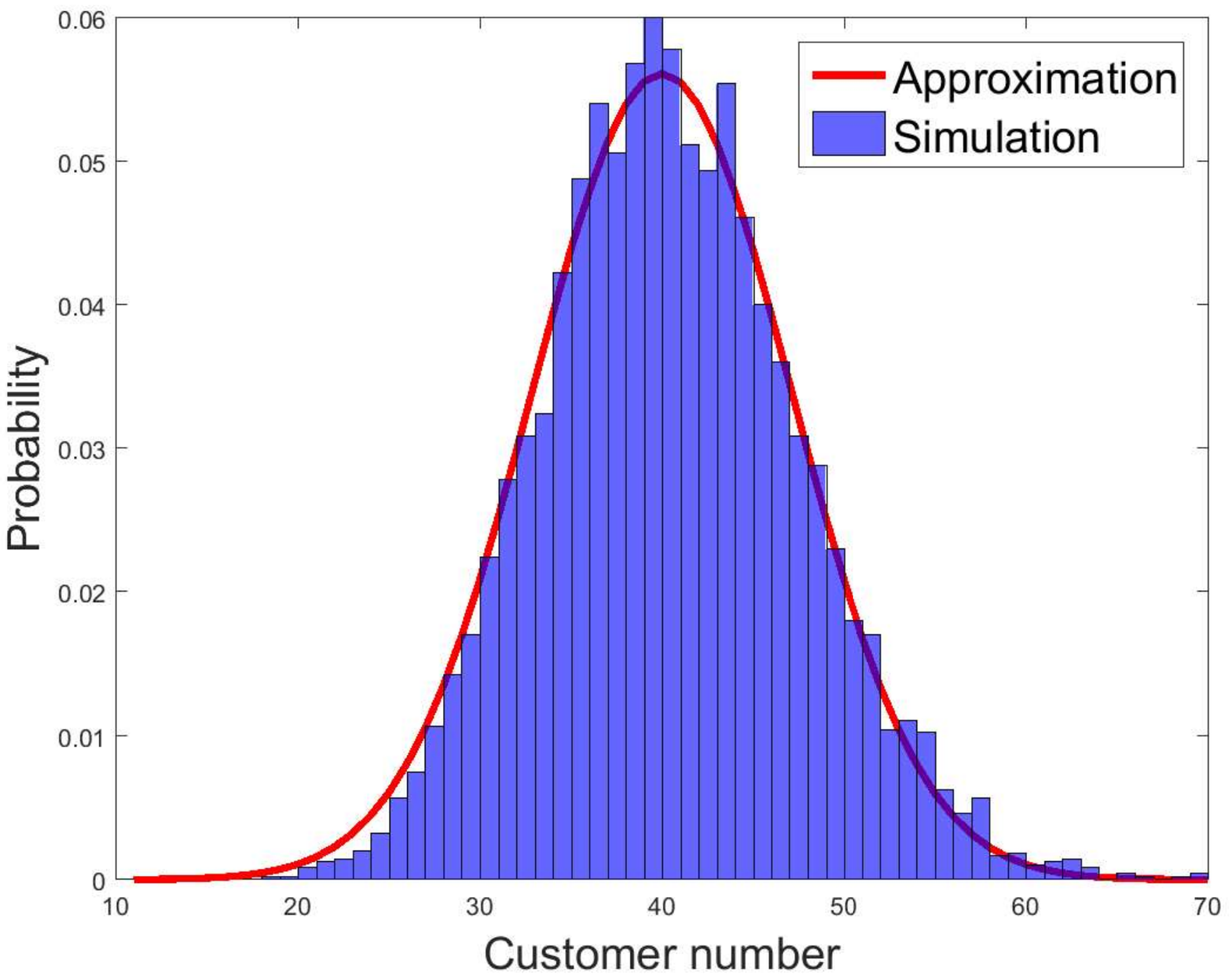}
	\end{minipage}	}			
	\subfigure[$\mu = 100$]{
		\label{fig:mini:subfig:1}
		\begin{minipage}[b]{0.44\textwidth}
			\centering
			\includegraphics[width=\textwidth]{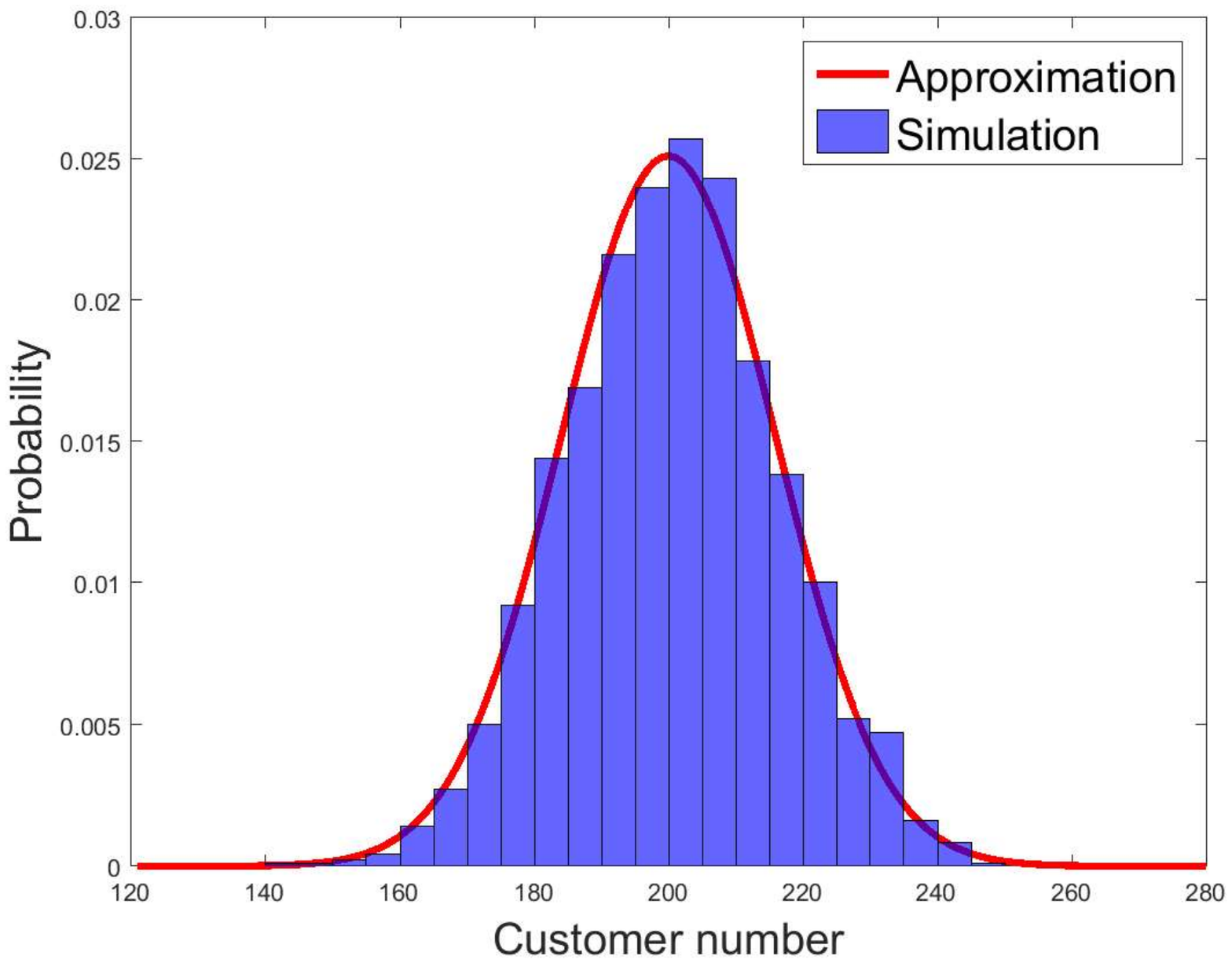}
	\end{minipage}}	
	\caption{The steady-state distribution of the number of customers $Q^{\mu}(\infty)$ in the $Hawkes/M/\infty$ queue where the input is a stationary Hawkes process with a baseline intensity $\mu$ and an exciting function $h_2(t) =\frac{1}{10} e^{-\frac{t}{4}} + \frac{2}{5} e^{-4t}$. The service time distribution is exponential with mean one. The Gaussian approximation in \eqref{eq:approx-gau} is compared with simulations.}
	\label{fig2}
\end{figure}

We next demonstrate in Figure~\ref{fig2} that the Gaussian approximation in \eqref{eq:approx-gau} is effective by comparing with simulations of the infinite--server queue with Hawkes input where the exciting function is $h_2$. We also observe that the Gaussian approximation for the steady--state customer number $Q^{\mu}(\infty)$  agrees with the simulation results very well.

\section{Infinite-server queues with mutually--exciting traffic} \label{sec:multi-hawkes}
In this section we extend Theorem~\ref{thm:FCLT} to multivariate stationary Hawkes processes, and establish limit theorems for infinite--server queues with multivariate Hawkes traffic.

\subsection{FCLT for multivariate stationary Hawkes processes}
In this section, we establish an FCLT for multivariate stationary Hawkes processes.

We consider a $k$-dimensional stationary Hawkes process $\mathbb{N}^{(\mu)}=(N^{\mu,1},N^{\mu,2},\ldots,N^{\mu,k})$,
where $N^{\mu,i}$ has the intensity:
\begin{equation} \label{eq:intensity-multi}
\lambda^{\mu,i}(t)=\mu p_{i}+\sum_{j=1}^{k}\int_{-\infty}^{t-}h_{ij}(t-s)N^{\mu,j}(ds),
\end{equation}
where $p_{i} \ge 0$ for $1\leq i\leq k$, and the exciting kernel $h_{ij}(\cdot)$ satisfies Assumption~\ref{assump1}. With slight abuse of notations, we still use $\mu>0$ as a scaling parameter, and study the limit of $\mathbb{N}^{\mu}$ as we send $\mu \rightarrow \infty$. Note that for each fixed $\mu>0$, we can obtain from \eqref{BarLambdaDefn} that for each $t,$
\begin{equation*} \label{eq:lambda-a}
\mathbb{E}[\mathbb{N}^{(\mu)}(t)]= \bar{\lambda} t =\mu a t,
\end{equation*}
where $\bar \lambda=(\bar \lambda_i)_{1 \le i \le k}$ is the average arrival rate and the vector $a=(a_i)_{1 \le i \le k}$ is given by
\begin{equation} \label{eq:a}
a := \left( \mathbb{I} - \mathbb{H}\right)^{-1} \cdot p,
\end{equation}
with $p=(p_i)_{1 \le i \le k}$, and $\mathbb{H}= (\Vert h_{ij}\Vert_{L^{1}})_{1\leq i,j\leq k}$.

Similar as in the univariate case, we can infer from the immigration-birth representation of multivariate Hawkes processes, when $\mu$ is a positive integer,
that the multivariate stationary Hawkes process $\mathbb{N}^{(\mu)}$ can be written as the sum of i.i.d. copies of $\mathbb{N}^{(1)}$, see e.g. \cite{JHR}. The covariance density of $\mathbb{N}^{(1)}$, which we still use the notation $\Phi=(\Phi_{ij})_{1\leq i,j\leq k}$ as in \eqref{eq:cov-density}, is given by
\begin{equation}\label{cov-multi}
\Phi(\tau)=h(\tau)\text{diag}(a)+\int_{-\infty}^{\tau}h(\tau-v)\Phi(v)dv,  \quad \text{for $\tau\geq 0$,}
\end{equation}
and $\Phi_{ij}(-\tau)=\Phi_{ji}(\tau)$ for every $\tau>0$ and $1\leq i,j\leq k$.
Here $\text{diag}(a)$ is the diagonal matrix with entries $a_{i}$'s on the diagonal,
and $h(t)=(h_{ij}(t))_{1\leq i,j\leq k}$. The variance function of $\mathbb{N}^{(1)}$, which we still use the notation
$\mathbb{K}(t)=(K_{ij}(t))_{1\leq i,j\leq k}$ as in \eqref{eq:var-function}, is given by
\begin{equation}\label{var-multi}
\mathbb{K}(t):=\text{diag}(a)t+2\int_{0}^{t}\int_{0}^{t_{2}}\Phi(t_{2}-t_{1})dt_{1}dt_{2}.
\end{equation}

Then we can obtain the following limit theorem for multivariate stationary Hawkes processes, which extends Theorem~\ref{thm:FCLT}. The proof is given in Appendix~\ref{sec:proof-multivariate}.



\begin{theorem}\label{thm:multi-FCLT}
Under Assumption~\ref{assump1}, we have as $\mu\rightarrow\infty$,
\begin{equation*}
\frac{\mathbb{N}^{(\mu)}(t)-\bar \lambda t}{\sqrt{\mu}}\Rightarrow\mathbb{G}(t),
\end{equation*}
in $(D([0,\infty),\mathbb{R}^{k}),J_{1})$, where $\mathbb{G}= (\mathbb{G}_i)_{1 \le i \le k}$ is a mean-zero almost surely continuous $k-$dimensional Gaussian process
with the covariance function
\begin{equation*}\label{eq:multi-cov-G}
\mbox{Cov}(\mathbb{G}(t),\mathbb{G}(s))=\int_{s}^{t}\int_{0}^{s}\Phi(u-v)dvdu+\mathbb{K}(s), \quad \text{for $t \ge s$,}
\end{equation*}
where $\Phi$ is given in \eqref{cov-multi} and $\mathbb{K}$ is given in \eqref{var-multi}.
\end{theorem}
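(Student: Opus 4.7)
My plan is to extend the proof of Theorem~\ref{thm:FCLT} to the $k$-variate setting by reducing to an i.i.d.\ sum via the immigration-birth representation. First assume $\mu\in\mathbb{N}$; the non-integer case follows by sandwiching $\mathbb{N}^{(\mu)}$ between the processes with baseline vectors $\lfloor\mu\rfloor p$ and $\lceil\mu\rceil p$, using that the type-$i$ immigration Poisson process of rate $\mu p_{i}$ decomposes as a superposition of independent streams of rate $p_{i}$. Thus for $\mu\in\mathbb{N}$ the immigration-birth construction of Section~\ref{ImmigrationSection} gives the distributional identity
\begin{equation*}
\mathbb{N}^{(\mu)} \stackrel{d}{=} \sum_{j=1}^{\mu} \mathbb{N}^{(1),j},
\end{equation*}
where the $\mathbb{N}^{(1),j}$ are i.i.d.\ copies of the stationary $k$-variate Hawkes process with baseline vector $p$ and matrix kernel $h$. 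This reduces the claim to a functional central limit theorem for normalized sums of i.i.d.\ $D([0,\infty),\mathbb{R}^{k})$-valued random elements.

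Next I would establish the two ingredients for this FCLT: convergence of finite-dimensional distributions and tightness. Finite-dimensional convergence follows from the multivariate CLT combined with the Cram\'er--Wold device, once we know that $\mathbb{N}^{(1)}(t)$ has finite second moments (guaranteed by item~(d) of Section~\ref{sec:hawkes}). For tightness in $(D([0,\infty),\mathbb{R}^{k}),J_{1})$, it suffices to check tightness coordinate by coordinate. For each $1 \le i \le k$, the process $N^{(1),i}(t)-\int_{0}^{t} \lambda^{(1),i}(s)ds$ is a martingale by item~(b), and combining this with a fourth-moment bound of the form $\mathbb{E}[|\mathbb{N}^{(1)}(t)-\mathbb{N}^{(1)}(s)|^{4}] \le C(t-s)^{2}$ gives the standard Billingsley-type tightness criterion. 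To justify the moment bound I would invoke the association property from item~(e): association together with infinite divisibility of the Poisson cluster representation yields finiteness of the moment generating function of $\mathbb{N}^{(1)}(t)$ in a neighborhood of zero, which in turn gives uniform control of the fourth moments of increments, exactly as in the univariate case.

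Finally, the covariance of the Gaussian limit is identified from the i.i.d.\ decomposition as $\mbox{Cov}(\mathbb{G}(t),\mathbb{G}(s))=\mbox{Cov}(\mathbb{N}^{(1)}(t),\mathbb{N}^{(1)}(s))$. For $t \ge s$ I decompose
\begin{equation*}
\mbox{Cov}(\mathbb{N}^{(1)}(t),\mathbb{N}^{(1)}(s)) = \mbox{Var}(\mathbb{N}^{(1)}(s)) + \mbox{Cov}\bigl(\mathbb{N}^{(1)}(t)-\mathbb{N}^{(1)}(s),\,\mathbb{N}^{(1)}(s)\bigr).
\end{equation*}
The first term equals $\mathbb{K}(s)$ by \eqref{var-multi}, and the second is computed via the stationary covariance density \eqref{cov-multi} as the matrix-valued double integral $\int_{s}^{t}\int_{0}^{s}\Phi(u-v)\,dv\,du$, producing the stated formula. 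The main obstacle, as in the univariate Theorem~\ref{thm:FCLT}, is obtaining the fourth-moment bounds required for tightness; the association property together with finiteness of cluster sizes under the sub-criticality condition on the spectral radius of $\mathbb{H}$ are the essential tools, and almost sure sample-path continuity of $\mathbb{G}$ then follows from the resulting $J_{1}$-tightness together with the absence of fixed discontinuities in the pre-limit processes.
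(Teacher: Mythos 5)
Your overall architecture matches the paper's: reduce to an i.i.d.\ sum via the immigration--birth representation, prove finite-dimensional convergence by the CLT and Cram\'er--Wold, prove tightness via moment bounds, and identify the covariance from the covariance of a single copy. The covariance computation and the treatment of non-integer $\mu$ are fine (the paper splits off an independent remainder process with baseline $\mu-\lfloor\mu\rfloor$ and shows it is negligible, which is essentially equivalent to your superposition argument). However, there is a genuine gap in your tightness step: the single-increment bound $\mathbb{E}\bigl[\|\tilde{\mathbb{N}}(u)-\tilde{\mathbb{N}}(s)\|^{4}\bigr]\leq C(u-s)^{2}$ that you propose is \emph{false} for point processes. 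Already for a rate-one Poisson process the centered fourth moment of an increment over an interval of length $\delta$ equals $\delta+3\delta^{2}$, which is of order $\delta$, not $\delta^{2}$, as $\delta\to 0$; the same lower bound of order $(u-s)$ holds here since the conditional jump variance contributes linearly. This is exactly why one cannot use the one-increment moment criterion for processes with jumps. The paper instead verifies the two-disjoint-increment criterion
\begin{equation*}
\mathbb{E}\bigl[\|\tilde{\mathbb{N}}(u)-\tilde{\mathbb{N}}(t)\|^{2}\,\|\tilde{\mathbb{N}}(t)-\tilde{\mathbb{N}}(s)\|^{2}\bigr]\leq C(u-s)^{2},
\qquad s\leq t\leq u,
\end{equation*}
(conditions of Hahn's theorem in one dimension, and of Jacod--Shiryaev, Chapter VI, Theorem 4.1 in $k$ dimensions), where the product structure over disjoint intervals is what produces the exponent $2$. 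Your plan as written would fail at this step and needs to be replaced by the product-form bound, estimated via the martingale/predictable quadratic variation decomposition, Cauchy--Schwarz, and the Burkholder--Davis--Gundy inequality exactly as in the univariate proof.

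A second, smaller but still substantive gap is your justification of the moment bounds themselves. You assert that association together with infinite divisibility ``yields finiteness of the moment generating function'' of $\mathbb{N}^{(1)}(t)$ near zero; neither property gives exponential moments by itself, and this is in fact the bulk of the technical work in the paper's proof. The paper's route is: (i) an exponential martingale identity for the multivariate Hawkes process started from \emph{empty} history yields $\mathbb{E}^{\emptyset}[e^{\sum_{i}c_{i}N^{i}(t)}]\leq e^{Ct}$ for suitably small $c_{i}>0$ chosen via the subcriticality of $\mathbb{H}$; (ii) association is then used in the direction $\prod_{n=1}^{t}\mathbb{E}^{\emptyset}[e^{c_{\ell}N^{\ell}(n-1,n]}]\leq\mathbb{E}^{\emptyset}[e^{c_{\ell}N^{\ell}(t)}]$; and (iii) ergodicity transfers the resulting bound on unit-interval counts from the empty-history process to the stationary version. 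Only then does one obtain $\mathbb{E}[(\lambda^{j}(0))^{4}]<\infty$ (via Jensen and local boundedness of $h_{ij}$), which feeds into the increment estimates. Your appeal to ``finiteness of cluster sizes'' gestures at an alternative route through total progeny of the subcritical multitype branching process, but for the stationary version one must also control contributions from immigrants arbitrarily far in the past, so this would need to be worked out rather than asserted. Finally, your claim that coordinate-wise tightness suffices in $(D([0,\infty),\mathbb{R}^{k}),J_{1})$ is acceptable only because the limit is continuous (product and strong $J_{1}$ topologies coincide on continuous paths); this deserves a sentence of justification, though the paper avoids the issue by applying the vector-valued criterion directly.
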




\subsection{Limit theorem for $GI/\infty$ queues with multivariate Hawkes traffic}
In this section, we rely on Theorem~\ref{thm:multi-FCLT} to develop approximations for infinite--server queues with high-volume multivariate stationary Hawkes traffic.
Such a queueing model can be viewed as a multi-class queueing model with correlated arrivals as mutually--exciting Hawkes processes.

We first establish limit theorems for such queues.
Similar as in Section~\ref{sec:limit}, we consider a sequence of infinite-server queueing models indexed by $\mu \in \mathbb{R}_{+}$ and
let $\mu \rightarrow \infty$. For each fixed $\mu$, there are $k$ classes of customers arriving to the $\mu-$th system according to a stationary $k-$dimensional Hawkes process $\mathbb{N}^{(\mu)}$ with a baseline intensity vector $\mu \cdot {p}$ and an exciting kernel $(h_{ij})_{1\leq i,j\leq k}$. That is, the arrival process of customer class $i$ is the $i-$th component of the multivariate stationary Hawkes process $\mathbb{N}^{(\mu)}$.

In addition, for $1 \le i \le k$, each customer class $i$ may have different service requirements. For each $i$,
we assume given an i.i.d. sequence of nonnegative random variables $\{\bar \eta_{i,j}: j \ge 1\}$ with a cumulative distribution function $F_{i0}(x) = \mathbb{P}(\bar \eta_{i,j} \le x)$ and another i.i.d. sequence of nonnegative random variables $\{\eta_{i,j}: j \ge 1\}$ with a cumulative distribution function $F_i(x) = \mathbb{P}(\eta_{i,1} \le x)$. Assume $F_{i0}(0)=F_i(0)=0$ for all $i$ for simplicity. The customers of class $i$ initially present in the infinite--server queueing system have remaining service times $\bar \eta_{i,1}, \bar \eta_{i,2}, \ldots$; the new arriving customers of class $i$ have service times $\eta_{i,1}, \eta_{i,2}, \ldots.$
All these service times, the random initial numbers of customers of each class denoted by $\mathbb{Q}_1^{\mu}(0), \ldots, \mathbb{Q}_k^{\mu}(0)$, and the multivariate Hawkes arrival process $\mathbb{N}^{(\mu)}$ are assumed to be mutually independent.

Denote $\mathbb{Q}_i^{\mu}(t) $ as the number of customers of class $i$ in the $\mu-$th system at time $t$, and write the vector $\mathbb{Q}^{\mu}(t) := (\mathbb{Q}_i^{\mu}(t))_{1 \le i \le k}$.
Given Theorem~\ref{thm:multi-FCLT}, we can then obtain the following result. Recall the vector $a=(a_i)_{1 \le i \le k}$ given in \eqref{eq:a}.

\begin{proposition} \label{thm:IS-multi-gen}
Suppose Assumption~1 holds.
Assume that for some vector constant $q=(q_{10}, \ldots, q_{k0})$ and random vector $(\xi_1, \ldots, \xi_k),$
\begin{eqnarray} \label{eq:initial}
\sqrt{\mu} \left( \frac{\mathbb{Q}^{\mu}(0)}{\mu} - q \right) \Rightarrow (\xi_1, \ldots, \xi_k), \quad \text{as $\mu \rightarrow \infty.$}
\end{eqnarray}
Then the sequence of $k-$dimensional processes
$\mathbb{X}^{\mu}$ with its $i-$th component defined by
\begin{eqnarray}
\mathbb{X}_i^{\mu}(t) &=&{\sqrt{\mu}} \left( \frac{\mathbb{Q}_i^{\mu}(t)}{\mu} - q_{i0} (1-F_{i0}(t)) - a_i \cdot \int_{0}^t (1-F_i(t-u)) du \right), \label{eq:X-Q}
\end{eqnarray}
as $\mu \rightarrow \infty,$
converges in distribution in $(D([0,\infty),\mathbb{R}^k),J_{1})$ to the process $\mathbb{X}=(\mathbb{X}_i)_{1 \le i \le k}$ where
\begin{eqnarray*} \label{eq:X-multi-gen}
\mathbb{X}_i(t) &=&(1- F_{i0}(t)) \xi_i + \sqrt{q_{i0}} \cdot W^{i0} (F_{i0}(t))    + \int_{0}^t  (1- F_i(t-u)) d\mathbb{G}_i(u) \nonumber \\
&& - \int_0^t \int_0^t  1_{s +x \le t} dU_i \left(a_i s, F_i(x) \right).
\end{eqnarray*}
Here, $W^{i0}=\{W^{i0}(x): x \in [0,1]\}$ is a Brownian bridge,
$\mathbb{G}_i$ is the $i-$th component of the $k-$dimensional Gaussian process $\mathbb{G}$ given in Theorem~\ref{thm:multi-FCLT}, $U_i$ is a
Kiefer process which is a two-parameter continuous centered Gaussian process on $\mathbb{R}_+ \times [0,1]$ with covariance function
\begin{equation*}
\mathbb{E}[ U_i(s, x) U_i(t,y) ]= (s \wedge t ) (x \wedge y - xy).
\end{equation*}
For all $1 \le i \le k$, all the random elements
$\xi_i, W^{i0}, U_i$ are mutually independent, and they are independent of $ \mathbb{G}$.
\end{proposition}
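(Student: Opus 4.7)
The plan is to extend Proposition~\ref{thm:IS1} to the multivariate setting by combining Theorem~\ref{thm:multi-FCLT} with a componentwise application of the Krichagina--Puhalskii martingale approach from Theorem~3 in \cite{Kri1997}, together with a joint tightness argument across classes. First I would write, for each class $i$, the exact decomposition
\begin{equation*}
\mathbb{Q}_i^{\mu}(t) = \sum_{j=1}^{\mathbb{Q}_i^{\mu}(0)} 1_{\bar\eta_{i,j}>t} + \sum_{j=1}^{N^{\mu,i}(t)} 1_{\tau_{i,j}+\eta_{i,j}>t},
\end{equation*}
and then subtract the fluid centering used in \eqref{eq:X-Q} so that $\mathbb{X}_i^{\mu}$ naturally splits into three fluctuation pieces: the initial-customer piece, the piece driven by the fluctuations of the Hawkes arrival process, and the piece driven by the service-time randomness given the arrivals.

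Second, I would handle each piece separately in $(D([0,\infty),\mathbb{R}),J_1)$. For the initial-customer piece, the classical empirical-process limit under condition \eqref{eq:initial} gives the limit $(1-F_{i0}(t))\xi_i + \sqrt{q_{i0}}\, W^{i0}(F_{i0}(t))$. For the arrival-driven piece, Theorem~\ref{thm:multi-FCLT} gives $(N^{\mu,i}(\cdot)-\mu a_i\cdot)/\sqrt{\mu}\Rightarrow \mathbb{G}_i(\cdot)$ jointly across $i$, and composition with the continuous functional $g\mapsto \int_0^{\cdot}(1-F_i(\cdot-u))dg(u)$ (interpreted via integration by parts, as in Theorem~3 of \cite{Kri1997}) yields $\int_0^t (1-F_i(t-u))d\mathbb{G}_i(u)$. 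For the service-time piece, one uses the Kiefer-process representation of the uniform empirical process indexed by $(s,x)\in\mathbb{R}_+\times[0,1]$, conditionally on the arrival times, which is exactly how \cite{Kri1997} produces the term $\int_0^t\int_0^t 1_{s+x\le t}\,dU_i(a_i s, F_i(x))$. Note that the centering $a_i\cdot\int_0^t(1-F_i(t-u))du$ in \eqref{eq:X-Q} is precisely chosen so that these two fluctuation sources separate cleanly.

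Third, I would handle the joint convergence across classes. Since the service-time sequences $\{\bar\eta_{i,j},\eta_{i,j}\}_{j\ge1}$, the initial random variables, and the arrival process are assumed mutually independent, the Brownian bridges $W^{i0}$ and Kiefer processes $U_i$ across different $i$ are independent, and both are independent of the $k$-dimensional Gaussian limit $\mathbb{G}$. The only cross-class correlation enters through $\mathbb{G}$, and this is captured automatically by the multivariate FCLT Theorem~\ref{thm:multi-FCLT}. Joint tightness in $(D([0,\infty),\mathbb{R}^k),J_1)$ reduces to tightness of each coordinate, which is standard for each of the three fluctuation pieces.

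The main obstacle will be verifying that the proof of Theorem~3 in \cite{Kri1997} goes through in this setting with a non-renewal, non-Markovian arrival process whose FCLT limit is not Brownian. The argument there is based on the martingale decomposition of the number-in-system process and on conditional independence of service and arrival noise, and both of these structures remain valid for our Hawkes input once Theorem~\ref{thm:multi-FCLT} provides the joint Gaussian limit for the arrivals; the integration-by-parts definition of $\int_0^t(1-F_i(t-u))d\mathbb{G}_i(u)$ is well-posed because the paths of $\mathbb{G}_i$ are continuous by Theorem~\ref{thm:multi-FCLT} and $F_i$ is of bounded variation. Once this is checked, the proposition follows by combining the three weak limits and invoking the independence structure noted above.
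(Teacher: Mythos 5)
Your proposal is correct and follows essentially the same route as the paper's proof: the Krichagina--Puhalskii decomposition of each $\mathbb{X}_i^{\mu}$ into initial-customer, arrival-driven, and service-noise pieces, with the joint limit of the arrival-driven terms supplied by Theorem~\ref{thm:multi-FCLT} via the continuous (integration-by-parts) functional, and the joint convergence of the service-noise terms obtained from the assumed independence of the service sequences across classes and from the arrivals. The only step the paper spells out that you gloss over is the replacement of the random time change $\mathbb{N}_i^{(\mu)}(s)/\mu$ inside the Kiefer-process term by the deterministic $a_i s$ (justified by the functional law of large numbers implied by Theorem~\ref{thm:multi-FCLT}), but this is exactly the adaptation of Lemma~5.3 of \cite{Kri1997} that your plan already points to.
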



Note that in the above result, the weak convergence of the component process $\mathbb{X}_i^{\mu}$ to $\mathbb{X}_i$ for each fixed $i$ follows directly from our Theorem~\ref{thm:multi-FCLT} and Theorem~3 in \cite{Kri1997}. However, we still need to show the joint weak convergence of the sequence $(\mathbb{X}_1^{\mu}, \ldots, \mathbb{X}_k^{\mu})$ as $\mu \rightarrow \infty$. We provide a proof in Appendix~\ref{sec:proof-13}.

One can also readily see that the limit process $\mathbb{X}$ in Proposition~\ref{thm:IS-multi-gen} is a $k$-dimensional Gaussian process given its initial state. The covariance function of $\mathbb{X}$ can be computed in a similar manner as we have done in Section~\ref{sec:property} in the one--dimensional case. For notational simplicity and illustration purposes, we study this limiting Gaussian process in detail for the special case of exponential service times in the following section.

\subsection{An example: exponential service times}

In this section, we consider the special case that class $i$ customers have the mean
service requirement $1/r_i$ with $r_i>0$, and the service time distributions are independent exponentials for all $i=1, \ldots, k$. That is,
\begin{equation} \label{eq:exp-multi}
F_i(x)= F_{i0}(x) = 1- e^{-r_i x},  \quad \text{for $x \ge 0$, and for each $i=1, \ldots, k$.}
\end{equation}
Then we immediately obtain the following result from Proposition~\ref{thm:IS-multi-gen} and Part II of Theorem~3 in \cite{Kri1997}. The proof is omitted.

\begin{proposition} \label{thm:IS-mutual}
Suppose Assumption~1 and \eqref{eq:exp-multi} holds. Assume \eqref{eq:initial} with $q_{i0} = a_i/r_i$ for $i=1, \ldots, k$.
Then as $\mu \rightarrow \infty,$ the sequence of processes
$\mathbb{X}^{\mu}$ in \eqref{eq:X-Q} converges in distribution to the process $\mathbb{X} = (\mathbb{X}_1, \ldots, \mathbb{X}_k)$ with continuous sample paths in $(D([0,\infty),\mathbb{R}^k),J_{1})$ and for $i=1, \ldots, k$,
\begin{eqnarray} \label{eq:X-multi}
\mathbb{X}_i(t) = \xi_i - r_i \int_{0}^t \mathbb{X}_i (s) ds + \mathbb{G}_i(t) + \sqrt{a_i} \cdot \mathbb{B}_i (t),
\end{eqnarray}
where $\mathbb{G}= (\mathbb{G}_1, \ldots, \mathbb{G}_k) $ is the mean-zero Gaussian process given in Theorem~\ref{thm:multi-FCLT}, $\mathbb{B}= (\mathbb{B}_1, \ldots, \mathbb{B}_k) $ is a standard $k-$dimensional Brownian motion, and $\mathbb{X}(0)= (\xi_i)_{1 \le i \le k}$, $\mathbb{G}$ and $\mathbb{B}$ are mutually independent.
\end{proposition}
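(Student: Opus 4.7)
The plan is to deduce this proposition as a direct specialization of Proposition~\ref{thm:IS-multi-gen} combined with Part II of Theorem~3 in \cite{Kri1997}. First I would invoke Proposition~\ref{thm:IS-multi-gen} to obtain weak convergence of $\mathbb{X}^{\mu}$ in $(D([0,\infty),\mathbb{R}^k),J_1)$ to the general limit process whose $i$-th component reads
\[
\mathbb{X}_i(t) = (1-F_{i0}(t))\xi_i + \sqrt{q_{i0}}\,W^{i0}(F_{i0}(t)) + \int_0^t (1-F_i(t-u))\,d\mathbb{G}_i(u) - \int_0^t\!\!\int_0^t 1_{s+x\le t}\,dU_i(a_i s, F_i(x)).
\]
Substituting the exponential distributions $F_i(x)=F_{i0}(x)=1-e^{-r_i x}$ and the matched initial scaling $q_{i0}=a_i/r_i$, the deterministic factors collapse to simple exponentials: $(1-F_{i0}(t))\xi_i = e^{-r_i t}\xi_i$ and the kernel against $d\mathbb{G}_i(u)$ becomes $e^{-r_i(t-u)}$.

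Next I would appeal to Part II of Theorem~3 in \cite{Kri1997}, which under exponential service times with the equilibrium initial condition $q_{i0}=a_i/r_i$ shows that the sum of the Brownian-bridge term $\sqrt{a_i/r_i}\,W^{i0}(1-e^{-r_i t})$ and the Kiefer-process double integral above admits the distributional representation $\sqrt{a_i}\,e^{-r_i t}\int_0^t e^{r_i u}\,d\mathbb{B}_i(u)$ for a standard one-dimensional Brownian motion $\mathbb{B}_i$. Combining this identification with the $\mathbb{G}_i$ term yields the stochastic integral representation
\[
\mathbb{X}_i(t) = e^{-r_i t}\xi_i + e^{-r_i t}\int_0^t e^{r_i u}\,d\mathbb{G}_i(u) + \sqrt{a_i}\,e^{-r_i t}\int_0^t e^{r_i u}\,d\mathbb{B}_i(u),
\]
where the integral against the continuous Gaussian process $\mathbb{G}_i$ is defined pathwise through integration by parts, as explained in the remark following Proposition~\ref{thm:IS1}. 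A direct integration-by-parts computation on the two stochastic integrals then recovers the OU-type integral equation \eqref{eq:X-multi}.

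Mutual independence of the $\mathbb{B}_i$ across classes and their joint independence from $\mathbb{G}$ follows from the fact that each $\mathbb{B}_i$ is built from the class-$i$ initial-population pool and class-$i$ service-time sequence, both of which are independent across classes and independent of the multivariate Hawkes arrival process that drives $\mathbb{G}$. Joint weak convergence of the $k$ component processes is already contained in Proposition~\ref{thm:IS-multi-gen}, and continuity of the limiting sample paths is inherited from the continuity of $\mathbb{G}$, $\mathbb{B}$, and the deterministic exponential factors.

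The essentially only technical step is the collapse of the Brownian-bridge plus Kiefer-process contributions into a single Brownian integral, which is precisely the content of Part II of Theorem~3 in \cite{Kri1997} and relies crucially on the matching between the equilibrium initial condition $q_{i0}=a_i/r_i$ and the exponential service-time distribution. Once this identification is invoked, the remainder reduces to routine stochastic calculus and requires no further analysis of the underlying Hawkes structure, which is why the paper omits the proof.
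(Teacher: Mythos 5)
Your proposal is correct and follows exactly the route the paper takes: the paper omits the proof, stating only that the result follows immediately from Proposition~\ref{thm:IS-multi-gen} together with Part~II of Theorem~3 in \cite{Kri1997}, which are precisely the two ingredients you invoke (and your variance bookkeeping for collapsing the Brownian-bridge and Kiefer-process terms into $\sqrt{a_i}\,e^{-r_i t}\int_0^t e^{r_i u}\,d\mathbb{B}_i(u)$ checks out). You have simply filled in the details the authors chose to leave out.
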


Proposition~\ref{thm:IS-mutual} suggests that given $\mathbb{X}(0) \in \mathbb{R}^k$, the limit process $\mathbb{X}$ can be viewed as a $k$-dimensional Gaussian--driven OU process. 
We next provide a characterization of this multi--dimensional Gaussian--driven OU process $\mathbb{X}$ by computing its covariance function and steady--state distribution explicitly.

Given $\mathbb{X}(0) \in \mathbb{R}^k$, for $0 \le s \le t$, we write
\begin{equation*}
\text{Cov}(\mathbb{X}(t), \mathbb{X}(s) ) =(\text{Cov}(\mathbb{X}_{i}(t),\mathbb{X}_{j}(s)))_{1\leq i,j\leq k}.
\end{equation*}
To compute this covariance function, we note from \eqref{eq:X-multi} that for each $i,$
\begin{equation*}
\mathbb{X}_{i}(t)=\mathbb{X}_{i}(0)e^{-r_{i}t}
+\sqrt{a_{i}}e^{-r_{i}t}\int_{0}^{t}e^{r_{i}s}d\mathbb{B}_{i}(s)
+e^{-r_{i}t}\int_{0}^{t}e^{r_{i}s}d\mathbb{G}_{i}(s).
\end{equation*}
Therefore, we can compute that
\begin{align*}
\text{Cov}(\mathbb{X}_{i}(t),\mathbb{X}_{j}(s))
&=\text{Cov}\left(\sqrt{a_{i}}\int_{0}^{t}e^{r_{i}(u-t)}d\mathbb{B}_{i}(u),
\sqrt{a_{j}}\int_{0}^{s}e^{r_{j}(v-s)}d\mathbb{B}_{j}(v)\right)
\\
&\qquad
+\text{Cov}\left(\int_{0}^{t}e^{r_{i}(u-t)}d\mathbb{G}_{i}(u),
\int_{0}^{s}e^{r_{j}(v-s)}d\mathbb{G}_{j}(v)\right).
\end{align*}
We can compute that
\begin{align*}
&\text{Cov}\left(\sqrt{a_{i}}\int_{0}^{t}e^{r_{i}(u-t)}d\mathbb{B}_{i}(u),
\sqrt{a_{j}}\int_{0}^{s}e^{r_{j}(v-s)}d\mathbb{B}_{j}(v)\right)
\\
&=1_{i=j}\cdot a_{i}e^{-r_{i}(t+s)}\text{Var}\left(\int_{0}^{s}e^{r_{i}v}d\mathbb{B}_{i}(v)\right)
=1_{i=j}\cdot \frac{a_{i}}{2r_{i}}[e^{-r_{i}(t-s)}-e^{-r_{i}(t+s)}],
\end{align*}
and similar as in the univariate Hawkes process case,
\begin{align*}
&\text{Cov}\left(\int_{0}^{t}e^{r_{i}(u-t)}d\mathbb{G}_{i}(u),
\int_{0}^{s}e^{r_{j}(v-s)}d\mathbb{G}_{j}(v)\right)
\\
&=
\int_{0}^{s}e^{-r_{i}(t-u)}e^{-r_{j}(s-u)}dK_{ij}(u)
+\int_{0}^{s}\int_{s}^{t}e^{-r_{i}(t-u)}e^{-r_{j}(s-v)}\Phi_{ij}(v-u)dvdu
\\
&=1_{i=j}\cdot a_{i}\int_{0}^{s}e^{-r_{i}(t-u)}e^{-r_{i}(s-u)}du
+
2\int_{0}^{s}\int_{0}^{v}e^{-r_{i}(t-u)}e^{-r_{j}(s-v)}\Phi_{ij}(u-v)dvdu
\\
&\qquad\qquad
+\int_{0}^{s}\int_{s}^{t}e^{-r_{i}(t-u)}e^{-r_{j}(s-v)}\Phi_{ij}(v-u)dvdu
\\
&=1_{i=j}\cdot \frac{a_{i}}{2r_{i}}[e^{-r_{i}(t-s)}-e^{-r_{i}(t+s)}]
+\int_{0}^{s}\int_{0}^{t}e^{-r_{i}(t-u)}e^{-r_{j}(s-v)}\Phi_{ij}(v-u)dvdu,
\end{align*}
by using the definition of $K_{ij}(u)$ in \eqref{var-multi}.

Hence, we conclude that for $1 \le i,j \le k$ and $0 \le s \le t,$
\begin{align}\label{eq:cov-kd}
&\text{Cov}(\mathbb{X}_{i}(t),\mathbb{X}_{j}(s))
\\
&=1_{i=j}\cdot \frac{a_{i}}{r_{i}}[e^{-r_{i}(t-s)}-e^{-r_{i}(t+s)}]
+\int_{0}^{s}\int_{0}^{t}e^{-r_{i}(t-u)}e^{-r_{j}(s-v)}\Phi_{ij}(v-u)dvdu.
\nonumber
\end{align}
In addition, it readily follows from \eqref{eq:cov-kd} that
\begin{equation*}
\text{Cov}(\mathbb{X}_{i}(t),\mathbb{X}_{j}(t))
=1_{i=j}\cdot \frac{a_{i}}{r_{i}}[1-e^{-2r_{i}t}]
+\int_{0}^{t}\int_{0}^{t}e^{-r_{i}u}e^{-r_{j}v}\Phi_{ij}(v-u)dvdu,
\end{equation*}
where $\Phi_{ij}$ is given in \eqref{cov-multi}.
As $t \rightarrow \infty$, the sequence of random vectors $\mathbb{X}(t)$ converges in distribution to a limiting $k-$dimensional Gaussian random vector $\mathbb{X}(\infty)$ which has mean zero and covariance
\begin{equation*}
\text{Cov}(\mathbb{X}_{i}(\infty),\mathbb{X}_{j}(\infty))
=1_{i=j}\cdot \frac{a_{i}}{r_{i}}
+\int_{0}^{\infty}\int_{0}^{\infty}e^{-r_{i}u}e^{-r_{j}v}\Phi_{ij}(v-u)dvdu.
\end{equation*}
Hence, we have obtained the covariance function and the steady--state distribution of the multi--dimensional Gaussian--driven OU process $\mathbb{X}$ in \eqref{eq:X-multi}.

\section*{Acknowledgements}
We are grateful to two anonymous referees, and the Associate Editor
for very careful readings of the manuscript, and helpful suggestions,
that greatly improve the quality of the paper.
We also thank Jim Dai for helpful comments and Junfei Huang for many useful discussions.
Xuefeng Gao acknowledges support from Hong Kong RGC ECS Grant 24207015 and CUHK Direct Grants for Research with project codes 4055035 and 4055054. Lingjiong Zhu is grateful to the support from NSF Grant DMS-1613164.

\clearpage

\appendix

\section{Proofs of results in Section~\ref{sec:2}} \label{app:proof-1}
This section collects the proofs of results in Section~\ref{sec:2}.

\subsection{Proof of Theorem~\ref{thm:FCLT}} \label{sec:proof1}
\begin{proof}[Proof of Theorem~\ref{thm:FCLT}]The proof relies on Hahn's theorem (see Theorem 2 in \cite{Hahn} or Theorem 7.2.1. in \cite{whitt2002}), and delicate estimates of moments of stationary Hawkes processes.

For the sake of simplicity, we first consider that $\mu$ is a positive integer.
By the immigration-birth representation, we can decompose $N^{\mu}$
as the sum of $\mu$ independent and identically distributed (i.i.d) Hawkes processes $N_i^1, i =1, 2,\ldots, \mu$, each
distributed as a stationary Hawkes process with baseline intensity $1$ (the superscript 1 in $N_i^1$) and the exciting function $h(\cdot)$. For notational simplicity, we use $N_i(\cdot)$ for $N_i^1 (\cdot)$.
Therefore, we have
\begin{equation*}
\frac{N^{\mu}(t)-\bar \lambda t}{\sqrt{\mu}}
=\frac{1}{\sqrt{\mu}}\sum_{i=1}^{\mu}\left[N_i ({t})-\frac{t}{1-\Vert h\Vert_{L^{1}}}\right].
\end{equation*}

Let $\tilde{N}_i ({t}):=N_i({t})-\frac{t}{1-\Vert h\Vert_{L^{1}}}$.
Then, $\tilde{N}_i$ are i.i.d. random elements of $D([0,\infty),\mathbb{R})$
with $\mathbb{E}[\tilde{N}_i(t)]=0$ (see e.g. Equation (9) in \cite{Hawkes}) and
$\mathbb{E}[(\tilde{N}_i (t))^{2}]<\infty$ for any $t$ (see e.g. Lemma 2 in \cite{ZhuCLT}\footnote{In Lemma 2 in \cite{ZhuCLT}, it was proved that $\mathbb{E}[(N_{i}(1))^{2}]<\infty$.
By the stationarity of $N_i$ and the Cauchy-Schwarz inequality, for every positive integer $t$,
$\mathbb{E}[(N_{i}(t))^{2}]=\mathbb{E}[(\sum_{j=1}^{t}N_{i}(j-1,j))^{2}]
\leq t\sum_{j=1}^{t}\mathbb{E}[(N_{i}(j-1,j))^{2}]=t^{2}\mathbb{E}[(N_{i}(1))^{2}]<\infty$.}).

By Hahn's theorem, \textit{since $\tilde{N}_{i}$ are i.i.d.},
as $\mu \rightarrow \infty,$ we have
\begin{equation*}
\frac{1}{\sqrt{\mu}}\sum_{i=1}^{\mu}\left[N_i ({t})-\frac{t}{1-\Vert h\Vert_{L^{1}}}\right]
=\frac{1}{\sqrt{\mu}}\sum_{i=1}^{\mu}\tilde{N}_{i}(t)
\Rightarrow G(t),
\end{equation*}
weakly in $(D([0,\infty),\mathbb{R}),J_{1})$, where $G$ is a mean-zero almost surely continuous Gaussian process
with the covariance function of $\tilde{N}_1$ provided that \textit{the following
condition is satisfied}: For every $0<T<\infty$, there exist continuous nondecreasing real-valued
functions $g$ and $f$ on $[0,T]$ with numbers $\alpha>1/2$ and $\beta>1$ such that
\begin{equation}\label{ConI}
\mathbb{E}\left[\left(\tilde{N}_1 ({u}) -\tilde{N}_1 ({s}) \right)^{2}\right]\leq(g(u)-g(s))^{\alpha},
\end{equation}
and \begin{equation}\label{ConII}
\mathbb{E}\left[\left(\tilde{N}_1 ({u}) - \tilde{N}_1 ({t})\right)^{2}
\left(\tilde{N}_1 ({t}) -\tilde{N}_1 ({s})\right)^{2}\right]\leq(f(u)-f(s))^{\beta},
\end{equation}
for all $0\leq s\leq t\leq u\leq T$ with $u-s<1$.

Let us prove \eqref{ConI} and \eqref{ConII}. For notational simplicity, we use $N_1(a,b]$ to stand for $N_1((a,b])$ (equivalently, $N_1(b)-N_1(a)$) which records the number of points of the process $N_1$ in the interval $(a,b]$. We also use $\lambda_1$ to denote the intensity process of the stationary Hawkes process $N_1$ with baseline intensity 1. We now present a lemma which is the key to the proofs of \eqref{ConI} and \eqref{ConII}. The proof is given at the end of this section.

\begin{lemma} \label{lemma:1}
We have
\begin{equation} \label{eq:intensity-moment}
\mathbb{E}[(\lambda_1(0))^4] < \infty.
\end{equation}
As a result, for all $0\leq s\leq t\leq u\leq T$ and $u-s<1$, there are some constants $c, C>0$ independent of $s, t, u,$ such that
\begin{eqnarray}
\mathbb{E}\left[(N_1(s,u])^{2}\right]
&\leq & C \cdot (u-s), \label{eq:inter-step0} \\
\mathbb{E}\left[(N_1(t,u])^{2}(N_1(s,t])^{2}\right] &\le& c \cdot (u-s)^2, \label{eq:inter-step}
\end{eqnarray}
\end{lemma}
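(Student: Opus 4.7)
The plan is to prove the three claims in the stated order, so that each can rely on the previous.

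For part (i), $\mathbb{E}[\lambda_{1}(0)^{4}]<\infty$, I would exploit the Poisson cluster representation of Section~\ref{ImmigrationSection}. Under Assumption~\ref{assump1}, the stationary Hawkes process $N_{1}$ is a Poisson cluster process on $\mathbb{R}$ with immigration rate $1$, each cluster being a Galton--Watson tree whose offspring distribution is $\mathrm{Poisson}(\|h\|_{L^{1}})$; since $\|h\|_{L^{1}}<1$, every moment of the cluster size is finite. Writing $\lambda_{1}(0)=1+\sum_{j}\sum_{x\in C_{j},\,x<0}h(-x)$ as a functional of the Poisson cluster points and applying Campbell's formula for the factorial moments of the Poisson immigration process, one expresses $\mathbb{E}[\lambda_{1}(0)^{4}]$ as a finite sum of integrals of products of $h$ weighted against finite moments of the cluster configuration; Assumption~\ref{assump1} guarantees convergence of all these integrals. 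Alternatively, one can show directly via the same cluster argument that the moment generating function of $\lambda_{1}(0)$ is finite in a neighborhood of the origin, which immediately implies finite fourth moment.

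For part (ii), I use the martingale decomposition
\begin{equation*}
N_{1}(s,u]=M_{1}(s,u]+\int_{s}^{u}\lambda_{1}(r)\,dr,
\end{equation*}
where $M_{1}(t):=N_{1}(t)-\int_{0}^{t}\lambda_{1}(r)\,dr$ is a martingale with predictable quadratic variation $\int_{0}^{t}\lambda_{1}$, by property (b) of Section~\ref{sec:hawkes}. The elementary inequality $(a+b)^{2}\le 2a^{2}+2b^{2}$, the identity $\mathbb{E}[M_{1}(s,u]^{2}]=\bar\lambda(u-s)$, Cauchy--Schwarz, and stationarity yield
\begin{equation*}
\mathbb{E}[N_{1}(s,u]^{2}]\le 2\bar\lambda(u-s)+2(u-s)^{2}\mathbb{E}[\lambda_{1}(0)^{2}].
\end{equation*}
Since $u-s<1$ and $\mathbb{E}[\lambda_{1}(0)^{2}]<\infty$ by part (i), this is bounded by $C(u-s)$.

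Part (iii) is the real obstacle. A naive application of Cauchy--Schwarz would yield only $\sqrt{\mathbb{E}[N_{1}(s,t]^{4}]\mathbb{E}[N_{1}(t,u]^{4}]}=O(\sqrt{(t-s)(u-t)})$, which is linear rather than quadratic in $u-s$ (as can be seen already in the Poisson limit, where $\mathbb{E}[N_{1}(a,b]^{4}]$ is only $O(b-a)$ for small $b-a$). The plan is instead to expand $N_{1}(s,t]^{2}N_{1}(t,u]^{2}$ by applying $N^{2}=N+N(N-1)$ to each factor, producing four pieces whose expectations are (factorial) moments of $N_{1}$ expressible as integrals of the $k$-th order product density $\rho^{(k)}$ of $N_{1}$ over products of $(s,t]$ and $(t,u]$, for $k=2,3,4$. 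Under Assumption~\ref{assump1} combined with $\mathbb{E}[\lambda_{1}(0)^{4}]<\infty$ from part (i), one shows that the product densities $\rho^{(k)}$ are uniformly bounded on the compact domain $[0,T]^{k}$ for $k\le 4$; this follows either from the Volterra-type integral equations satisfied by the $\rho^{(k)}$ (extending \eqref{eq:cov-density} and \eqref{eq:phi} to higher orders) or from the cluster representation combined with Hölder's inequality. The four resulting terms are then bounded respectively by $C(t-s)(u-t)$, $C(t-s)(u-t)^{2}$, $C(t-s)^{2}(u-t)$ and $C(t-s)^{2}(u-t)^{2}$. Since $(t-s)(u-t)\le(u-s)^{2}/4$ by AM--GM and $u-s<1$, each of these is $O((u-s)^{2})$, and summing gives $\mathbb{E}[N_{1}(s,t]^{2}N_{1}(t,u]^{2}]\le c(u-s)^{2}$. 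The main subtlety is verifying uniform boundedness of $\rho^{(k)}$ for $k\le 4$, which is precisely where part (i) is used.
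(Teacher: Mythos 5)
Your parts (i) and (ii) are fine: part (ii) is essentially identical to the paper's argument (martingale decomposition, quadratic variation, Cauchy--Schwarz, stationarity), and your cluster/Campbell route to $\mathbb{E}[(\lambda_1(0))^4]<\infty$ is a legitimate alternative to the paper's argument, which instead discretizes $\int_{-\infty}^0 h(-s)N_1(ds)$ into a convergent weighted sum of increments $N_1[-(i+1)\delta,-i\delta]$, applies Jensen with the normalized weights, and invokes the known exponential moment $\mathbb{E}[e^{\theta N_1[0,1]}]<\infty$. (In either route the real content of Assumption~\ref{assump1} being used is that the upper Riemann sums of $h$ converge, which forces $h$ to be bounded; your phrase ``Assumption~\ref{assump1} guarantees convergence of all these integrals'' is hiding exactly this point, since terms like $\int_0^\infty h(t)^4\,dt$ appear in the Campbell expansion and are not finite for a merely locally bounded $L^1$ function.)

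For part (iii) you take a genuinely different route from the paper, and your diagnosis that naive Cauchy--Schwarz loses a factor is exactly right. The paper's fix is to exploit the filtration: since $N_1(s,t]$ is $\mathcal{F}_t$-measurable and $\left(N_1(t,u]-\int_t^u\lambda_1\right)^2-\int_t^u\lambda_1$ is a martingale, one gets $\mathbb{E}\left[\left(N_1(t,u]-\int_t^u\lambda_1\right)^2(N_1(s,t])^2\right]=\mathbb{E}\left[\int_t^u\lambda_1\cdot(N_1(s,t])^2\right]$, after which Cauchy--Schwarz plus a Burkholder--Davis--Gundy bound $\mathbb{E}[(N_1(s,t])^4]=O((t-s)^2)$ closes the argument using only $\mathbb{E}[(\lambda_1(0))^4]<\infty$. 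Your fix instead expands $N^2=N+N(N-1)$ and integrates the factorial moment densities $\rho^{(k)}$, $k=2,3,4$, over products of the disjoint intervals; the combinatorics and the final $O((u-s)^2)$ bookkeeping are correct \emph{conditional on} uniform boundedness of $\rho^{(k)}$ on $[0,T]^k$ for $k\le 4$. That is the gap: this boundedness is asserted, not proved, and it does not follow from part (i) as you claim --- finiteness of $\mathbb{E}[(\lambda_1(0))^4]$ gives local finiteness of the moment measures, not boundedness of their densities. Establishing it is a separate, nontrivial lemma (via the cluster expansion of the cumulant densities as in \cite{JHR}, or via higher-order analogues of the integral equation \eqref{eq:phi}, in both cases using boundedness of $h$ and $\Vert h\Vert_{L^1}<1$). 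So your proposal is a viable alternative architecture, but as written its pivotal technical input is missing, whereas the paper's martingale/BDG argument needs nothing beyond the fourth moment of the intensity already secured in part (i).
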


With Lemma~\ref{lemma:1}, we are ready to prove \eqref{ConI} and \eqref{ConII}.
First, let us prove \eqref{ConI}. It is clear from the definition of $\tilde N_1$ that
\begin{align*}
\mathbb{E}\left[\left(\tilde{N}_1 ({u}) -\tilde{N}_1 ({s}) \right)^{2}\right]
&=\mathbb{E}\left[\left(N_1(s,u]-\frac{u-s}{1-\Vert h\Vert_{L^{1}}}\right)^{2}\right]
\\
&\leq\mathbb{E}\left[\left(N_1(s,u]\right)^{2}\right]
+\left(\frac{u-s}{1-\Vert h\Vert_{L^{1}}}\right)^{2}.
\nonumber
\end{align*}
Using \eqref{eq:inter-step0} in Lemma~\ref{lemma:1} and the fact that $0  \le u-s<1$,
we immediately obtain that
\eqref{ConI} is satisfied with $g(x)=\left(C + \frac{1}{(1- \Vert h\Vert_{L^{1}})^2} \right) x$ and $\alpha=1.$

Next, let us prove \eqref{ConII}. Note that
\begin{align} \label{eq:conII}
&\mathbb{E}\left[\left(\tilde{N}_1 ({u}) - \tilde{N}_1 ({t})\right)^{2}
\left(\tilde{N}_1 ({t}) -\tilde{N}_1 ({s})\right)^{2}\right]
\\
&=\mathbb{E}\left[\left(N_1(t,u]-\frac{u-t}{1-\Vert h\Vert_{L^{1}}}\right)^{2}
\left(N_1[s,t]-\frac{t-s}{1-\Vert h\Vert_{L^{1}}}\right)^{2}\right]
\nonumber
\\
&\leq
\mathbb{E}\left[\left((N_1 (t,u])^{2}+\left(\frac{u-t}{1-\Vert h\Vert_{L^{1}}}\right)^{2}\right)
\left((N_1 (s,t])^{2}+\left(\frac{t-s}{1-\Vert h\Vert_{L^{1}}}\right)^{2}\right)\right]
\nonumber
\\
&=
\left(\frac{u-t}{1-\Vert h\Vert_{L^{1}}}\right)^{2}\mathbb{E}\left[(N_1(s,t])^{2}\right]
+\left(\frac{t-s}{1-\Vert h\Vert_{L^{1}}}\right)^{2}\mathbb{E}\left[(N_1(t,u])^{2}\right]
\nonumber
\\
&\qquad\qquad
+\left(\frac{u-t}{1-\Vert h\Vert_{L^{1}}}\right)^{2}\left(\frac{t-s}{1-\Vert h\Vert_{L^{1}}}\right)^{2}
+\mathbb{E}\left[(N_1 (s,t])^{2}(N_1(t,u])^{2}\right].
\nonumber
\end{align}
Since $0\leq s\leq t\leq u\leq T$ and $u-s<1$, we can then infer from Lemma~\ref{lemma:1} and \eqref{eq:conII} that \eqref{ConII} is satisfied with $f(x)=C'x$ for some positive constant $C'$ (independent of $u, s , t$) and $\beta=2$.

Now we have proved Theorem~\ref{thm:FCLT} by assuming $\mu$ is a positive integer in our discussions.
The same result holds when $\mu\in (0, \infty)$ for $\mu\rightarrow\infty$. Note that for $\mu\in(0, \infty)$, by the immigration-birth representation,
the process $N^{\mu}(\cdot)$ can be decomposed as the sum of two independent stationary Hawkes processes $N^{\lfloor\mu\rfloor}(\cdot)$
and $N^{\mu-\lfloor\mu\rfloor}(\cdot)$, where the superscripts $\lfloor\mu\rfloor, \mu-\lfloor\mu\rfloor$ represent the baseline intensities, respectively. Hence,
to show the result holds for $\mu\in (0, \infty)$ with $\mu \rightarrow\infty$, it suffices to show that for any $T>0$,
\begin{equation*}
\sup_{0\leq t\leq T}\frac{N^{\mu-\lfloor\mu\rfloor}(t)-\frac{(\mu-\lfloor\mu\rfloor)t}{1-\Vert h\Vert_{L^{1}}}}{\sqrt{\mu}}\rightarrow 0,
\end{equation*}
in probability as $\mu\rightarrow\infty$. This can be easily verified since
for any $\epsilon>0$, for sufficiently large $\mu$, we have $\sup_{0\leq t\leq T}\frac{1}{\sqrt{\mu}}
\frac{(\mu-\lfloor\mu\rfloor)t}{1-\Vert h\Vert_{L^{1}}}
\leq\frac{1}{\sqrt{\mu}}\frac{T}{1-\Vert h\Vert_{L^{1}}}\leq\frac{\epsilon}{2}$, and
\begin{align*}
\mathbb{P}\left(\left|\sup_{0\leq t\leq T}\frac{N^{\mu-\lfloor\mu\rfloor}(t)-\frac{(\mu-\lfloor\mu\rfloor)t}{1-\Vert h\Vert_{L^{1}}}}{\sqrt{\mu}}\right|\geq\epsilon\right)
&\leq
\mathbb{P}\left(\left|\sup_{0\leq t\leq T}\frac{N^{\mu-\lfloor\mu\rfloor}(t)}{\sqrt{\mu}}\right|\geq\frac{\epsilon}{2}\right)
\\
&=\mathbb{P}\left(N^{\mu-\lfloor\mu\rfloor}(T)\geq\frac{1}{2}\sqrt{\mu}\epsilon\right)
\nonumber
\\
& \leq\mathbb{P}\left(N^{1}(T)\geq\frac{1}{2}\sqrt{\mu}\epsilon\right)\leq\frac{2}{\sqrt{\mu}\epsilon}T \cdot \mathbb{E}[N^{1}(1)]\rightarrow 0,
\nonumber
\end{align*}
as $\mu\rightarrow\infty$.
Here $N^1$ denotes a stationary Hawkes process with a baseline intensity one and an exciting function $h$.

Finally, let us compute the covariance function of $G$, or equivalently (from Hahn's Theorem), the covariance function of $\tilde{N}_1(\cdot)$.
Since $\tilde{N}_1 (t)$ and $\tilde{N}_1 (s)$ have mean zero,
we can compute that, for any $t>s$,
\begin{align}\label{CovCompute1}
\mbox{Cov}(\tilde{N}_1(t),\tilde{N}_1(s))
&=\mathbb{E}\left[\tilde{N}_1(t) \tilde{N}_1(s)\right]
\\
&=\mathbb{E}\left[\left(N_1(t)-\frac{t}{1-\Vert h\Vert_{L^{1}}}\right)
\left(N_1(s)-\frac{s}{1-\Vert h\Vert_{L^{1}}}\right)\right]
\nonumber
\\
&=\mathbb{E}[N_1 (t) N_1(s)]
-\frac{ts}{(1-\Vert h\Vert_{L^{1}})^{2}}
\nonumber
\\
&=\mathbb{E}[(N_1(t)-N_1(s)) N_1(s)]
+\mathbb{E}[(N_1(s))^{2}]-\frac{ts}{(1-\Vert h\Vert_{L^{1}})^{2}}.
\nonumber
\end{align}
It is clear that
\begin{equation*}\label{CovCompute2}
\mathbb{E}[(N_1(s))^{2}]
=\mbox{Var}(N_1(s))+
\frac{s^{2}}{(1-\Vert h\Vert_{L^{1}})^{2}}
=K(s)+\frac{s^{2}}{(1-\Vert h\Vert_{L^{1}})^{2}}.
\end{equation*}
In addition, we can verify that
\begin{eqnarray*}\label{CovCompute3}
\mathbb{E}[(N_1(t)-N_1(s))N_1(s)]
&=&\mathbb{E}\left[\int_{s+}^{t}N_1(du)\int_{0}^{s}N_1(dv)\right] =\int_{s+}^{t}\int_{0}^{s}\mathbb{E}[N_1(dv) N_1(du)]
\nonumber
\\
&=&\int_{s}^{t}\int_{0}^{s}\left[\phi(u-v)+\frac{1}{(1-\Vert h\Vert_{L^{1}})^{2}}\right]dvdu
\nonumber
\\
&=&\int_{s}^{t}\int_{0}^{s}\phi(u-v)dvdu+\frac{s(t-s)}{(1-\Vert h\Vert_{L^{1}})^{2}}.
\end{eqnarray*}
Hence, we get
\begin{align}\label{CovCompute4}
& \mbox{Cov}(G(t),G(s)) = \mbox{Cov}(\tilde{N}_1(t),\tilde{N}_1(s))
\\
&=\int_{s}^{t}\int_{0}^{s}\phi(u-v)dudv+\frac{s(t-s)}{(1-\Vert h\Vert_{L^{1}})^{2}}
+K(s)+\frac{s^{2}}{(1-\Vert h\Vert_{L^{1}})^{2}}-\frac{ts}{(1-\Vert h\Vert_{L^{1}})^{2}}
\nonumber
\\
&=\int_{s}^{t}\int_{0}^{s}\phi(u-v)dudv+K(s).
\nonumber
\end{align}
The proof is therefore complete.
\end{proof}

\begin{proof}[Proof of Lemma~\ref{lemma:1}]
We first prove \eqref{eq:intensity-moment}.  Using the definition of the intensity $\lambda_1$ and the simple inequality $\left(\frac{x+y}{2}\right)^4 \le \frac{x^4 + y^4}{2}$, we obtain that for sufficiently small $\delta>0$,
\begin{eqnarray}
\mathbb{E}\left[(\lambda_{1}(0))^{4}\right]
&=&\mathbb{E}\left[\left(1+\int_{-\infty}^{0}h(-s)N_1(ds)\right)^{4}\right]
\nonumber
\\
&\leq&
8+8\mathbb{E}\left[\left(\int_{-\infty}^{0}h(-s)N_1(ds)\right)^{4}\right]
\nonumber\\
&\leq&
8+8\mathbb{E}\left[\left(\sum_{i=0}^{\infty}\max_{t\in[-(i+1)\delta,-i\delta]}h(-t)\cdot N_1[-(i+1)\delta,-i\delta]\right)^{4}\right] \nonumber \\
& = &
8+8\left(\sum_{i=0}^{\infty}\max_{t\in[-(i+1)\delta,-i\delta]}h(-t)\right)^{4} \times \nonumber \\
&&\mathbb{E}\left[\left(\sum_{i=0}^{\infty}\frac{\max_{t\in[-(i+1)\delta,-i\delta]}h(t)}{\sum_{i=0}^{\infty}\max_{t\in[-(i+1)\delta,-i\delta]}h(t)}N_1[-(i+1)\delta,-i\delta]\right)^{4}\right].
\label{eq:tempstep}
\end{eqnarray}
Note here that under Assumption~1, we know $h(\cdot)$ is locally bounded and Riemann integrable, hence for sufficiently small $\delta>0$,
\begin{equation} \label{eq:h-sumbound}
\sum_{i=0}^{\infty}\max_{t\in[-(i+1)\delta,-i\delta]}h(-t) < \infty.
\end{equation}
Applying the Jensen's inequality to \eqref{eq:tempstep},
we get
\begin{align}\label{eq:3}
&\mathbb{E}\left[(\lambda_{1}(0))^{4}\right]
\\
&\leq
8+8\left(\sum_{i=0}^{\infty}\max_{t\in[-(i+1)\delta,-i\delta]}h(-t)\right)^{4}
\mathbb{E}\left[\sum_{i=0}^{\infty}\frac{\max_{t\in[-(i+1)\delta,-i\delta]}h(t)}{\sum_{i=0}^{\infty}\max_{t\in[-(i+1)\delta,-i\delta]}h(t)}\left(N_1[-(i+1)\delta,-i\delta]\right)^{4}\right]
\nonumber
\\
&=
8+8\left(\sum_{i=0}^{\infty}\max_{t\in[-(i+1)\delta,-i\delta]}h(-t)\right)^{4}
\mathbb{E}\left[\left(N_1[0,\delta]\right)^{4}\right]<\infty,
\nonumber
\end{align}
where we have used the stationarity of $N_1$, \eqref{eq:h-sumbound} and the fact that $\mathbb{E}[e^{\theta N_1[0,1]}]<\infty$
for sufficiently small $\theta>0$, see e.g. \cite{ZhuCLT}.

We next prove \eqref{eq:inter-step0}. We can directly compute that
\begin{align}\label{C1estimate}
\mathbb{E}\left[\left(N_1(s,u]\right)^{2}\right]
&\leq 2\mathbb{E}\left[\left(N_{1}(s,u]-\int_{s}^{u}\lambda_{1}(v)dv\right)^{2}\right]
+2\mathbb{E}\left[\left(\int_{s}^{u}\lambda_{1}(v)dv\right)^{2}\right]
\\
&=
2\mathbb{E}\left[\int_{s}^{u}\lambda_{1}(v)dv\right]
+2\mathbb{E}\left[\left(\int_{s}^{u}\lambda_{1}(v)dv\right)^{2}\right]
\nonumber
\\
&=2\frac{(u-s)}{1-\Vert h\Vert_{L^{1}}}
+2\mathbb{E}\left[\left(\int_{s}^{u}\lambda_{1}(v)dv\right)^{2}\right]
\nonumber
\\
&\leq
2\frac{(u-s)}{1-\Vert h\Vert_{L^{1}}}
+2(u-s)\mathbb{E}\left[\left(\int_{s}^{u}(\lambda_{1}(v))^{2}dv\right)\right]
\nonumber
\\
&=2\frac{(u-s)}{1-\Vert h\Vert_{L^{1}}}
+2(u-s)^{2}\mathbb{E}[(\lambda_{1}(0))^{2}]\nonumber \\ \
&\leq C (u-s),
\nonumber
\end{align}
for some positive constant $C$.
Here, the second line follows from the martingale property,
see Section \ref{sec:hawkes};
the third line follows from the stationarity of the intensity process
and $\mathbb{E}[\lambda_{1}(0)]=\frac{1}{1-\Vert h\Vert_{L^{1}}}$;
the fourth line follows from the Cauchy-Schwarz inequality so that
$\left(\int_{s}^{u}\lambda_1(v)dv\right)^{2}\leq\int_{s}^{u}1dv\cdot\int_{s}^{u}(\lambda_{1}(v))^{2}dv$; the fifth line
is due to the fact that $\lambda_1$ is a stationary process; and the last inequality is due to \eqref{eq:intensity-moment} and the fact that $0<u-s<1$.

Finally, we prove \eqref{eq:inter-step}.
We can directly compute that
\begin{align} \label{eq:1}
&\mathbb{E}\left[(N_1(t,u])^{2}(N_1(s,t])^{2}\right]
\\
&\leq 2\mathbb{E}\left[\left(N_1(t,u]-\int_{t}^{u}\lambda_1 (v)dv\right)^{2}(N_1(s,t])^{2}\right]
+2\mathbb{E}\left[\left(\int_{t}^{u}\lambda_1(v)dv\right)^{2}(N_1(s,t])^{2}\right]
\nonumber
\\
&\leq 2\mathbb{E}\left[\int_{t}^{u}\lambda_1 (v)dv\cdot (N_1 (s,t])^{2}\right]
+2(u-t)\mathbb{E}\left[\int_{t}^{u}(\lambda_1 (v))^{2}dv\cdot (N_1 (s,t])^{2}\right],
\nonumber
\end{align}
where we use the fact that $N_1(t,u]-\int_{t}^{u}\lambda_1 (v)dv$
is $\mathcal{F}_{t}$-measurable
and is a martingale with predictable quadratic variation $\int_{t}^{u}\lambda_1 (v)dv$
so that $\left(N_1(t,u]-\int_{t}^{u}\lambda_1 (v)dv\right)^{2}-\int_{t}^{u}\lambda_1 (v)dv$
is also a martingale (see Section \ref{sec:hawkes})
and the Cauchy-Schwarz inequality so that
$\left(\int_{t}^{u}\lambda_1(v)dv\right)^{2}\leq\int_{t}^{u}1dv\cdot\int_{t}^{u}(\lambda_{1}(v))^{2}dv$.
From here, we can further estimate that
\begin{align}\label{eq:221}
&\mathbb{E}\left[(N_1(t,u])^{2}(N_1(s,t])^{2}\right]
\\
&\leq 2\left(\mathbb{E}\left[\left(\int_{t}^{u}\lambda_1 (v) dv\right)^{2}\right]\right)^{1/2}
\left(\mathbb{E}\left[(N_1(s,t])^{4}\right]\right)^{1/2}
\nonumber
\\
&\qquad\qquad
+(u-t)\mathbb{E}\left[\int_{t}^{u}\left((\lambda_1 (v))^{4}+(N_1 (s,t])^{4}\right)dv\right],
\nonumber
\end{align}
where we applied Cauchy-Schwarz inequality to the first term
in the last line in \eqref{eq:1}
so that $\mathbb{E}\left[\int_{t}^{u}\lambda_1 (v)dv\cdot (N_1 (s,t])^{2}\right]
\leq\left(\mathbb{E}\left[\left(\int_{t}^{u}\lambda_1 (v) dv\right)^{2}\right]\right)^{1/2}
\left(\mathbb{E}\left[(N_1(s,t])^{4}\right]\right)^{1/2}$,
and the simple inequality $2(\lambda_1 (v))^{2}(N_1 (s,t])^{2}\leq(\lambda_1 (v))^{4}+(N_1 (s,t])^{4}$
to the second term in the last line in \eqref{eq:1}.
From here, we can continue that
\begin{align}\label{22}
&\mathbb{E}\left[(N_1(t,u])^{2}(N_1(s,t])^{2}\right]
\\
&\leq 2(u-t)^{1/2}\left(\left[\int_{t}^{u}\mathbb{E}[(\lambda_1(v))^{2}]dv\right]\right)^{1/2}
\left(\mathbb{E}\left[(N_1 (s,t])^{4}\right]\right)^{1/2}
\nonumber
\\
&\qquad\qquad
+(u-t)\left[\int_{t}^{u}\left(\mathbb{E}(\lambda_1(v))^{4}+\mathbb{E}(N_1 (s,t])^{4}\right)dv\right]
\nonumber
\\
&=
2(u-t)\left(\mathbb{E}[(\lambda_1(0))^{2}]\right)^{1/2}
\left(\mathbb{E}\left[(N_1(s,t])^{4}\right]\right)^{1/2}
\nonumber
\\
&\qquad\qquad
+(u-t)^{2}\mathbb{E}\left[(\lambda_1(0))^{4}\right]
+(u-t)^{2}\mathbb{E}\left[(N_1(s,t])^{4}\right],
\nonumber
\end{align}
where we used Cauchy-Schwartz inequality $\left(\int_{t}^{u}\lambda_1(v)dv\right)^{2}\leq\int_{t}^{u}1dv\cdot\int_{t}^{u}(\lambda_{1}(v))^{2}dv$,
and the stationarity of the Hawkes processes,
see Section \ref{sec:hawkes}.

Hence, to bound $\mathbb{E}\left[(N_1(t,u])^{2}(N_1(s,t])^{2}\right]$, we need to estimate $\mathbb{E}\left[(N_1 (s,t])^{4}\right]$.
We can compute that (explanations follow below)
\begin{align}\label{eq:2}
\mathbb{E}\left[(N_1(s,t])^{4}\right]
&\leq 8\mathbb{E}\left[\left(N_1 (s,t]-\int_{s}^{t}\lambda_1 (v) dv\right)^{4}\right]
+8\mathbb{E}\left[\left(\int_{s}^{t}\lambda_1(v)dv\right)^{4}\right]
\\
&\leq
8\bar{C}\mathbb{E}\left[\left(\int_{s}^{t}\lambda_1(v)dv\right)^{2}\right]
+8\mathbb{E}\left[\left(\int_{s}^{t}\lambda_1(v) dv\right)^{4}\right]
\nonumber
\\
&\leq
8\bar{C}(t-s)\mathbb{E}\left[\int_{s}^{t}(\lambda_1(v))^{2}dv\right]
+8(t-s)^{3}\mathbb{E}\left[\int_{s}^{t}(\lambda_1(v))^{4}dv\right]
\nonumber
\\
&=
8\bar{C}(t-s)^{2}\mathbb{E}\left[(\lambda_1(0))^{2}\right]
+8(t-s)^{4}\mathbb{E}\left[(\lambda_1(0))^{4}\right].
\nonumber
\end{align}
The first inequality in \eqref{eq:2} uses the inequality $(\frac{x+y}{2})^{4}\leq\frac{x^{4}+y^{4}}{2}$.
The second inequality in \eqref{eq:2} uses the martingality of $N_1 [s,t]-\int_{s}^{t}\lambda_1 (v) dv$
with the predictable quadratic variation $\int_{s}^{t}\lambda_1 (v) dv$ and the Burkholder-Davis-Gundy inequality,
where $\bar{C}>0$ is a constant from the Burkholder-Davis-Gundy inequality
\footnote{The Burkholder-Davis-Gundy inequality reads that
for a local martingale $M_{t}$ starting at $0$ at $t=0$, and $M_{t}^{\ast}:=\sup_{0\leq s\leq t}|M_{s}|$, we have
$c_{p}\mathbb{E}[\langle M\rangle_{t}^{p/2}]
\leq\mathbb{E}[(M_{t}^{\ast})^{p}]
\leq C_{p}\mathbb{E}[\langle M\rangle_{t}^{p/2}]$, for some constant
$c_{p}<C_{p}$ depending on $p\geq 1$ only
and $\langle M\rangle_{t}$ is the (predictable) quadratic variation of $M_{t}$.
As a corollary, we have $\mathbb{E}[|M_{t}|^{p}]\leq C_{p}\mathbb{E}[\langle M\rangle_{t}^{p/2}]$.
In our application, $p=4$ and $M_{t}=N_{1}[s,t]-\int_{s}^{t}\lambda_{1}(v)dv$, $t\geq s$
so that $M_{t}$ is a martingale with $M_{s}=0$ and the predictable
quadratic variation $\int_{s}^{t}\lambda_{1}(v)dv$.}.
The third inequality in \eqref{eq:2} uses Jensen's inequality so that
\begin{equation*}
\left(\frac{1}{t-s}\int_{s}^{t}\lambda_1(v) dv\right)^{2}\leq\frac{1}{t-s}\int_{s}^{t}(\lambda_1(v))^{2}dv,
\end{equation*}
and
\begin{equation*}
\left(\frac{1}{t-s}\int_{s}^{t}\lambda_1(v) dv\right)^{4}\leq\frac{1}{t-s}\int_{s}^{t}(\lambda_1(v))^{4}dv.
\end{equation*}
Finally, the last equality in \eqref{eq:2} is due to stationarity of the intensity process,
see Section~\ref{sec:hawkes}.

Combining \eqref{22}, \eqref{eq:2} and \eqref{eq:intensity-moment}, we deduce that \eqref{eq:inter-step} holds. The proof is thus complete.
\end{proof}

\subsection{Proof of Proposition~\ref{prop:Kt}}\label{sec:prop-Kt}

\begin{proof}[Proof of Proposition~\ref{prop:Kt}]
We first prove Part (b), then prove Parts (a) and (c).

To prove Part (b), we recall that
\begin{equation}\label{integratephi}
\phi(t)=\frac{h(t)}{1-\Vert h\Vert_{L^{1}}}
+\int_{0}^{\infty}h(t+v)\phi(v)dv+\int_{0}^{t}h(t-v)\phi(v)dv.
\end{equation}
Denote $H(t):=\int_t^{\infty}h(s)ds$.
By integrating Equation \eqref{integratephi} at both sides, we get
\begin{align*}
\Vert\phi\Vert_{L^{1}}
&=\frac{\Vert h\Vert_{L^{1}}}{1-\Vert h\Vert_{L^{1}}}
+\int_{0}^{\infty}H(v)\phi(v)dv
+\Vert h\Vert_{L^{1}}\Vert\phi\Vert_{L^{1}}
\\
&=\frac{\Vert h\Vert_{L^{1}}}{1-\Vert h\Vert_{L^{1}}}
+\int_{0}^{M}H(v)\phi(v)dv
+\int_{M}^{\infty}H(v)\phi(v)dv
+\Vert h\Vert_{L^{1}}\Vert\phi\Vert_{L^{1}}.
\nonumber
\end{align*}
Since $\Vert h\Vert_{L^{1}}<1$, there exists some $\epsilon>0$
so that $\Vert h\Vert_{L^{1}}+\epsilon<1$. In addition,
note that $H(M)$ is decreasing in $M$ to $0$ as $M\rightarrow\infty$.
Hence, for sufficiently large $M$ we have $H(M)\leq\epsilon$. This implies
\begin{align*}
\Vert\phi\Vert_{L^{1}}
&\leq\frac{\Vert h\Vert_{L^{1}}}{1-\Vert h\Vert_{L^{1}}}
+H(0)\int_{0}^{M}\phi(v)dv
+\epsilon\Vert\phi\Vert_{L^{1}}
+\Vert h\Vert_{L^{1}}\Vert\phi\Vert_{L^{1}}.
\nonumber
\end{align*}
Note that $\text{Var}(N^{1}(t))<\infty$ (See e.g. Lemma 2 in \cite{ZhuCLT})
and hence
\begin{equation}
K(t)=\frac{t}{1-\Vert h\Vert_{L^{1}}}+2\int_{0}^{t}\int_{0}^{t_{2}}\phi(t_{2}-t_{1})dt_{1}dt_{2}<\infty.
\end{equation}
Moreover, $\int_{0}^{t_{2}}\phi(t_{2}-t_{1})dt_{1}=\int_{0}^{t_{2}}\phi(t_{1})dt_{1}$
is non-decreasing in $t_{2}$, and $\int_{0}^{t}\int_{0}^{t_{2}}\phi(t_{1})dt_{1}dt_{2}<\infty$
for every $t$. It implies that $\int_{0}^{t_{2}}\phi(t_{1})dt_{1}<\infty$ for every $t_{2}$.
Thus, $\int_{0}^{M}\phi(v)dv<\infty$.
Hence, it follows that
\begin{equation} \label{eq:phi-int}
\Vert\phi\Vert_{L^{1}}
\leq\frac{\Vert h\Vert_{L^{1}}}{(1-\Vert h\Vert_{L^{1}})(1-\Vert h\Vert_{L^{1}}-\epsilon)}
+\frac{H(0)\int_{0}^{M}\phi(v)dv}{1-\Vert h\Vert_{L^{1}}-\epsilon}<\infty.
\end{equation}

To establish the second part of Part (b), we first note from the definition of $K(t)$ in \eqref{eq:var} that
\begin{equation*}
K'(t)=\frac{1}{1-\Vert h\Vert_{L^{1}}}+2\int_{0}^{t}\phi(t-t_{1})dt_{1}.
\end{equation*}
The differentiability of $K(\cdot)$ is due to the integrability of $\phi$ as given in \eqref{eq:phi-int}. Since $\phi$ is nonnegative, it follows that $K'(\cdot)$ is non-decreasing. Hence, $K(\cdot)$ is convex. In addition, we note that $\int_{0}^{t}\phi(t-t_{1})dt_{1}=\int_{0}^{t}\phi(t_{1})dt_{1} \le \Vert \phi \Vert_{L^{1}}< \infty$ for all $t$.
Thus $K(\cdot)$ is Lipschitz continuous.

We next provide a proof of Part (a) which will be useful in proving Part (c). Write $N^1$ for a stationary Hawkes process with baseline intensity $\mu=1$.
From the Bartlett spectrum for the stationary Hawkes process (see \cite{Hawkes} or \cite{Daley}), we know that
\begin{equation*}
\mbox{Var}\left(\int_{\mathbb{R}}\psi(s)N^1(ds)\right)
=\int_{\mathbb{R}}|\hat{\psi}(\omega)|^{2}\frac{1}{2\pi(1-\Vert h\Vert_{L^{1}})}\frac{1}{|1-\hat{h}(\omega)|^{2}}d\omega,
\end{equation*}
where $\hat{\psi}(\omega) = \int_{\mathbb{R}} e^{i\omega t} \psi(t) dt$.
Note that
\begin{equation*}
K(t)=\mbox{Var}(N^1(t))
=\mbox{Var}\left(\int_{\mathbb{R}}1_{[0,t]}(s)N^1(ds)\right),
\end{equation*}
and
\begin{equation*}
\int_{\mathbb{R}}e^{i\omega s}1_{[0,t]}(s)ds
=\int_{0}^{t}e^{i\omega s}ds
=\frac{e^{i\omega t}-1}{i\omega}.
\end{equation*}
We can compute that
\begin{eqnarray*}
\left|\frac{e^{i\omega t}-1}{i\omega}\right|^{2}
=\frac{(\cos(\omega t)-1)^{2}+\sin^{2}(\omega t)}{\omega^{2}}
=\frac{2-2\cos(\omega t)}{\omega^{2}}
=\frac{\sin^{2}(\frac{1}{2}\omega t)}{(\frac{1}{2}\omega)^{2}}.
\end{eqnarray*}
Therefore,
\begin{align}
K(t)&=\frac{1}{2\pi(1-\Vert h\Vert_{L^{1}})}
\int_{\mathbb{R}}\frac{\sin^{2}(\frac{1}{2}\omega t)}{(\frac{1}{2}\omega)^{2}}
\frac{1}{|1-\hat{h}(\omega)|^{2}}d\omega
\nonumber\\
&=\frac{t}{2\pi(1-\Vert h\Vert_{L^{1}})}
\int_{\mathbb{R}}\frac{\sin^{2}(\frac{1}{2}\omega)}{(\frac{1}{2}\omega)^{2}}
\frac{1}{|1-\hat{h}(\frac{\omega}{t})|^{2}}d\omega.
\label{eq:Kt-barlett}
\end{align}
Notice that for any $t$,
\begin{equation*}
\left|1-\hat{h}\left(\frac{\omega}{t}\right)\right|
\geq 1-\left|\hat{h}\left(\frac{\omega}{t}\right)\right|
\geq 1-\Vert h\Vert_{L^{1}}>0,
\end{equation*}
and $\int_{\mathbb{R}}\frac{\sin^{2}(\frac{1}{2}\omega)}{(\frac{1}{2}\omega)^{2}}d\omega=2\pi$.
Thus, $\frac{\sin^{2}(\frac{1}{2}\omega)}{(\frac{1}{2}\omega)^{2}}
\frac{1}{|1-\hat{h}(\frac{\omega}{t})|^{2}}
\leq\frac{\sin^{2}(\frac{1}{2}\omega)}{(\frac{1}{2}\omega)^{2}}
\frac{1}{(1-\Vert h\Vert_{L^{1}})^{2}}$, which is integrable.
On the other hand, for every $\omega$, $\lim_{t\rightarrow\infty}\hat{h}(\frac{\omega}{t})
=\hat{h}(0)=\Vert h\Vert_{L^{1}}$.
Therefore, by dominated convergence theorem, we obtain
\begin{equation*}
\lim_{t\rightarrow\infty}\frac{K(t)}{t}=\frac{1}{(1-\Vert h\Vert_{L^{1}})^{3}}.
\end{equation*}

We now prove Part (c). This requires a more delicate analysis of the Bartlett spectrum. Write $\overline z$ for the complex conjugate of a complex number $z$.
We can obtain from \eqref{eq:Kt-barlett} and $\hat{h}(0)=\Vert h\Vert_{L^{1}}$ that
\begin{align}
\lefteqn{K(t)-\frac{t}{(1-\Vert h\Vert_{L^{1}})^{3}}} \nonumber \\
&=\frac{t}{2\pi(1-\Vert h\Vert_{L^{1}})}\int_{\mathbb{R}}\frac{\sin^{2}(\frac{1}{2}\omega)}{(\frac{1}{2}\omega)^{2}}
\left[\frac{1}{|1-\hat{h}(\frac{\omega}{t})|^{2}}-\frac{1}{|1-\hat{h}(0)|^{2}}\right]d\omega
\nonumber\\
&=\frac{t}{2\pi(1-\Vert h\Vert_{L^{1}})^{3}}\int_{\mathbb{R}}\frac{\sin^{2}(\frac{1}{2}\omega)}{(\frac{1}{2}\omega)^{2}} \cdot
\frac{-2\Vert h\Vert_{L^{1}}+\Vert h\Vert_{L^{1}}^{2}
+\hat{h}(\frac{\omega}{t})+\overline{\hat{h}(\frac{\omega}{t})}
-|\hat{h}(\frac{\omega}{t})|^{2}}{|1-\hat{h}(\frac{\omega}{t})|^{2}}d\omega
\nonumber
\\
&=\frac{1}{2\pi(1-\Vert h\Vert_{L^{1}})^{3}}\int_{\mathbb{R}}\frac{\sin^{2}(\frac{1}{2}\omega t)}{(\frac{1}{2}\omega)^{2}} \cdot
\frac{-2\Vert h\Vert_{L^{1}}+\Vert h\Vert_{L^{1}}^{2}
+\hat{h}(\omega)+\overline{\hat{h}(\omega)}
-|\hat{h}(\omega)|^{2}}{|1-\hat{h}(\omega)|^{2}}d\omega
\nonumber
\\
&=\frac{1}{2\pi(1-\Vert h\Vert_{L^{1}})^{3}}\int_{\mathbb{R}}\sin^{2}\left(\frac{1}{2}\omega t\right)f(\omega)d\omega
\nonumber
\\
&=\frac{1}{4\pi(1-\Vert h\Vert_{L^{1}})^{3}}\int_{\mathbb{R}}f(\omega)d\omega
-\frac{1}{4\pi(1-\Vert h\Vert_{L^{1}})^{3}}\int_{\mathbb{R}}\cos(\omega t)f(\omega)d\omega
\label{eq:Kt-linear}
\end{align}
where
\begin{equation*}
f(\omega)=\frac{1}{(\frac{1}{2}\omega)^{2}} \cdot \frac{-2\Vert h\Vert_{L^{1}}+\Vert h\Vert_{L^{1}}^{2}
+\hat{h}(\omega)+\overline{\hat{h}(\omega)}
-|\hat{h}(\omega)|^{2}}{|1-\hat{h}(\omega)|^{2}}.
\end{equation*}

We claim that $f(\omega)\in L^{1}(\mathbb{R})$, i.e., $f$ is integrable on the real line.
To see this, notice first that
\begin{equation*}
|f(\omega)|\leq\frac{1}{(\frac{1}{2}\omega)^{2}}
\frac{4\Vert h\Vert_{L^{1}}+2\Vert h\Vert_{L^{1}}^{2}}{(1-\Vert h\Vert_{L^{1}})^{2}},
\end{equation*}
and thus $\int_{|\omega|\geq\epsilon}|f(\omega)|d\omega<\infty$
for any $\epsilon>0$.
Moreover, by L'H\^{o}pital's rule, we can check that
\begin{align*}
\lim_{\omega\rightarrow 0}f(\omega)
&=\lim_{\omega\rightarrow 0}\frac{1}{(\frac{1}{2}\omega)^{2}}
\frac{-2\Vert h\Vert_{L^{1}}+\Vert h\Vert_{L^{1}}^{2}
+\hat{h}(\omega)+\overline{\hat{h}(\omega)}
-|\hat{h}(\omega)|^{2}}{|1-\hat{h}(\omega)|^{2}}
\\
&=\frac{4}{(1-\Vert h\Vert_{L^{1}})^{2}}
\lim_{\omega\rightarrow 0}\frac{-2\Vert h\Vert_{L^{1}}+\Vert h\Vert_{L^{1}}^{2}
+\hat{h}(\omega)+\overline{\hat{h}(\omega)}
-|\hat{h}(\omega)|^{2}}{\omega^{2}}
\nonumber
\\
&=\frac{4}{(1-\Vert h\Vert_{L^{1}})^{2}}
\lim_{\omega\rightarrow 0}\frac{\hat{h}'(\omega)+\overline{\hat{h}'(\omega)}
-\hat{h}'(\omega)\overline{\hat{h}(\omega)}
-\hat{h}(\omega)\overline{\hat{h}'(\omega)}}{2\omega}.
\nonumber
\end{align*}
Since
\begin{equation*}
\hat{h}'(0)+\overline{\hat{h}'(0)}
-\hat{h}'(0)\overline{\hat{h}(0)}
-\hat{h}(0)\overline{\hat{h}'(0)}
=(1-\Vert h\Vert_{L^{1}})(\hat{h}'(0)+\overline{\hat{h}'(0)})
=0,
\end{equation*}
we apply the L'H\^{o}pital's rule again and get
\begin{align*}
\lim_{\omega\rightarrow 0}f(\omega)
&=\frac{4}{(1-\Vert h\Vert_{L^{1}})^{2}}
\lim_{\omega\rightarrow 0}\frac{\hat{h}'(\omega)+\overline{\hat{h}'(\omega)}
-\hat{h}'(\omega)\overline{\hat{h}(\omega)}
-\hat{h}(\omega)\overline{\hat{h}'(\omega)}}{2\omega}
\\
&=\frac{2}{(1-\Vert h\Vert_{L^{1}})^{2}}
\left[\hat{h}''(0)+\overline{\hat{h}''(0)}
-\hat{h}''(0)\Vert h\Vert_{L^{1}}-2\hat{h}'(0)\overline{\hat{h}'(0)}
-\overline{\hat{h}''(0)}\Vert h\Vert_{L^{1}}\right]
\nonumber
\\
&=\frac{4}{(1-\Vert h\Vert_{L^{1}})^{2}}
\left[(1-\Vert h\Vert_{L^{1}})\int_{0}^{\infty}t^{2}h(t)dt
-\left(\int_{0}^{\infty}th(t)dt\right)^{2}\right].
\nonumber
\end{align*}
Note that the first and second derivatives of $\hat{h}(\omega)$
and $\overline{\hat{h}(\omega)}$ are well defined due to the assumption
$\int_{0}^{\infty}t^{2}h(t)dt<\infty$ and are given by
\begin{align*}
&\hat{h}(\omega)=\int_{0}^{\infty}(\cos\omega t+i\sin\omega t)h(t)dt,
\\
&\overline{\hat{h}(\omega)}=\int_{0}^{\infty}(\cos\omega t-i\sin\omega t)h(t)dt,
\\
&\hat{h}'(\omega)=\int_{0}^{\infty}(-\sin\omega t+i\cos\omega t)th(t)dt,
\\
&\overline{\hat{h}'(\omega)}=\int_{0}^{\infty}(-\sin\omega t-i\cos\omega t)th(t)dt,
\\
&\hat{h}''(\omega)=\int_{0}^{\infty}(-\cos\omega t-i\sin\omega t)t^{2}h(t)dt,
\\
&\overline{\hat{h}''(\omega)}=\int_{0}^{\infty}(-\cos\omega t+i\sin\omega t)t^{2}h(t)dt.
\end{align*}
We deduce from the above that $f(\omega)\in L^{1}(\mathbb{R})$,
and then Riemann-Lebesgue theorem gives
\begin{equation*}
\lim_{t\rightarrow\infty}\int_{\mathbb{R}}e^{i\omega t}f(\omega)d\omega=0,
\end{equation*}
which implies that the limit of the real part is also zero:
\begin{equation*}
\lim_{t\rightarrow\infty}\int_{\mathbb{R}}\cos(\omega t)f(\omega)d\omega
=0.
\end{equation*}
Hence, we conclude from \eqref{eq:Kt-linear} that
\begin{align*}
&\lim_{t\rightarrow\infty}\left[K(t)-\frac{t}{(1-\Vert h\Vert_{L^{1}})^{3}}\right]
\\
&=\frac{1}{\pi(1-\Vert h\Vert_{L^{1}})^{3}}\int_{\mathbb{R}}\frac{1}{\omega^{2}}
\frac{-2\Vert h\Vert_{L^{1}}+\Vert h\Vert_{L^{1}}^{2}
+\hat{h}(\omega)+\overline{\hat{h}(\omega)}
-|\hat{h}(\omega)|^{2}}{|1-\hat{h}(\omega)|^{2}}d\omega.
\nonumber\\
&=\frac{1}{\pi(1-\Vert h\Vert_{L^{1}})^{3}}\int_{\mathbb{R}}\frac{1}{\omega^{2}}
\frac{(1-\Vert h\Vert_{L^{1}})^{2}-|1-\hat{h}(\omega)|^{2}}
{|1-\hat{h}(\omega)|^{2}}d\omega <0,
\end{align*}
where the fact that this constant is negative follows from the observation that for each $\omega,$
\begin{equation*}
|1-\hat{h}(\omega)|\geq 1-|\hat{h}(\omega)|
\geq 1-\Vert h\Vert_{L^{1}}>0.
\end{equation*}
Hence we complete the proof of Part (c).
\end{proof}

\subsection{Proof of Proposition~\ref{prop:G-property}} \label{sec:G-prop}
\begin{proof}[Proof of Proposition~\ref{prop:G-property}]
We first prove that $G$ has stationary increments. One can directly verify this fact by noting that for $\tau>0, s \ge 0,$ $G(s+ \tau)- G(s)$ is a mean zero Gaussian random variable, with variance given by
\begin{equation*}
\mbox{Var}(G(s +\tau)-G(s)) = K(s+\tau) +K(s) - 2 \mbox{Cov} (G(s+ \tau), G(s)).
\end{equation*}
Using \eqref{eq:var} and \eqref{eq:cov-G}, it is easily checked that $\mbox{Var}(G(s +\tau)-G(s))= K(\tau)= \mbox{Var}(G(\tau))$, which is independent of $s$.

We next show that the Gaussian process $G$ is not Markovian unless $h\equiv 0$.
To see this,
recall (see, e.g., Revuz and Yor \cite[p.86]{Revuz}) that a centered Gaussian process $\Upsilon$ with covariance function $\Gamma(s,t):=\mathbb{E}[\Upsilon_s \Upsilon_t]$ is Markovian if and only if
\begin{equation}\label{eq:markov-Gauss0}
\Gamma(s,u) \Gamma(t, t) = \Gamma(s,t) \Gamma(t,u),
\end{equation}
for every $0 \le s < t < u$. Given the covariance function of $G$ in \eqref{eq:cov-G}, one can directly
check that \eqref{eq:markov-Gauss0} does not hold for any nonzero exciting function $h$.

Finally, we prove that the paths of $G$ are H\"{o}lder continuous of order $\gamma$ for every $\gamma<\frac{1}{2}$. To see this, note that
$G(s+ \tau)- G(s)$ is a mean zero Gaussian random variable with variance $K(\tau)$, which implies that
for $p>0$,
\begin{equation*}
\mathbb{E}[|G(s+\tau)-G(s)|^p] = C \cdot {K(\tau)}^{p/2},
\end{equation*}
where $C=\mathbb{E}|Z|^p$ and $Z$ follows a standard normal distribution. By the Lipschitz property of $K$ in Proposition~\ref{prop:Kt}, we infer from the Kolmogorov-Chentsov theorem that the sample paths of $G$ are H\"{o}lder continuous with order less than $\frac{1}{2}$.
\end{proof}

\section{Proof of results in Section~\ref{sec:multi-hawkes}}

\subsection{Proof of Theorem~\ref{thm:multi-FCLT}}\label{sec:proof-multivariate}

\begin{proof}[Proof of Theorem~\ref{thm:multi-FCLT}]
For notational simplicity, we write {for each $t$},
\begin{equation*}
\mathbb{N} (t)=(N^{1}(t),\ldots,N^{k}(t)),
\end{equation*}
to stand for a $k-$dimensional stationary Hawkes processes where the intensity is given in \eqref{eq:intensity-multi} with $\mu=1$. It should be self-evident that here the notation $N^{i}$ stands for the $i-$th process (in Appendix A we used this notation to represent a univariate Hawkes process with baseline intensity $i$).
Let us define
\begin{equation}
\tilde{\mathbb{N}}(t):=\mathbb{N}(t)-at,
\end{equation}
and let $\tilde{\mathbb{N}}_{j}, j =1, 2, \ldots$ be independent copies of $\tilde{\mathbb{N}}$ where $\mathbb{E} [\tilde{\mathbb{N}}(t)] =0$ for each $t$.
We obtain from the immigration--birth representation of multivariate Hawkes processes that
\begin{equation}\label{eq:sum-vector}
 \mathbb{\hat N}^{(\mu)} (t) := \frac{\mathbb{N}^{(\mu)}(t)-\bar \lambda t}{\sqrt{\mu}}
=\frac{1}{\sqrt{\mu}}\sum_{j=1}^{\mu}\tilde{\mathbb{N}}_{j}(t),
\end{equation}
where as in the proof of Theorem~\ref{thm:FCLT}, it suffices to establish the weak convergence of the sequence $\left(\mathbb{\hat N}^{(\mu)} \right)$ for positive integer valued $\mu$.

We first establish the tightness of the sequence of processes $\left(\mathbb{\hat N}^{(\mu)} \right)$. We use the tightness criteria in
\cite[Chapter VI. Theorem~4.1]{Jacod2013} and verify the three conditions there. Condition (i) trivially holds. To verify Condition (ii) and (iii),
 it suffices to
check the following two conditions:
for every $0<T<\infty$, there exist some positive constants $C_{1},C_{2}$ so that
\begin{equation} \label{eq:tmp1}
\mathbb{E}\left[\Vert\tilde{\mathbb{N}}(u)-\tilde{\mathbb{N}}(s)\Vert^{2}\right]
\leq C_{1} \cdot (u-s),
\end{equation}
and
\begin{equation}\label{eq:tmp2}
\mathbb{E}\left[\Vert\tilde{\mathbb{N}}(u)-\tilde{\mathbb{N}}(t)\Vert^{2}\Vert\tilde{\mathbb{N}}(t)-\tilde{\mathbb{N}}(s)\Vert^{2}\right]
\leq C_{2} \cdot (u-s)^{2},
\end{equation}
for all $0\leq s\leq t\leq u\leq T$ with $u-s<1$, where the notation $|| \cdot ||$ stands for the usual Euclidean norm of a vector in $\mathbb{R}^k.
$ To see this, first notice that using Markov inequality, it is straightforward to verify that \eqref{eq:tmp1} implies Condition (ii) in \cite[Chapter VI. Theorem~4.1]{Jacod2013}. In addition, following the proof of Theorem~2 in \cite{Hahn} (the processes considered there are real--valued, but the argument in that proof also works for $\mathbb{R}^k$-valued processes), one can immediately deduce that \eqref{eq:tmp2} implies Condition (iii) in \cite[Chapter VI. Theorem~4.1]{Jacod2013}.

%
%
%
%
%

We now prove \eqref{eq:tmp1} and \eqref{eq:tmp2}. As the dimension $k$ of the multivariate Hawkes process $\mathbb{N}$ is finite, in order to prove \eqref{eq:tmp1} and \eqref{eq:tmp2},
it suffices to check that
for every $0<T<\infty$, there exist some positive constants $C_{1},C_{2}$ so that for all $0\leq s\leq t\leq u\leq T$ with $u-s<1$
and every $1\leq i,j\leq k$,
\begin{equation} \label{eq:multi1}
\mathbb{E}\left[(N^{i}(s,u])^{2}\right]
\leq C_{1}\cdot (u-s),
\end{equation}
and
\begin{equation}\label{eq:multi2}
\mathbb{E}\left[(N^{i}(t,u])^{2}(N^{j}(s,t])^{2}\right]
\leq C_{2} \cdot (u-s)^{2}.
\end{equation}

We next prove \eqref{eq:multi1} and \eqref{eq:multi2}.
Similar as \eqref{C1estimate}, we can compute that
\begin{align*}
\mathbb{E}\left[(N^{i}(s,u])^{2}\right]
&\leq 2\mathbb{E}\left[\left(N^{i}(s,u]-\int_{s}^{u}\lambda^{i}(v)dv\right)^{2}\right]
+2\mathbb{E}\left[\left(\int_{s}^{u}\lambda^{i}(v)dv\right)^{2}\right]
\\
&=2a_{i}(u-s)
+2\mathbb{E}\left[\left(\int_{s}^{u}\lambda^{i}(v)dv\right)^{2}\right]
\\
&\leq
2a_{i}(u-s)
+2(u-s)\mathbb{E}\left[\left(\int_{s}^{u}(\lambda^{i}(v))^{2}dv\right)\right]
\\
&=2a_{i}(u-s)
+2(u-s)^{2}\mathbb{E}[(\lambda^{i}(0))^{2}]\leq C_{1}(u-s),
\end{align*}
for some positive constant $C_{1}$, \textit{provided that $\mathbb{E}[(\lambda^{i}(0))^{2}]<\infty$.}

Moreover, similar as the derivations in \eqref{eq:1}, \eqref{eq:221} and \eqref{22}, we have
\begin{align*}
&\mathbb{E}\left[(N^{i}(t,u])^{2}(N^{j}(s,t])^{2}\right]
\\
&\leq
2\mathbb{E}\left[\left(N^{i}(t,u]-\int_{t}^{u}\lambda^{i}(v)dv\right)^{2}(N^{j}(s,t])^{2}\right]
+2\mathbb{E}\left[\left(\int_{t}^{u}\lambda^{i}(v)dv\right)^{2}(N^{j}(s,t])^{2}\right]
\\
&=
2\mathbb{E}\left[\int_{t}^{u}\lambda^{i}(v)dv\cdot (N^{j}(s,t])^{2}\right]
+2\mathbb{E}\left[\left(\int_{t}^{u}\lambda^{i}(v)dv\right)^{2}(N^{j}(s,t])^{2}\right]
\\
&\leq
2\left(\mathbb{E}\left[\left(\int_{t}^{u}\lambda^{i}(v)dv\right)^{2}\right]\right)^{1/2}
\left(\mathbb{E}\left[(N^{j}(s,t])^{4}\right]\right)^{1/2}
+2(u-t)\mathbb{E}\left[\left(\int_{t}^{u}(\lambda^{i}(v))^{2}dv\right)(N^{j}(s,t])^{2}\right]
\\
&\leq
2(u-t)^{1/2}\left(\int_{t}^{u}\mathbb{E}(\lambda^{i}(v))^{2}dv\right)^{1/2}
\mathbb{E}\left[(N^{j}(s,t])^{4}\right]^{1/2}
+(u-t)\mathbb{E}\left[\int_{t}^{u}\left((\lambda^{i}(v))^{4}+(N^{j}(s,t])^{2}\right)dv\right]
\\
&=
2(u-t)\left(\mathbb{E}\left[(\lambda^{i}(0))^{2}\right]\right)^{1/2}
\left(\mathbb{E}\left[(N^{j}(0,t-s])^{4}\right]\right)^{1/2}
+(u-t)^{2}\left(\mathbb{E}\left[(\lambda^{i}(0))^{4}\right]+\mathbb{E}\left[(N^{j}(0,t-s])^{2}\right]\right)
\\
&\leq
2(u-t)\left(\mathbb{E}\left[(\lambda^{i}(0))^{2}\right]\right)^{1/2}
\left(\mathbb{E}\left[(N^{j}(0,t-s])^{4}\right]\right)^{1/2}
+(u-t)^{2}\left(\mathbb{E}\left[(\lambda^{i}(0))^{4}\right]+C_{1}(t-s)\right).
\end{align*}
Similar as \eqref{eq:2} in the proof of Theorem \ref{thm:FCLT}, we have
\begin{equation*}
\mathbb{E}\left[(N^{j}(0,t-s])^{4}\right]
\leq
8\bar{C}(t-s)^{2}\mathbb{E}[(\lambda^{j}(0))^{2}]
+8(t-s)^{4}\mathbb{E}[(\lambda^{j}(0))^{4}].
\end{equation*}
Hence, we obtain
\begin{equation*}
\mathbb{E}\left[(N^{i}(t,u])^{2}(N^{j}(s,t])^{2}\right]\leq C_{2}(u-s)^{2},
\end{equation*}
for some positive constant $C_{2}$,
\textit{provided that} $\mathbb{E}[(\lambda^{j}(0))^{4}]<\infty$ for $1 \le j \le k$.

It remains to prove that $\mathbb{E}[(\lambda^{j}(0))^{4}]<\infty$ for $1 \le j \le k$, as it implies $\mathbb{E}[(\lambda^{j}(0))^{2}]<\infty$.
Similar as the derivations in \eqref{eq:tempstep}--\eqref{eq:3} in the proof of Theorem \ref{thm:FCLT},
since $h_{ij}$ are locally bounded and Riemann integrable by Assumption~1,
for sufficiently small $\delta>0$, we obtain
\begin{align*}
\mathbb{E}[(\lambda^{j}(0))^{4}]
&=\mathbb{E}\left[\left(p_{j}+\sum_{\ell=1}^{k}\int_{-\infty}^{0-}h_{j\ell}(-s)N^{\ell}(ds)\right)^{4}\right]
\\
&\leq C \left[(p_{j})^{4}
+\sum_{\ell=1}^{k}\mathbb{E}\left[\left(\int_{-\infty}^{0-}h_{j\ell}(-s)N^{\ell}(ds)\right)^{4}\right]\right]
\\
&\leq
C \left[(p_{j})^{4}
+\sum_{\ell=1}^{k}\left(\sum_{i=0}^{\infty}\max_{t\in[-(i+1)\delta,-i\delta]}h_{j\ell}(t)\right)^{4}
\mathbb{E}\left[\left(N^{\ell}(0,\delta]\right)^{4}\right]\right],
\end{align*}
for some positive constant $C.$
Thus, it remains to show that for every $1\leq\ell\leq k$, $\mathbb{E}\left[\left(N^{\ell}(0,1]\right)^{4}\right]<\infty$.
It suffices to show that there exists some constant $c_{\ell}>0$
so that $\mathbb{E}[e^{c_{\ell}N^{\ell}(0,1]}]<\infty$.
Let us define $\mathbb{E}^{\emptyset}$ as the expectation under
which the process $\mathbb{N}=(N^{1},\ldots,N^{k})$ (with slight abuse of notations) is a multivariate Hawkes process starting
from empty history, that is, $\lambda^{i}(t)=p_{i}+\sum_{j=1}^{k}\int_{0}^{t-}h_{ij}(t-s)N^{j}(ds)$.
For any $\psi_{i}>0$,
$e^{\sum_{i=1}^{k}\psi_{i}N^{i}(t)-\sum_{i=1}^{k}(e^{\psi_{i}}-1)\int_{0}^{t}\lambda^{i}(s)ds}$
is a martingale (see e.g. \cite{SH}), and thus
\begin{align*}
1&=\mathbb{E}^{\emptyset}\left[e^{\sum_{i=1}^{k}\psi_{i}N^{i}(t)-\sum_{i=1}^{k}(e^{\psi_{i}}-1)\int_{0}^{t}\lambda^{i}(s)ds}\right]
\\
&=\mathbb{E}^{\emptyset}\left[e^{\sum_{i=1}^{k}\psi_{i}N^{i}(t)
-\sum_{i=1}^{k} (e^{\psi_{i}}-1) \int_{0}^{t}(p_{i}+\int_{0}^{s}\sum_{j=1}^{k}h_{ij}(s-u)N^{j}(du))ds}
\right]
\\
&\geq
\mathbb{E}^{\emptyset}\left[e^{\sum_{i=1}^{k}\psi_{i}N^{i}(t)-\sum_{i=1}^{k}(e^{\psi_{i}}-1)(p_{i}t+\sum_{j=1}^{k}\Vert h_{ij}\Vert_{L^{1}}N^{j}(t))}\right],
\end{align*}
which implies that
\begin{equation}\label{eq:tempo1}
\mathbb{E}^{\emptyset}\left[e^{\sum_{i=1}^{k}(\psi_{i}-\sum_{j=1}^{k}(e^{\psi_{j}}-1)\Vert h_{ji}\Vert_{L^{1}})N^{i}(t)}\right]
\leq e^{\sum_{i=1}^{k}(e^{\psi_{i}}-1)p_{i}t}.
\end{equation}
Since the spectral radius of the matrix $\mathbb{H}= (\Vert h_{ij}\Vert_{L^{1}})_{1 \le i,j \le k}$ is strictly less than $1$,
we know that $(\mathbb{I}-\mathbb{H})^{-1}$ exists and
$(\mathbb{I}-\mathbb{H})^{-1}=\sum_{n=0}^{\infty}\mathbb{H}^{n}$.
Thus for any fixed positive column vector $m=(m_{1},\ldots,m_{k})^{t}\in\mathbb{R}_{>0}^{k}$,
we have $((\mathbb{I}-\mathbb{H})^{-1}m)_{i}>0$ for every $i$,
where $((\mathbb{I}-\mathbb{H})^{-1}m)_{i}$ is the $i$-th component of the vector
$(\mathbb{I}-\mathbb{H})^{-1}m$.
Let $\psi_{i}=\epsilon((\mathbb{I}-\mathbb{H})^{-1}m)_{i}$,
where $\epsilon>0$ is sufficiently small so that
we can find some constant $C(m)$ that depends on $m$ such that
\begin{equation*}
\psi_{i}-\sum_{j=1}^{k}(e^{\psi_{j}}-1)\mathbb{H}_{ji}
\geq\psi_{i}-\sum_{j=1}^{k}\psi_{j}\mathbb{H}_{ji}-C(m)\epsilon^{2}
=\epsilon m_{i}-C(m)\epsilon^{2}>0.
\end{equation*}
Hence, we deduce from \eqref{eq:tempo1} that there exists $c_{i}=\epsilon m_{i}-C(m)\epsilon^{2}>0$ so that
\begin{equation*}
\mathbb{E}^{\emptyset}\left[e^{\sum_{i=1}^{k}c_{i}N^{i}(t)}\right]
\leq e^{t\sum_{i=1}^{k}p_{i}(e^{\epsilon((\mathbb{I}-\mathbb{H})^{-1}m)_{i}}-1)}.
\end{equation*}
In particular, for every $1\leq\ell\leq k$, we have
\begin{equation*}
\mathbb{E}^{\emptyset}\left[e^{c_{\ell}N^{\ell}(t)}\right]
\leq e^{t\sum_{i=1}^{k}p_{i}(e^{\epsilon((\mathbb{I}-\mathbb{H})^{-1}m)_{i}}-1)}.
\end{equation*}
Since the linear Hawkes process (either with empty history or the stationary version) is associated,
we then deduce that for positive integer $t$,
\begin{equation*}
\prod_{n=1}^{t}\mathbb{E}^{\emptyset}\left[e^{c_{\ell}N^{\ell}(n-1,n]}\right]
\leq\mathbb{E}^{\emptyset}\left[e^{c_{\ell}N^{\ell}(t)}\right]
\leq e^{t\sum_{i=1}^{k}p_{i}(e^{\epsilon((\mathbb{I}-\mathbb{H})^{-1}m)_{i}}-1)}.
\end{equation*}
Hence, by the ergodicity of the Hawkes processes with empty history where the exciting function satisfies Assumption~1, see e.g. \cite{Bremaud}, we obtain
\begin{equation*}
\log\mathbb{E}\left[e^{c_{\ell}N^{\ell}(0,1)}\right]
=\lim_{t\rightarrow\infty}\frac{1}{t}\sum_{n=1}^{t}\log\mathbb{E}^{\emptyset}\left[e^{c_{\ell}N^{\ell}(n-1,n]}\right]
\leq\sum_{i=1}^{k}p_{i}(e^{\epsilon((I-\mathbb{H}^{t})^{-1}m)_{i}}-1)<\infty,
\end{equation*}
where we recall $\mathbb{E}$ is the expectation under which the Hawkes process is stationary.
Hence, we have proved that there exists some constant $c_{\ell}>0$
so that $\mathbb{E}[e^{c_{\ell}N^{\ell}(0,1]}]<\infty$ for each $1 \le l \le k$.

Now we have proved the tightness of the sequence $\left(\mathbb{\hat N}^{(\mu)} \right)$, we next
 show that the finite dimensional distributions of the sequence of processes $\left(\mathbb{\hat N}^{(\mu)} \right)$ converges in distribution to that of the limiting process $\mathbb{G}$ as $\mu \rightarrow \infty$. To this end, we note that one can readily compute the covariance function of $\tilde{\mathbb{N}}$ as in the univariate case (see Equations. \eqref{CovCompute1}-\eqref{CovCompute4}), and find that
for $t\geq s$,
\begin{equation*}
\text{Cov}(\tilde{\mathbb{N}} (t),\tilde{\mathbb{N}}(s))
=\int_{s}^{t}\int_{0}^{s}\Phi(u-v)dudv+\mathbb{K}(s) = \text{Cov}(\mathbb{G}(t),\mathbb{G}(s)).
\end{equation*}
Hence, in view of \eqref{eq:sum-vector} and \eqref{eq:tmp1}, the weak convergence of the finite dimensional distributions of this sequence $\left(\mathbb{\hat N}^{(\mu)} \right)$ immediately follows from the central limit theorem for sum of i.i.d. random vectors and the Cram\'{e}r-Wold device (see e.g. Section~4.3.2 in \cite{whitt2002}).

Finally, as the covariance of $\mathbb{G}$ is the same as that of $\tilde{\mathbb{N}}$, we immediately infer from \eqref{eq:tmp1} that for $s<u,$
\begin{equation*}
\mathbb{E}\left[\Vert \mathbb{G}(u)-\mathbb{G}(s)\Vert^{2}\right]
\leq C_{1} \cdot (u-s),
\end{equation*}
which implies that the limiting Gaussian process $\mathbb{G}$ has continuous sample paths (\cite[p.37]{Revuz}).
The proof is therefore complete.
\end{proof}

\subsection{Proof of Proposition~\ref{thm:IS-multi-gen}}\label{sec:proof-13}

\begin{proof}[Proof of Proposition~\ref{thm:IS-multi-gen}]
To prove Proposition~\ref{thm:IS-multi-gen}, we rely on \cite{Kri1997}. For notational simplicity, we prove the joint weak convergence of $(\mathbb{X}_1^{\mu}, \ldots, \mathbb{X}_k^{\mu})$ as $\mu \rightarrow \infty$ for the case $k=2.$ The general case $k \ge 2$ follows similarly.

First, as in the proof of Theorem 3 in \cite{Kri1997}, we obtain that for $i=1, 2$,
\begin{eqnarray*}
\mathbb{X}_i^{\mu}(t) &=&{\sqrt{\mu}} \left( \frac{\mathbb{Q}_i^{\mu}(t)}{\mu} - q_{i0} (1-F_{i0}(t)) - a_i \cdot \int_{0}^t (1-F_i(t-u)) du \right), \\
&=& \mu^{-1/2} \sum_{j=1}^{\mathbb{Q}^{\mu}_i(0)} (1_{\bar \eta_{i,j} >t} - (1- F_{i0}(t)))  + (1 - F_{i0}(t)) \mu^{1/2}(\mu^{-1}\mathbb{Q}^{\mu}_i(0) - q_{i0}) \\
&&+[M^{\mu}_{i1}(t) -M^{\mu}_{i2}(t)],
\end{eqnarray*}
where for $t \ge 0,$
\begin{align*}
M^{\mu}_{i1}(t) &= \int_{0}^t (1 - F_i(t-s)) d \left[ \frac{\mathbb{N}_i^{(\mu)}(s)-\bar \lambda_i s}{\sqrt{\mu}} \right],\\
M^{\mu}_{i2}(t) &= \int_{0}^t \int_{0}^t 1_{s+x \le t} d U_i^\mu \left( \frac{\mathbb{N}_i^{(\mu)}(s)}{{\mu}} , F_i(x)\right),\\
U_i^\mu (t, x)  &=\mu^{-1/2} \sum_{j=1}^{\lfloor \mu t \rfloor} (1_{\zeta_{ij} \le x} -x).
\end{align*}
Here $\zeta_{ij}$ are all independent and uniformly distributed random variables on $[0, 1]$ and the service times $\eta_{ij}=F_i^{-1}(\zeta_{ij})$, where
$F_i^{-1}(x):= \inf\{y: F_i(y) \ge x \}$.
For each fixed $i$, it was proved in \cite{Kri1997} (see (6.1)--(6.3) there) that the following weak convergence of processes hold:
\begin{eqnarray}
&\mu^{-1/2} \sum_{j=1}^{\mathbb{Q}^{\mu}_i(0)} (1_{\bar \eta_{i,j} >t} - (1- F_{i0}(t)))  \Rightarrow \sqrt{q_{i0}} W^{i0}(F_{i0}(t)), \label{eq:cov1}\\
&(1 - F_{i0}(t)) \mu^{1/2}(\mu^{-1}\mathbb{Q}^{\mu}_i(0) - q_{i0})   \Rightarrow (1 - F_{i0}(t)) \xi_i, \label{eq:cov2} \\
& \left(M^{\mu}_{i1}(t), M^{\mu}_{i2}(t) \right)  \Rightarrow  \left( \int_{0}^t  (1- F_i(t-u)) d\mathbb{G}_i(u), \int_0^t \int_0^t  1_{s +x \le t} dU_i \left(a_i s, F_i(x) \right)  \right). \label{eq:cov3}
\end{eqnarray}
In addition, there is clearly a joint weak convergence of the left-hand sides of \eqref{eq:cov1}--\eqref{eq:cov3} to the right-hand sides \cite{Kri1997}. Now, by the hypothesis, the number of customers in the system at time zero, $\mathbb{Q}^{\mu}_i(0)$ for $i=1, 2$, as well as their respective service requirements $\bar \eta_{ij}$ for $i=1, 2,$ are mutually independent. Moreover, the arrivals of new customers and the service requirements of those new customers are independent of the initial number of customers $\mathbb{Q}^{\mu}(0)$ and their service times.
Hence, in order to prove the joint weak convergence of $(\mathbb{X}_1^{\mu}, \mathbb{X}_2^{\mu})$, it suffices to prove the weak convergence of $(M^{\mu}_{11}, M^{\mu}_{12}, M^{\mu}_{21}, M^{\mu}_{22})$ as $\mu \rightarrow \infty.$

To this end, let us define
\begin{align*} \label{eq:M-mu-2}
\tilde{M}^{\mu}_{i2} = \int_{0}^t \int_{0}^t 1_{s+x \le t} d U_i^\mu \left( a_i s, F_i(x)\right), \quad \text{for $i=1, 2$. }
\end{align*}
Note that by Theorem~\ref{thm:multi-FCLT}, we have the sequence of processes $\left(\frac{\mathbb{N}^{(\mu)}}{\mu}\right)$ converges in distribution to a deterministic limit process $\omega$ where $\omega(t): =at$ for each $t \ge 0.$ As $\omega$ has continuous paths and the Skorohod $J_1$ topology relativized to the space of continuous functions coincides with the uniform topology there (\cite[p.124]{Billingsley}), we obtain that
for each $T>0$, as $\mu \rightarrow \infty$,
\begin{equation*}
\sup_{0 \le t \le T} ||{\mathbb{N}^{(\mu)}(t)}/{\mu} - at ||  \rightarrow 0 \quad \text{in probability}.
\end{equation*}
Then using a similar argument as in the proof of Lemma~5.3 in \cite{Kri1997}, we can establish that for each $T>0$ and $\epsilon>0$,
\begin{equation} \label{eq:convg-p}
\lim_{\mu \rightarrow \infty} P\left( \sup_{t \le T} |\tilde{M}^{\mu}_{i2}(t) - {M}^{\mu}_{i2} (t)|> \epsilon\right) =0, \quad \text{for $i=1, 2$. }
\end{equation}

In addition, using integration by parts, we can write $(M^{\mu}_{11}, M^{\mu}_{21}) = \left( g_1( \mathbb{\hat N}_1^{(\mu)} ), g_2 (\mathbb{ \hat N}_2^{(\mu)} ) \right) $, where $ \mathbb{\hat N}_i^{(\mu)} (s) := \frac{\mathbb{N}_i^{(\mu)}(s)-\bar \lambda_i s}{\sqrt{\mu}}$ for each $s \ge 0$,
$g_i: D([0,\infty),\mathbb{R}) \rightarrow D([0,\infty),\mathbb{R}),$ is defined by
\begin{equation*}
g_i(x(\cdot)) (t) = x(t) - \int_{0}^t x(t-s) dF_i(s), \quad \text{for $i=1, 2,$}
\end{equation*}
and $g_i$ is continuous at points $x(\cdot) \in C([0,\infty),\mathbb{R})$.
See the proof of Lemma~3.3 in \cite{Kri1997}. Now in Theorem~\ref{thm:multi-FCLT} we have established that $\left(\mathbb{\hat N}_1^{(\mu)}, \mathbb{\hat N}_2^{(\mu)} \right)$ converges in distribution to the Gaussian process $(\mathbb{G}_1, \mathbb{G}_2)$ under the Skorohod $J_1$ topology where the limiting Gaussian process has continuous paths, it then immediately follows that
\begin{equation}\label{eq:convg1}
(M^{\mu}_{11}, M^{\mu}_{21}) \Rightarrow (M_{11}, M_{21}), \quad \text{as $\mu \rightarrow \infty$,}
\end{equation}
where
\begin{equation*}
M_{i1}  (t) = \int_{0}^t (1 - F_i(t-s)) d \mathbb{G}_i(s), \quad \text{for $i=1, 2$. }
\end{equation*}


Furthermore, as the service processes of each class $i$ customers are independent, we deduce that the two processes $U_1^\mu$ and $U_2^\mu$ are independent for each $\mu$, which further implies that $\tilde M^{\mu}_{12}$ and $\tilde M^{\mu}_{22}$ are two independent processes.
By Lemma~3.1 of \cite{Kri1997}, we have $U_i^\mu \Rightarrow U_i$ in $D([0,\infty),D[0,1])$ for each $i$ as $\mu \rightarrow \infty$. Hence, we deduce from Lemma~5.3 of \cite{Kri1997} that
\begin{equation} \label{eq:convg2}
(\tilde M^{\mu}_{12}, \tilde M^{\mu}_{22}) \Rightarrow (M_{12}, M_{22}), \quad \text{as $\mu \rightarrow \infty$,}
\end{equation}
where
\begin{equation*}
M_{i2}(t) = \int_{0}^t \int_{0}^t 1_{s+x \le t} d U_i \left( a_i t, F_i(x)\right), \quad \text{for $i=1, 2$. }
\end{equation*}
Then we can obtain from \eqref{eq:convg1}, \eqref{eq:convg2} and the independence of the service processes and the arrival processes of each class of customers that
\begin{equation*}
(M^{\mu}_{11}, \tilde M^{\mu}_{12}, M^{\mu}_{21}, \tilde M^{\mu}_{22}) \Rightarrow (M_{11}, M_{12}, M_{21}, M_{22}), \quad \text{as $\mu \rightarrow \infty$.}
\end{equation*}
Together with \eqref{eq:convg-p} which implies that $\left({M}^{\mu}_{12} - \tilde{M}^{\mu}_{12}, {M}^{\mu}_{22} - \tilde{M}^{\mu}_{22}  \right) \Rightarrow (0, 0)$, we infer that the process $(M^{\mu}_{11}, M^{\mu}_{12}, M^{\mu}_{21}, M^{\mu}_{22})$ converges in distribution to $(M_{11}, M_{12}, M_{21}, M_{22})$ as $\mu \rightarrow \infty$. Therefore, we obtain the weak convergence of $(\mathbb{X}_1^{\mu}, \mathbb{X}_2^{\mu})$ to the desired limit process $(\mathbb{X}_1, \mathbb{X}_2)$. The proof is completed.
\end{proof}

\newpage

\end{document}